\documentclass[a4paper,12pt]{amsart}
\usepackage{amssymb}
\usepackage{amscd}
\usepackage{todonotes}
\usepackage{hyperref}
\usepackage{tikz}
\usetikzlibrary{decorations.markings}
\usetikzlibrary{backgrounds,shapes}
\tikzstyle{block}=[ellipse, fill=blue, opacity=0.3]
\tikzstyle{morphism}=[ellipse, fill=green!20!white, draw=black, inner sep=0pt, minimum height=16pt, minimum width=16pt]
\tikzstyle{Xsize}=[minimum size=1.3cm]
\usepackage[backend=bibtex]{biblatex}
\addbibresource{univ.bib}

\newcommand{\bN}{\mathbb{N}}
\newcommand{\bZ}{\mathbb{Z}}
\newcommand{\bQ}{\mathbb{Q}}

\newcommand{\fS}{\mathfrak{S}}
\newcommand{\fg}{L}
\newcommand{\fso}{\mathfrak{so}}
\newcommand{\fsp}{\mathfrak{sp}}
\newcommand{\fsl}{\mathfrak{sl}}
\newcommand{\Hom}{\mathrm{Hom}}

\newcommand{\cG}{\mathcal{G}}

\newcommand{\End}{\mathrm{End}}

\newcommand{\cS}{\mathcal{S}}

\newcommand{\cD}{\mathcal{D}}
\newcommand{\wD}{\widetilde{\cD}}

\newcommand{\sC}{\mathsf{C}}
\newcommand{\ft}{\mathfrak{t}}
\DeclareMathOperator{\id}{id}

\newtheorem{thm}{Theorem}[section]

\newtheorem{lemma}[thm]{Lemma}
\newtheorem{prop}[thm]{Proposition}

\theoremstyle{definition}
\newtheorem{defn}[thm]{Definition}
\newtheorem{ex}[thm]{Example}

\tikzset{
	vector/.style={draw=red,line width=2pt},
	adjoint/.style={thick, draw=green},
}

\title{Extending and quantising the Vogel plane}
\author{Bruce Westbury}
\date{August 2015}
\keywords{Vassiliev invariants, quantum group}
\subjclass{17B20,17B37}

\begin{document}

\begin{abstract} After introducing the Vogel plane we give the quantisation. Then we extend the Vogel plane to include certain symmetric spaces and give the quantisation of this extension.
\end{abstract}

\maketitle
\tableofcontents

\section{Introduction} The aim of this article is to construct and quantise series of metric Lie algebras. The naive idea of a series of Lie algebras is that it is a family of Lie algebras which depends algebraically on a parameter. The only example of a series of simple Lie algebras in this sense
is the series of simple super Lie algebras $D(2,1;\alpha)$ which depend on the parameter $\alpha$.
A more productive approach is to consider families of categories with the formal properties of a category of representations. The definition of a PROP is given in \cite[\S24. Proper categories]{MR0171826} and for our purposes a PROP is a rigid symmetric linear monoidal category whose monoid of objects is $\bN$. Usually, but not always, a PROP will be finitely presented as a linear monoidal category. Then a more general definition of a series of Lie algebras is that it is a PROP such that the object $[1]$ is a (metric) Lie algebra. In this case we consider the object $[n]$ as the $n$-th tensor power of the adjoint representation. A still more general definition is that there is an idempotent $e\in\End([n])$, for some $n$, such that if we formally adjoin $e$ as a new object then $e$ is a (metric) Lie algebra.

The standard construction of the quantum group associated to a simple Lie algebra starts with the universal enveloping algebra and deforms it as a Hopf algebra. There are two ways to approach the deformation; one is the perturbative approach due to Drinfel$'$d, \cite{MR934283}, and the other is the non-perturbative approach due to Jimbo, \cite{MR797001}. The perturbative approach is
based on deformation quantisation and the non-perturbative approach
defines the Hopf algebra by a finite presentation which is a $q$-analogue of the Serre presentation. These constructions do not generalise to series of Lie algebras as they each depend on a choice of additional structure, such as a Cartan subalgebra; and this structure is not available in our more abstract setting. Instead we quantise the category of representations. The category of finite dimensional representations of a simple Lie algebra is a rigid symmetric monoidal category and has an infinitesimal braiding. The quantisation is then a ribbon category.
We give a more detailed account in \S~\ref{sec:defq}.

In this paper we give examples of non-perturbative quantisations
of series of Lie algebras. This entails giving a $q$-analogue of the finite presentation.
The most well-known examples arise from the classical Lie algebras and we give these examples in \S~\ref{sec:class}. Further examples arise from the
rank two simple Lie algebras and are given in \cite{MR1403861}.

Our interest is in universal examples. The first universal example
is the PROP for metric Lie algebras. The non-perturbative approach is not available, in fact, it is not clear how to formulate this as a problem.
The perturbative approach has been studied and is essentially the theory of Vassiliev (aka. finite type) invariants, \cite{MR1318886}, \cite{MR1913297}. The example that is central to this paper is the universal Lie algebra constructed by Vogel,
\cite{univ}. Heuristically, this is a universal simple Lie algebra. It was introduced in order to show that there are Vassiliev invariants which do not arise from simple Lie superalgebras, \cite[Theorem 8.2]{MR2769234}.

The problem that then arises is to study the quantisation of the universal simple Lie algebra. This problem appears to be inaccessible. In this paper we study a related problem. We substitute a finitely presented category for the universal simple Lie algebra which we call the Vogel plane. Then we quantise by giving a $q$-analogue of the finite presentation.

The quantum dimensions were first given in \cite{MR1815266} and \cite{MR2029689}.
It is remarkable that the first quantum dimension formula also appears independently in \cite{vol} with a different interpretation.

The Vogel plane is a method of parametrising simple Lie algebras by an unordered triple $(\alpha,\beta,\gamma)$, up to scale. Let $\fg$ be the adjoint representation of a simple Lie algebra. Consider the representation $L\otimes L$ and its decomposition into the symmetric square and the exterior square. Then the exterior square contains $L$, using the Lie bracket, and the symmetric square contains the trivial representation using the Killing form. Then the basic observation is that the complement of $L$ in the exterior square is irreducible and the complement of the trivial representation in the symmetric square decomposes into the sum of three irreducible representations (one of which is zero for the exceptional Lie algebras). The Casimir acts as a scalar on each irreducible representation. The Vogel parameters are then determined by the property that these three scalars are $(2t-\alpha,2t-\beta,2t-\gamma)$ where $t=\alpha+\beta+\gamma$.
The parameters $\alpha, \beta, \gamma$ depend on the choice of Casimir and thus are defined up to common multiple and up to permutation. Their values are given in Table~\ref{table:vogel}, where we have chosen the normalisation with $\alpha=-2$ corresponding to the so-called minimal bilinear form, when the square of the length of the maximal root equals 2 and $t=h^\vee$ is the dual Coxeter number.
\begin{table}[h]    
	\begin{tabular}{|c|c|c|c|c|}
		\hline
		Lie algebra    & $\alpha$ & $\beta$ & $\gamma$  & $t=h^\vee$\\   
		\hline    
		$SU(n)$      & $-2$ & 2 & $n $ & $n$\\
		$Spin(n)$    & $-2$ & 4& $n-4 $ & $n-2$\\
		$Sp(2n)$     & $-2$ & 1 & $n+2 $ & $n+1$\\
		$G_{2}  $    & $-2$ & $10/3 $& $8/3$ & $4$ \\
		$F_{4}  $    & $-2$ & $ 5$& $ 6$ & $9$\\
		$E_{6}  $    & $-2$ & $ 6$& $ 8$ & $12$\\
		$E_{7}  $    & $-2$ & $ 8$& $ 12$ & $18$ \\
		$E_{8}  $    & $-2$ & $ 12$& $20$ & $30$\\
		\hline  
	\end{tabular}	
	\caption{Vogel's parameters for simple Lie algebras}\label{table:vogel}
\end{table}
Then there are several formulae which can be written in terms of the Vogel parameters.
Chern-Simons theory is studied from this point of view in
\cite{MR3118578}, \cite{MR3006906}, \cite{MR3050562}.
The volume of a compact group is written in terms of the Vogel
parameters in \cite{vol}. The $R$-matrices in \cite{MR1103907} are written
in terms of the Vogel parameters in \cite{MR1960703}.

Then we show that the finite presentation of the Vogel plane can be modified to include an additional parameter by introducing a second type of edge. We call this the extended Vogel plane. We then quantise by giving a $q$-analogue of the finite presentation.

The problem which motivated the extended Vogel plane was to find an extension of Vogel's parametrisation to irreducible affine Kac-Moody algebras, excluding $D_4^{(3)})$, where each simple Lie algebra is taken to correspond to its untwisted affine Kac-Moody algebra. This then led to a problem on symmetric spaces. Associated to each symmetric space is a graded Lie algebra $L\oplus V$. The exterior square of $V$ contains $L$, by the Lie bracket and the symmetric square of $V$ contains the trivial representation. Then we require that the complement of $L$ in the exterior square of $V$ is irreducible and that the complement of the trivial representation in the symmetric square of $V$ decomposes into the sum of (at most) three irreducible components. This includes simple Lie algebras by taking $L\cong V$. The symmetric spaces with $L\ncong V$ which satisfy this condition are given in \S~\ref{sec:ex}. These can be parametrised using the same method as for the simple Lie algebras but this now requires an additional parameter, $\tau$.
For a simple Lie algebra $\tau=t$ and the parameters in the other cases are given in Table~\ref{table:symm}.
\begin{table}[h]    
	\begin{tabular}{|c|c|c|c|c|c|}
		\hline
		type    & $\tau$ & $\alpha$ & $\beta$ & $\gamma$  & $t$\\   
		\hline 
		AI & $-n+2$ & $-2$ & $4$ & $-n

		$ & $-n+2$ \\
		AII & $n-2$ & $-2$ & $4$ & $n$ & $n+2$ \\
		BDI & $2$ & $-2$ & $\beta$ & $2n-\beta-2$ & $2n-4$ \\ 
		FII & $4$ & $-2$ & $6$ & $10$ & $14$ \\
		EIV & $6$ & $-2$ & $8$ & $12$ & $18$ \\
		EI & $7$ & $-2$ & $3$ & $4$ & $5$ \\
		EV & $10$ & $-2$ & $4$ & $6$ & $8$ \\
		EVIII & $16$ & $-2$ & $6$ & $10$ & $14$ \\
		\hline  
	\end{tabular}
	
	\caption{Parameters for symmetric spaces}\label{table:symm}
\end{table}

The paper is organised as follows. Section~\ref{sec:mon} gives background on monoidal
categories and the diagrammatic notation.
Section~\ref{sec:vogel} gives the construction
of the universal simple Lie algebra. This is 
included for the convenience of the reader.
Section~\ref{sec:plane} gives the presentation of the Vogel plane and section~\ref{sec:plane} gives the $q$-analogue. Section~\ref{sec:planex} gives the presentation of the extended Vogel plane. The examples which inspired this construction are given in section~\ref{sec:ex}. Finally, section~\ref{sec:plane} gives the $q$-analogue of the extended Vogel plane.

\subsection*{Acknowledgement} I would like to thank Dmitriy Rumynin for his patience
and support over many years.

\section{Monoidal categories}\label{sec:mon}
In this section we give a concise account of symmetric monoidal, braided monoidal and
pivotal categories.  Here we only discuss the strict versions of
these concepts; this is for brevity and simplicity and is justified
since the examples constructed from diagrams are strict. The general definition, which involves an associator, is only needed in Section~\ref{sec:defq}.

Monoidal and symmetric monoidal categories were introduced in \cite{MR0170925}
and a coherence theorem is given in \cite{MR593254}. A more recent reference
is \cite{MR1268782}. In this paper we use the diagram calculus for
monoidal and symmetric monoidal categories described in \cite[Sections 1,2]{MR1113284}.
The papers which pioneered this use of diagrams are
\cite{MR0127765}, \cite[Appendix]{MR776784}, \cite{MR2418111}.

\subsection{Monoidal categories}
\begin{defn}
	A \emph{strict monoidal category} consists of the following data:
	\begin{itemize}
		\item a category $\sC$,
		\item a functor $\otimes \colon \sC \times \sC\rightarrow\sC$,
		\item an object $I\in\sC$.
	\end{itemize}
	The functor $\otimes$ is required to be associative. This is the
	condition that
	\begin{equation*}
	\otimes \circ (\otimes \times\id) = \otimes \circ (\id \times \otimes)
	\end{equation*}
	as functors $\sC \times \sC \times \sC\rightarrow\sC$. Equivalently,
	the following diagram commutes:
	\begin{equation*}\begin{CD}
	\sC \times \sC \times \sC @>{\otimes \times\id}>> \sC \times \sC \\
	@V{\id \times \otimes}VV @VV{\otimes}V \\
	\sC \times \sC @>>{\otimes}> \sC
	\end{CD}\end{equation*}
	The object $I$ is a left and right unit for $\otimes$. This means
	that $\id_I \otimes \phi = \phi$ and $\phi \otimes \id_I = \phi$ for
	all morphisms $\phi$.
\end{defn}

Let $\sC$ and $\sC'$ be strict monoidal categories. A functor
$F\colon\sC\rightarrow\sC'$ is \emph{monoidal}
if $F(I)=I'$ and the following diagram commutes
\begin{equation*}\begin{CD}
\sC \times \sC  @>{F\times F}>> \sC' \times \sC' \\
@V{\otimes}VV @VV{\otimes}V \\
\sC @>>{F}> \sC'
\end{CD}\end{equation*}

In this article we will make extensive use of string diagrams for
morphisms in a monoidal
category. A morphism is represented by a graph embedded in a
rectangle with boundary points
on the top and bottom edges. The edges of the graph are directed and
are labelled by objects.
The vertices are labelled by morphisms. A morphism $f\colon
x\rightarrow y$ is drawn as follows:
\begin{equation*}
\begin{tikzpicture}[line width=2pt]
\fill[color=blue!20] (-1,-1.5) rectangle (1,1.5);
\draw (0,1.5)[->] -- (0,1) node[anchor=west] {$x$}; \draw[->] (0,1) -- 
(0,-1);
\draw (0,-1) node[anchor=west] {$y$} -- (0,-1.5);
\draw (0,0) node[morphism] {$f$};
\end{tikzpicture}
\end{equation*}

The rules for combining diagrams are that composition is given by
stacking diagrams and the tensor product is given by putting diagrams
side by side.  The corresponding string diagrams are shown in
Figure~\ref{fig:comp}.

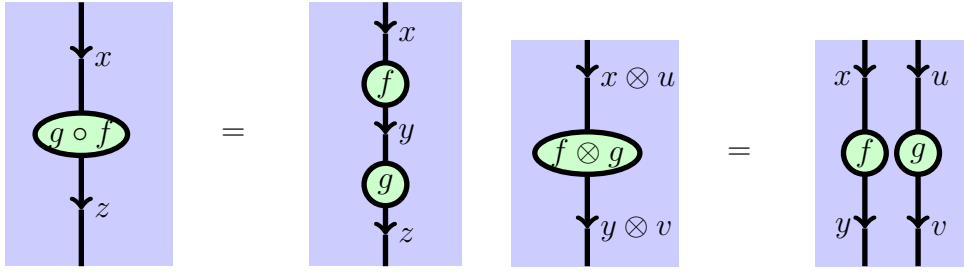
\begin{figure}
	\begin{tikzpicture}[line width=2pt]
	\fill[color=blue!20] (-1,-1.75) rectangle (1.2,1.75);
	\draw (0,1.75)[->] -- (0,1) node[anchor=west] {$x$};
	\draw[->] (0,1) -- (0,-1);
	\draw (0,-1) node[anchor=west] {$z$} -- (0,-1.75);
	\draw (0,0) node[morphism] {$g\circ f$};
	\draw (2,0) node {$=$};
	\fill[color=blue!20] (3,-1.75) rectangle (5,1.75);
	\draw (4,1.75)[->] -- (4,1.333);
	\draw (4,1.333)[->] node[anchor=west] {$x$} -- (4,0);
	\draw (4,0)[->] node[anchor=west] {$y$} -- (4,-1.333);
	\draw (4,-1.333) node[anchor=west] {$z$} -- (4,-1.75);
	\draw (4,0.666) node[morphism] {$f$};
	\draw (4,-0.666) node[morphism] {$g$};
	\end{tikzpicture}
	\hfill
	\begin{tikzpicture}[line width=2pt]
	\fill[color=blue!20] (-1,-1.5) rectangle (1.2,1.5);
	\draw (0,1.5)[->] -- (0,1) node[anchor=west] {$x\otimes u$};
	\draw[->] (0,1) -- (0,-1);
	\draw (0,-1) node[anchor=west] {$y\otimes v$} -- (0,-1.5);
	\draw (0,0) node[morphism] {$f\otimes g$};
	\draw (2,0) node {$=$};
	\fill[color=blue!20] (3,-1.5) rectangle (5,1.5);
	\draw (3.65,1.5)[->] -- (3.65,1) node[anchor=east] {$x$};
	\draw[->] (3.65,1) -- (3.65,-1);
	\draw (3.65,-1) node[anchor=east] {$y$} -- (3.65,-1.5);
	\draw (3.65,0) node[morphism] {$f$};
	\draw (4.35,1.5)[->] -- (4.35,1) node[anchor=west] {$u$};
	\draw[->] (4.35,1) -- (4.35,-1);
	\draw (4.35,-1) node[anchor=west] {$v$} -- (4.35,-1.5);
	\draw (4.35,0) node[morphism] {$g$};
	\end{tikzpicture}
	\caption{Composition and tensor product}\label{fig:comp}
\end{figure}
Rules for simplifying diagrams are that vertices labelled by an
identity morphism can be omitted and edges labelled by the trivial
object, $I$, can be omitted.  The string diagram for the rule that an
identity morphism can be omitted is shown in Figure~\ref{fig:id}.
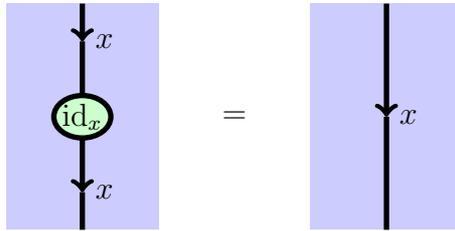
\begin{figure}
	\begin{tikzpicture}[line width=2pt]
	\fill[color=blue!20] (-1,-1.5) rectangle (1,1.5);
	\draw (0,1.5)[->] -- (0,1) node[anchor=west] {$x$}; \draw[->] (0,1) -- 
	(0,-1);
	\draw (0,-1) node[anchor=west] {$x$} -- (0,-1.5);
	\draw (0,0) node[morphism] {$\id_x$};
	\draw (2,0) node {$=$};
	\fill[color=blue!20] (3,-1.5) rectangle (5,1.5);
	\draw (4,1.5)[->] -- (4,0); \draw (4,0) node[anchor=west] {$x$} -- (4,-1.5);
	\end{tikzpicture}
	\caption{Identity morphism}\label{fig:id}
\end{figure}

\subsection{Braided monoidal categories}
Braided monidal categories were introduced in \cite{MR1250465}.
\begin{defn} A \emph{strict braided monoidal category} is a strict
	monoidal category $\sC$ together with
	natural isomorphisms $\sigma(x,y)\colon x\otimes y\rightarrow
	y\otimes x$ for all objects $x,y$.
	These are required to satisfy the following conditions:
	\begin{align}
	\sigma(v,x) \circ (f\otimes \id_x) &= (\id_x\otimes f) \circ
	\sigma(u,x) \label{a:slide1} \\
	\sigma(x,v) \circ (\id_x\otimes f) &= (f\otimes\id_x) \circ
	\sigma(x,u) \label{a:slide2}
	\end{align}
	for all $f\colon u\rightarrow v$ and all $x$, and
	\begin{align}
	\sigma(x,y\otimes
	z)&=\id_y\otimes\sigma(x,z)\,\circ\,\sigma(x,y)\otimes\id_z
	\label{a:tensor1} \\
	\alpha(x\otimes
	y,z)&=\alpha(x,z)\otimes\id_y\,\circ\,\id_x\otimes\alpha(y, z)
	\label{a:tensor2}
	\end{align}
	for all $x,y,z$.
	
	A \emph{braided monoidal functor} is a functor compatible with this
	structure.
\end{defn}

The string diagram for $\sigma(x,y)$ is as follows:
\begin{equation*}
\begin{tikzpicture}[line width=2pt]
\fill[color=blue!20] (-1,-1.2) rectangle (1,1.2);
\draw (-0.5,-1) -- (-0.5,-1.2);
\draw (0.5,-1) -- (0.5,-1.2);
\draw (-0.5,1.2)[->] -- (-0.5,1);
\draw (0.5,1.2)[->] -- (0.5,1);
\draw (0.5,1)[->] node[anchor=north west] {$y$}  .. controls (0.5,0) and
(-0.5,0) .. (-0.5,-1) node[anchor=south east] {$y$};
\fill[color=blue!20] (0,0) circle(0.25);
\draw (-0.5,1)[->] node[anchor=north east] {$x$} .. controls (-0.5,0) and
(0.5,0) .. (0.5,-1)node[anchor=south west] {$x$};
\end{tikzpicture}
\end{equation*}
The string diagrams for
conditions~\eqref{a:slide1} and~\eqref{a:slide2} are shown in
Figure~\ref{fig:rels-slide}, and those for
conditions~\eqref{a:tensor1} and~\eqref{a:tensor2} are shown in
Figure~\ref{fig:rels-tensor}.
\begin{figure}
	\begin{tikzpicture}[line width=2pt]
	\fill[color=blue!20] (-1,-1.2) rectangle (1,3.2);
	\draw (-0.5,-1) node[anchor=south east] {$x$} -- (-0.5,-1.2);
	\draw (0.5,-1) node[anchor=south west] {$v$} -- (0.5,-1.2);
	\draw (-0.5,3.2)[->] -- (-0.5,1);
	\draw (0.5,3.2)[->] -- (0.5,1);
	\draw (-0.5,3.2)[->] -- (-0.5,3) node[anchor=north east] {$u$};
	\draw (0.5,3.2)[->] -- (0.5,3)  node[anchor=north west] {$x$};
	\draw (0.5,1)[->]  node[anchor=west] {$x$} .. controls (0.5,0) and
	(-0.5,0) .. (-0.5,-1);
	\fill[color=blue!20] (0,0) circle(0.25);
	\draw (-0.5,1)[->] node[anchor=east] {$v$}  .. controls (-0.5,0) and
		(0.5,0) .. (0.5,-1) ;
	\draw (-0.5,2) node[morphism] {$f$};
	\draw (1.5,1) node {$=$};
	\fill[color=blue!20] (2,-1.2) rectangle (4,3.2);
	\draw (2.5,3.2)[->] -- (2.5,3) node[anchor=north east] {$u$};
	\draw (3.5,3.2)[->] -- (3.5,3)  node[anchor=north west] {$x$};
	\draw (2.5,1) node[anchor=  east] {$x$} -- (2.5,-1.2);
	\draw (2.5,-1) node[anchor= south east] {$x$};
	\draw (3.5,1) node[anchor=  west] {$u$} -- (3.5,-1.2);
	\draw (3.5,-1) node[anchor= south west] {$v$};
	\draw (3.5,3)[->]  .. controls (3.5,2) and (2.5,2) .. (2.5,1) ;
	\fill[color=blue!20] (3,2) circle(0.25);
	\draw (2.5,3)[->]  .. controls (2.5,2) and (3.5,2) .. (3.5,1) ;
	\draw (3.5,0) node[morphism] {$f$};
	\end{tikzpicture}
	\hfill
	\begin{tikzpicture}[line width=2pt]
	\fill[color=blue!20] (-1,-1.2) rectangle (1,3.2);
	\draw (-0.5,-1) node[anchor=south east] {$v$} -- (-0.5,-1.2);
	\draw (0.5,-1) node[anchor=south west] {$x$} -- (0.5,-1.2);
	\draw (-0.5,3.2)[->] -- (-0.5,1) node[anchor=  east] {$x$};
	\draw (0.5,3.2)[->] -- (0.5,1) node[anchor=  west] {$v$};
	\draw (-0.5,3.2)[->] -- (-0.5,3) node[anchor=north east] {$x$};
	\draw (0.5,3.2)[->] -- (0.5,3)  node[anchor=north west] {$u$};
	\draw (0.5,1)[->]  .. controls (0.5,0) and (-0.5,0) .. (-0.5,-1) ;
	\fill[color=blue!20] (0,0) circle(0.25);
	\draw (-0.5,1)[->]   .. controls (-0.5,0) and (0.5,0) .. (0.5,-1) ;
	\draw (0.5,2) node[morphism] {$f$};
	\draw (1.5,1) node {$=$};
	
	\fill[color=blue!20] (2,-1.2) rectangle (4,3.2);
	\draw (2.5,3.2)[->] -- (2.5,3) node[anchor=north east] {$x$};
	\draw (3.5,3.2)[->] -- (3.5,3)  node[anchor=north west] {$u$};
	\draw (2.5,1) -- (2.5,-1.2);
	\draw (3.5,1) -- (3.5,-1.2);
	\draw (2.5,-1) node[anchor = south east] {$v$};
	\draw (3.5,-1) node[anchor = south west] {$x$};
	\draw (3.5,3)[->]  .. controls (3.5,2) and (2.5,2) .. (2.5,1)
	node[anchor=  east] {$u$};
	\fill[color=blue!20] (3,2) circle(0.25);
	\draw (2.5,3)[->]  .. controls (2.5,2) and (3.5,2) .. (3.5,1)
	node[anchor=  west] {$x$};
	\draw (2.5,0) node[morphism] {$f$};
	\end{tikzpicture}
	\caption{Relations~\eqref{a:slide1} and~\eqref{a:slide2}}\label{fig:rels-slide}
\end{figure}
\begin{figure}
	\begin{tikzpicture}[line width=2pt]
	\fill[color=blue!20] (-1.75,-1.2) rectangle (1.75,1.2);
	\draw (-0.5,-1) -- (-0.5,-1.2);
	\draw (0.5,-1) -- (0.5,-1.2);
	\draw (-0.5,1.2)[->] -- (-0.5,1);
	\draw (0.5,1.2)[->] -- (0.5,1);
	\draw (0.5,1)[->] node[anchor=north west] {$z$}  .. controls (0.5,0)
	and (-0.5,0) .. (-0.5,-1) node[anchor=south east] {$z$};
	\fill[color=blue!20] (0,0) circle(0.25);
	\draw (-0.5,1)[->] node[anchor=north east] {$x\otimes y$}
	.. controls (-0.5,0) and (0.5,0) .. (0.5,-1) node[anchor=south west]
	{$x\otimes y$};
	\draw (2.1,0) node {$=$};
	\begin{scope}[xshift=3.5cm]
	\fill[color=blue!20] (-1,-1.2) rectangle (2,1.2);
	\draw (-0.5,-1) -- (-0.5,-1.2);
	\draw (0.5,-1) -- (0.5,-1.2);
	\draw (1.5,-1) -- (1.5,-1.2);
	\draw (-0.5,1.2)[->] -- (-0.5,1);
	\draw (0.5,1.2)[->] -- (0.5,1);
	\draw (1.5,1.2)[->] -- (1.5,1);
	\draw (1.5,1)[->] node[anchor=north west] {$z$}  .. controls (1.5,0)
	and (-0.5,0) .. (-0.5,-1) node[anchor=south east] {$z$};
	\fill[color=blue!20] (0.2,-0.2) circle(0.2);
	\fill[color=blue!20] (0.8,0.2) circle(0.2);
	\draw (-0.5,1)[->] node[anchor=north east] {$x$}  .. controls
	(-0.5,0) and (0.5,0) .. (0.5,-1) node[anchor=south west] {$x$};
	\draw (0.5,1)[->] node[anchor=north west] {$y$}  .. controls (0.5,0)
	and (1.5,0) .. (1.5,-1) node[anchor=south west] {$y$};	
	\end{scope}
	\end{tikzpicture}
	
	\vspace{1cm}
	
	\begin{tikzpicture}[line width=2pt]
	\fill[color=blue!20] (-1.75,-1.2) rectangle (1.75,1.2);
	\draw (-0.5,-1) -- (-0.5,-1.2);
	\draw (0.5,-1) -- (0.5,-1.2);
	\draw (-0.5,1.2)[->] -- (-0.5,1);
	\draw (0.5,1.2)[->] -- (0.5,1);
	\draw (0.5,1)[->] node[anchor=north west] {$y\otimes z$}  .. controls
	(0.5,0) and (-0.5,0) .. (-0.5,-1) node[anchor=south east] {$y\otimes
		z$};
	\fill[color=blue!20] (0,0) circle(0.25);
	\draw (-0.5,1)[->] node[anchor=north east] {$x$}  .. controls
	(-0.5,0) and (0.5,0) .. (0.5,-1) node[anchor=south west] {$x$};
	\draw (2.1,0) node {$=$};
	\begin{scope}[xshift=3.5cm]
	\fill[color=blue!20] (-1,-1.2) rectangle (2,1.2);
	\draw (-0.5,-1) -- (-0.5,-1.2);
	\draw (0.5,-1) -- (0.5,-1.2);
	\draw (1.5,-1) -- (1.5,-1.2);
	\draw (-0.5,1.2)[->] -- (-0.5,1);
	\draw (0.5,1.2)[->] -- (0.5,1);
	\draw (1.5,1.2)[->] -- (1.5,1);
	\draw (-0.5,1)[->] node[anchor=north east] {$x$}  .. controls
	(-0.5,0) and (1.5,0) .. (1.5,-1) node[anchor=south west] {$x$};
	\fill[color=blue!20] (0.8,-0.2) circle(0.2);
	\fill[color=blue!20] (0.2,0.2) circle(0.2);
	\draw (0.5,1)[->] node[anchor=north east] {$y$}  .. controls (0.5,0)
	and (-0.5,0) .. (-0.5,-1) node[anchor=south west] {$y$};	
	\draw (1.5,1)[->] node[anchor=north west] {$z$}  .. controls (1.5,0)
	and (0.5,0) .. (0.5,-1) node[anchor=south east] {$z$};
	\end{scope}
	\end{tikzpicture}
	\caption{Relations~\eqref{a:tensor1} and~\eqref{a:tensor2}}\label{fig:rels-tensor}
\end{figure}
Note that applying the relations~\eqref{a:slide1},~\eqref{a:slide2}
to $f=\sigma$
give the braid relations for the morphisms $\sigma$.

A \emph{symmetric} monoidal category is a braided monoidal category
for which the braiding satisfies the additional condition that
	\begin{equation}\label{a:square}
	\alpha(x,y)\circ\alpha(y,x) = \id_{y\otimes x}
	\end{equation}
	for all $x,y$.
	
In this case the string diagram for $\sigma(x,y)$ is given in
Figure~\ref{fig:flip}.
\begin{figure}
\begin{equation*}
\begin{tikzpicture}[line width=2pt]
\fill[color=blue!20] (-1,-1.2) rectangle (1,1.2);
\draw (-0.5,-1) -- (-0.5,-1.2);
\draw (0.5,-1) -- (0.5,-1.2);
\draw (-0.5,1.2)[->] -- (-0.5,1);
\draw (0.5,1.2)[->] -- (0.5,1);
\draw (0.5,1)[->] node[anchor=north west] {$y$}  .. controls (0.5,0) and (-0.5,0) .. (-0.5,-1) node[anchor=south east] {$y$};
\draw (-0.5,1)[->] node[anchor=north east] {$x$}  .. controls (-0.5,0) and (0.5,0) .. (0.5,-1) node[anchor=south west] {$x$};
\end{tikzpicture}
\end{equation*}
\caption{The symmetric flip}\label{fig:flip}
\end{figure}
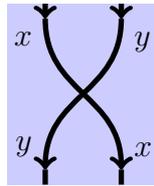

\begin{figure}
	\begin{tikzpicture}[line width=2pt]
	\fill[color=blue!20] (-1,-1.2) rectangle (1,3.2);
	\draw (-0.5,-1) -- (-0.5,-1.2);
	\draw (0.5,-1) -- (0.5,-1.2);
	\draw (-0.5,3.2)[->] -- (-0.5,3) node[anchor=north east] {$x$};
	\draw (0.5,3.2)[->] -- (0.5,3)  node[anchor=north west] {$y$};
	\draw (-0.5,1)[->]   .. controls (-0.5,0) and (0.5,0) .. (0.5,-1)
	node[anchor=south west] {$y$};
	\draw (0.5,1)[->]  .. controls (0.5,0) and (-0.5,0) .. (-0.5,-1)
	node[anchor=south east] {$x$};
	\draw (-0.5,3)[->]  .. controls (-0.5,2) and (0.5,2) .. (0.5,1)
	node[anchor= west] {$x$};
	\draw (0.5,3)[->]  .. controls (0.5,2) and (-0.5,2) .. (-0.5,1)
	node[anchor= east] {$y$};
	\draw (1.5,1) node {$=$};
	\fill[color=blue!20] (2,-1.2) rectangle (4,3.2);
	\draw (2.5,3.2)[->] -- (2.5,1);
	\draw (2.5,1) node[anchor= east] {$x$} -- (2.5,-1.2);
	\draw (3.5,3.2)[->] -- (3.5,1);
	\draw (3.5,1) node[anchor= west] {$y$} -- (3.5,-1.2);
	\end{tikzpicture}
	\caption{Symmetry}\label{fig:sym}
\end{figure}
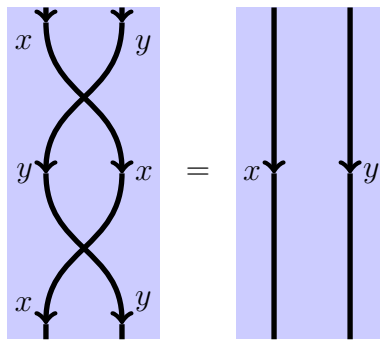
The string diagram for condition~\eqref{a:square} is shown in
Figure~\ref{fig:sym}.
\clearpage
\subsection{Duality}
\begin{defn}
	Let $x$ and $y$ be objects in a monoidal category. Then $x$ is a
	\emph{left dual}
	of $y$ and $y$ is a \emph{right dual} of $x$ means that we are given
	evaluation and
	coevaluation morphisms
	$I\rightarrow x\otimes y$ and $y\otimes x\rightarrow I$ such that
	both of the
	following composites are identity morphisms
	\begin{align*}
	x\rightarrow I\otimes x\rightarrow x\otimes y\otimes x\rightarrow
	x\otimes I\rightarrow x \label{s:trl} \\
	y\rightarrow y\otimes I\rightarrow y\otimes x\otimes y\rightarrow
	I\otimes y\rightarrow y. 
	\end{align*}
	The object $x$ is \emph{dual} to $y$ if it is both a left dual and a
	right dual.
\end{defn}

In the string diagrams we adopt the convention depicted in
Figure~\ref{fig:dual}.
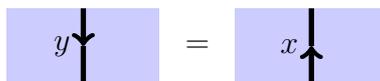
\begin{figure}
	\begin{tikzpicture}[line width=2pt]
	\fill[color=blue!20] (-1,-0.5) rectangle (1,0.5);
	\draw (0,0.5)[->] -- (0,0) node[anchor=east] {$y$};
	\draw (0,0) -- (0,-0.5);
	\draw (1.5,0) node {$=$};
	\fill[color=blue!20] (2,-0.5) rectangle (4,0.5);
	\draw (3,-0.5)[->] -- (3,0) node[anchor=east] {$x$};
	\draw (3,0) -- (3,0.5);
	\end{tikzpicture}
	\caption{Convention for $x$ being a dual of $y$}\label{fig:dual}
\end{figure}
Using this convention and omitting the edge labelled $I$ the string diagrams
for the evaluation and coevaluation morphisms are shown in Figure~{fig:ev}
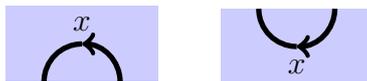
\begin{figure}
\begin{equation*}
\begin{tikzpicture}[line width=2pt]
\fill[color=blue!20] (-1,-1) rectangle (1,0);
\draw (0.5,-1)[->] arc (0:90:0.5);
\draw (0,-0.5) node[anchor=south] {$x$} arc (90:180:0.5);
\end{tikzpicture}
\qquad
\begin{tikzpicture}[line width=2pt]
\fill[color=blue!20] (-1,-1) rectangle (1,0);
\draw (0.5,0)[->] arc (0:-90:0.5);
\draw (0,-0.5) node[anchor=north] {$x$} arc (-90:-180:0.5);
\end{tikzpicture}
\end{equation*}
\caption{Evaluation and coevaluation}\label{fig:ev}
\end{figure}
The string diagrams for the conditions on these morphisms are shown
in Figure~\ref{fig:cupcap}.
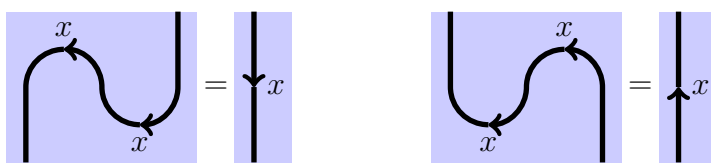
\begin{figure}
	\begin{tikzpicture}[line width=2pt]
	\fill[color=blue!20] (-1.25,-1) rectangle (1.25,1);
	\draw[->] (1,1) -- (1,0) arc(0:-90:0.5);
	\draw[->] (0.5,-0.5) node[anchor=north] {$x$} arc(-90:-180:0.5)
	arc(0:90:0.5);
	\draw (-0.5,0.5) node[anchor=south] {$x$} arc(90:180:0.5) -- (-1,-1);
	\draw (1.5,0) node {$=$};
	\fill[color=blue!20] (1.75,-1) rectangle (2.5,1);
	\draw[->] (2,1) -- (2,0);
	\draw (2,0) node[anchor=west] {$x$} -- (2,-1);
	\end{tikzpicture}
	\qquad\qquad
	\begin{tikzpicture}[line width=2pt]
	\fill[color=blue!20] (-1.25,-1) rectangle (1.25,1);
	\draw[->] (1,-1) -- (1,0) arc(0:90:0.5);
	\draw[->] (0.5,0.5) node[anchor=south] {$x$} arc(90:180:0.5) arc(0:-90:0.5);
	\draw (-0.5,-0.5) node[anchor=north] {$x$} arc(-90:-180:0.5) -- (-1,1);
	\draw (1.5,0) node {$=$};
	\fill[color=blue!20] (1.75,-1) rectangle (2.5,1);
	\draw[->] (2,-1) -- (2,0);
	\draw (2,0) node[anchor=west] {$x$} -- (2,1);
	\end{tikzpicture}
	\caption{Relations for duals}\label{fig:cupcap}
\end{figure}

It follows that there are natural homomorphisms
\begin{equation*}
\Hom(y\otimes u,v)\rightarrow \Hom(u,x\otimes v)
\end{equation*}
sending $\phi\colon y\otimes u\rightarrow v$ to the composite
\begin{equation*}
u\rightarrow I\otimes u\rightarrow x\otimes y\otimes u\rightarrow
x\otimes v.
\end{equation*}
Similarly there are natural homomorphisms
\begin{equation*}
\Hom(u,x\otimes v)\rightarrow \Hom(y\otimes u,v).
\end{equation*}
sending $\phi\colon u\rightarrow x\otimes v$ to the composite
\begin{equation*}
y\otimes u\rightarrow y\otimes x\otimes v\rightarrow I\otimes
v\rightarrow  v.
\end{equation*}
It follows from the relations in Figure~\ref{fig:cupcap} that these
are inverse isomorphisms.

\begin{ex} Take a category $\sC$ and consider the category whose objects are
	functors $F\colon \sC \rightarrow \sC$ and whose morphisms are natural
	transformations. This is a monoidal category with $\otimes$ given by
	composition.
	Then $F$ is left dual to $G$ in this monoidal category is equivalent to
	$F$ is left adjoint to $G$.
\end{ex}

\begin{ex} Let $K$ be a field. The category of vector spaces over $K$ is
	a symmetric monoidal category. A vector space has a dual if and only if
	it is finite dimensional. More generally, Let $K$ be a commutative ring.
	The category of $K$-modules is a symmetric monoidal category.
	A $K$-module has a dual if and only if it is finitely generated and
	projective.
\end{ex}

\begin{defn}
	A \emph{strict pivotal category} is a strict monoidal category with
	an antimonoidal antiinvolution $\ast$ such that $x^\ast$ is a dual of
	$x$ for all objects $x$.
\end{defn}

\section{Deformation quantisation}\label{sec:defq}
This section is based on the unpublished lecture notes \cite{they}
and is also discussed in \cite{MR1664995}.

\begin{defn} A symmetric monoidal category is \emph{infinitesimally-braided} if there
is given a natural transformation $t\colon \otimes\rightarrow\otimes$ that commutes with the symmetry and which satisfies the four-term relation.
\end{defn}

\begin{defn} For $n\ge 2$, the Kohno-Drinfeld Lie algebra $\ft_n$ is the finitely
	presented Lie algebra with generators $t_{ij}=t_{ji}$ for $i\ne j$ and $1\le i,j\le n$
	and the defining relations are
	\begin{align*}
		[t_{ij},t_{kl}] &=0 &\text{if $i,j,k,l$ are distinct}\\
		[t_{ij},t_{ik}+t_{jk}] &=0 &\text{if $i,j,k$ are distinct}
	\end{align*}
\end{defn}

This Lie algebra is graded by assigning degree 1 to each generator.
\begin{equation*}
	\dim(\ft_n^1)=\frac{n(n-1)}2 \qquad \dim(\ft_n^2)=\frac{n(n-1)(n-2)}2
\end{equation*}

\begin{defn} A \emph{Casimir element} in a Lie algebra, $\fg$,
is an invariant element $C\in S^2(\fg)$.
\end{defn}

\begin{ex}\label{ex:fg} Let $\fg$ be a Lie algebra with a Casimir element $C$. Then the category
	of finite dimensional $\fg$-modules is an infinitesimally-braided symmetric monoidal category.
\end{ex}

\begin{prop} Let $(\sC,\otimes,I,\sigma)$ be a braided monoidal category over $K[[\hbar]]$.
Then $(\sC/\hbar,[\otimes],[I],[\sigma])$ is a braided monoidal category. Assume that this
is, in fact, symmetric monoidal. Then it is infinitesimally-braided with 
$t=[(s^2-1)/\hbar]$.
\end{prop}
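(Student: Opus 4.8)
The plan is to define $t$ from the double braiding and then verify, in order, naturality, compatibility with the symmetry, and the four-term relation. Write $s=\sigma$ and let $s^2_{x,y}:=\sigma(y,x)\circ\sigma(x,y)\in\End(x\otimes y)$ denote the double braiding; being assembled from the natural isomorphisms $\sigma$, it is a natural endomorphism of the functor $\otimes$. The hypothesis that $\sC/\hbar$ is symmetric says that $[\sigma]$ satisfies~\eqref{a:square}, that is $[s^2_{x,y}]=\id_{x\otimes y}$. Hence $s^2_{x,y}-\id$ is divisible by $\hbar$, and working $\hbar$-adically (so that division by $\hbar$ is well defined on the torsion-free part of the Hom-modules) I set
\begin{equation*}
t_{x,y}:=\bigl[(s^2_{x,y}-\id)/\hbar\bigr]\in\End_{\sC/\hbar}(x\otimes y).
\end{equation*}
Naturality of $t$ is then automatic: subtracting $\id$, dividing by $\hbar$, and reducing modulo $\hbar$ are all $K[[\hbar]]$-linear operations on Hom-modules, so they carry the naturality squares of $s^2$ to naturality squares for $t$.

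For compatibility with the symmetry I would use the single braid relation that already holds in $\sC$,
\begin{equation*}
\sigma(x,y)\circ s^2_{x,y}=s^2_{y,x}\circ\sigma(x,y),
\end{equation*}
both sides being $\sigma(x,y)\circ\sigma(y,x)\circ\sigma(x,y)$. Subtracting $\sigma(x,y)$ from each side, then dividing by $\hbar$ and reducing, gives $\tau_{x,y}\circ t_{x,y}=t_{y,x}\circ\tau_{x,y}$, where $\tau=[\sigma]$ is the symmetry; this is precisely the assertion that $t$ commutes with the symmetry.

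The four-term relation is the substantive step, and I would derive it from two inputs. The first is additivity of $t$, obtained by expanding a hexagon to first order. Using~\eqref{a:tensor1} and~\eqref{a:tensor2} one rewrites
\begin{equation*}
s^2_{x\otimes y,z}=(\id_x\otimes\sigma(z,y))\circ(s^2_{x,z}\otimes\id_y)\circ(\id_x\otimes\sigma(y,z)),
\end{equation*}
whose zeroth order in $\hbar$ is $\id$; collecting the first-order terms, the two outer factors combine to $\id_x\otimes t_{y,z}=t_{23}$ while the middle factor, conjugated by the symmetry that interchanges $y$ and $z$, contributes $t_{13}$, so that $t_{x\otimes y,z}=t_{13}+t_{23}$. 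The second input is naturality from the first paragraph, applied to the endomorphism $f=t_{x,y}$ of $x\otimes y$ together with $\id_z$:
\begin{equation*}
t_{x\otimes y,z}\circ(t_{x,y}\otimes\id_z)=(t_{x,y}\otimes\id_z)\circ t_{x\otimes y,z}.
\end{equation*}
Writing $t_{12}=t_{x,y}\otimes\id_z$ and substituting the additivity formula turns this into $[t_{12},\,t_{13}+t_{23}]=0$, which is the four-term relation.

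I expect the main obstacle to be the first-order $\hbar$-expansion used for additivity: one must expand the hexagon carefully, check that the zeroth order is the identity, and correctly match the surviving first-order terms with $t_{23}$ and with the symmetry-conjugate $t_{13}$ (it is this computation that guarantees the $t_{13}$ so produced is the same one appearing in the relation). A secondary technical point, to be dispatched at the outset, is the justification that $s^2_{x,y}-\id$ is genuinely divisible by $\hbar$ and that the quotient is well defined after reduction; this is where one uses that the Hom-modules are $\hbar$-adically complete and torsion-free, so that the expression $[(s^2-1)/\hbar]$ makes sense. Once these are in place, naturality reduces the four-term relation to the one-line application above.
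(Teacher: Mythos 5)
The paper states this proposition without proof (it is followed immediately by the remark defining deformation quantisation), so there is nothing to compare your argument against; what you have written is the standard argument and it is correct. Your three steps — naturality of $t$ inherited linearly from naturality of the double braiding, the commutation with the symmetry from $\sigma\circ s^2 = s^2\circ\sigma$, and the four-term relation obtained by combining the first-order expansion of the hexagon identity $s^2_{x\otimes y,z}=(\id_x\otimes\sigma(z,y))\circ(s^2_{x,z}\otimes\id_y)\circ(\id_x\otimes\sigma(y,z))$ (giving $t_{x\otimes y,z}=t_{13}+t_{23}$) with naturality of $t_{-,z}$ applied to the endomorphism $t_{x,y}\otimes\id_z$ — all check out, and you correctly flag the one genuine technical hypothesis, namely that the Hom-modules must be $\hbar$-torsion-free for $[(s^2-1)/\hbar]$ to be well defined, which is implicit in the paper's phrase ``category over $K[[\hbar]]$''.
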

In this case we say the braided monoidal category is a deformation quantisation of the
infinitesimally-braided symmetric monoidal category.

Conversely, given an infinitesimally-braided symmetric monoidal category and a
Drinfeld associator, $\Phi$, there is a construction of a 
deformation quantisation. The category is $\sC[[\hbar]]\cong \sC\otimes_K K[[\hbar]]$.
The ring  $K[[\hbar]]$ is a local ring and the quotient by the maximal ideal is a homomorphism $ K[[\hbar]]\rightarrow K$. This gives, by specialisation, a functor $[\;]\colon \sC[[\hbar]]\rightarrow \sC$.
The braiding is $\sigma = s\circ \exp(\hbar\, C/2)$. The monoidal structure is modified
by taking the associator to be 
\[ \alpha_{XYZ} = \Phi(\hbar\, C_{XY},\hbar\, C_{YZ}) \]

The choice of associator can be 
restricted by the requirement that $[\alpha]$ is the original
associator so that $[\;]$ is a monoidal functor.

The following Theorem, discussed in \cite{MR1187296}, shows that there is no additional issue with duals.
\begin{thm} Any deformation quantisation of an infinitesimally-braided rigid symmetric monoidal category is a ribbon category.
\end{thm}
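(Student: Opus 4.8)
The plan is to verify the three ingredients of a ribbon structure on the deformation quantisation $\sC[[\hbar]]$: rigidity, a natural twist, and the compatibility of the twist with duality and with the braiding. Throughout I write $s$ for the original symmetry and $t$ for the infinitesimal braiding, and I use the preceding Proposition, according to which the reduction of $\sC[[\hbar]]$ modulo $\hbar$ is the original \emph{symmetric} rigid category; on that category the trivial twist $\id$ already makes it a ribbon category. The strategy is to deform this trivial ribbon structure, exploiting that $\sigma=s\circ\exp(\hbar\,t/2)$ differs from $s$ only by a formal exponential of infinitesimal braidings.

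First I would establish rigidity. Keeping the same dual object $X^\ast$ and the same coevaluation as in $\sC$, I would correct the evaluation by the unique $K[[\hbar]]$-linear automorphism forced by one of the two zig-zag identities, and then check that the second identity holds automatically. Because each equation reduces modulo $\hbar$ to a valid identity in the rigid category $\sC$, the corrections can be solved order by order in $\hbar$; the only potential obstruction comes from the deformed associator $\alpha_{XYZ}=\Phi(\hbar\,C_{XY},\hbar\,C_{YZ})$, and the content of \cite{MR1187296} is precisely that this obstruction vanishes. This is the step I expect to be the main obstacle, since it is where the associator could in principle interfere with duality.

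Next I would construct the twist. Contracting the infinitesimal braiding $t_{X,X}$ against the evaluation and coevaluation produces a natural endomorphism $\kappa_X\colon X\to X$, the Casimir of the object $X$, and I would set
\begin{equation*}
\theta_X=\exp\!\left(\tfrac{\hbar}{2}\,\kappa_X\right),
\end{equation*}
a natural automorphism reducing to $\id_X$ modulo $\hbar$ and with $\theta_I=\id_I$. The balancing axiom $\theta_{X\otimes Y}=(\theta_X\otimes\theta_Y)\circ\sigma_{Y,X}\circ\sigma_{X,Y}$ would then follow by taking logarithms. On the one hand the double braiding equals $\exp(\hbar\,t_{X,Y})$, using that $s$ squares to the identity and conjugates $t_{Y,X}$ to $t_{X,Y}$. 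On the other hand the Casimir of a tensor product satisfies the additivity relation $\kappa_{X\otimes Y}=\kappa_X\otimes\id+\id\otimes\kappa_Y+2\,t_{X,Y}$. Here the four-term relation is what guarantees that these summands commute, so that the exponential factors as $(\theta_X\otimes\theta_Y)\circ\exp(\hbar\,t_{X,Y})$, matching the double braiding.

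Finally I would check the compatibility with duality, $\theta_{X^\ast}=(\theta_X)^\ast$. This reduces to the corresponding statement for the object-Casimir, $\kappa_{X^\ast}=(\kappa_X)^\ast$, which follows from the naturality of $t$ together with the hypothesis that $t$ commutes with the symmetry $s$: these force the contraction defining $\kappa_{X^\ast}$ to be the dual of the contraction defining $\kappa_X$. With rigidity, the natural twist $\theta$, and the two compatibilities in hand, the deformation quantisation satisfies the axioms of a ribbon category.
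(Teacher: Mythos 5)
The paper contains no proof of this theorem: it is stated as a known result with the reader referred to \cite{MR1187296}, so there is no internal argument to compare yours against step by step. What you have written is, in outline, the standard proof from that literature (Drinfel$'$d, Cartier, Kassel--Turaev): rigidity survives the deformation because the zig-zag identities can be corrected order by order in $\hbar$, and the twist is $\theta_X=\exp(\hbar\kappa_X/2)$ for the object Casimir $\kappa_X$, with the balancing axiom coming from additivity of $\kappa$ together with the computation $\sigma_{Y,X}\circ\sigma_{X,Y}=\exp(\hbar t_{X,Y})$, which you carry out correctly. Two attributions should be repaired before this counts as a complete argument. First, the commutativity needed to factor $\exp(\tfrac{\hbar}{2}\kappa_{X\otimes Y})$ as $(\theta_X\otimes\theta_Y)\circ\exp(\hbar t_{X,Y})$ is not supplied by the four-term relation: it follows from naturality of $t$, which forces $t_{X,Y}$ to commute with $\kappa_X\otimes\id_Y$ and $\id_X\otimes\kappa_Y$. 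The four-term relation is used elsewhere, namely to make $\sigma=s\circ\exp(\hbar\,t/2)$ satisfy the hexagon axioms relative to the associator $\Phi$ --- a point you may legitimately absorb into the hypothesis that a deformation quantisation is given, but which should then be said. Second, the additivity $\kappa_{X\otimes Y}=\kappa_X\otimes\id_Y+\id_X\otimes\kappa_Y+2t_{X,Y}$ is not free either: it requires the tensor-compatibility axiom expressing $t_{X,Y\otimes Z}$ as a sum of $t_{X,Y}$ and a conjugate of $t_{X,Z}$, combined with the duality used to define $\kappa$. This axiom is suppressed in the paper's one-line definition of an infinitesimal braiding, so you need to state and invoke it explicitly; without it $\kappa_{X\otimes Y}$ need not decompose at all. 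Finally, note that your rigidity step defers the vanishing of the obstruction to the very reference the paper cites for the whole theorem, so that part of your argument is a pointer rather than a proof; that is acceptable here, but it means the only genuinely self-contained content of your proposal is the construction of the twist, which is sound once the two corrections above are made.
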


\begin{ex} Let $\fg$ be a semisimple Lie algebra with $U_q(\fg)$ the associated
	quantised enveloping algebra. Then construct a braided monoidal category
	over $\bQ[[\hbar]]$ from the category of finite dimensional representations
	of $U_q(\fg)$ by substituting $q=\exp(\hbar)$ and expanding into formal power series.
	Then this is a deformation quantisation of Example~\ref{ex:fg} with
	Casimir constructed from the Killing form.
\end{ex}

The result that these two approaches agree is given in \cite{MR1321294}.

\subsection{Classical Lie algebras}\label{sec:class}
In this section we give two examples of a series of Lie algebras constructed from the classical groups and their quantisation.

In both examples we use the ring defined as the quotient of $\bZ[q,z,(zq)^{-1},\delta]$ by the relation
\begin{equation}
z-z^{-1}=(q-q^{-1})\delta
\end{equation}

The first example we take the directed Brauer category which is also the cobordism category of oriented one-manifolds. This is the free rigid symmetric monoidal on an object. It follows that it is also an intial object in the category of PROPs.

The object
\begin{tikzpicture}[red,line width=2pt,scale=0.5]
\fill[blue!30] (0,0) rectangle (1.5,1);
\draw[decoration={markings,mark=at position 0.5 with {\arrow[ultra thick]{>}}},postaction={decorate}] (0.5,1) -- (0.5,0);
\draw[decoration={markings,mark=at position 0.5 with {\arrow[ultra thick]{>}}},postaction={decorate}] (1,0) -- (1,1);
\end{tikzpicture}
is an associative algebra with multiplication map
\begin{equation}
\begin{tikzpicture}[red,line width=2pt]
\fill[blue!30] (0,0) rectangle (2.5,1);
\draw[decoration={markings,mark=at position 0.5 with {\arrow[ultra thick]{>}}},postaction={decorate}] (0.5,1) to [out=270,in=90] (1,0);
\draw[decoration={markings,mark=at position 0.5 with {\arrow[ultra thick]{>}}},postaction={decorate}] (1.5,0) to [out=90,in=270] (2,1);
\draw[decoration={markings,mark=at position 0.5 with {\arrow[ultra thick]{>}}},postaction={decorate}] (1.5,1) arc (360:180:0.25);
\end{tikzpicture}
\end{equation}

Taking the commutator gives a Lie algebra. This is a metric Lie algebra using
\begin{equation}
\begin{tikzpicture}[red,line width=2pt]
\fill[blue!30] (0,0) rectangle (2.5,1);
\draw[decoration={markings,mark=at position 0.5 with {\arrow[ultra thick]{>}}},postaction={decorate}] (1.5,1) arc (360:180:0.25);
\draw[decoration={markings,mark=at position 0.5 with {\arrow[ultra thick]{>}}},postaction={decorate}] (0.5,1) arc (180:360:0.75);
\end{tikzpicture}
\qquad
\begin{tikzpicture}[vector]
\fill[blue!30] (0,0) rectangle (2.5,1);
\draw[decoration={markings,mark=at position 0.5 with {\arrow[ultra thick]{>}}},postaction={decorate}] (2,0) arc (0:180:0.75);
\draw[decoration={markings,mark=at position 0.5 with {\arrow[ultra thick]{>}}},postaction={decorate}] (1,0) arc (180:0:0.25);
\end{tikzpicture}
\end{equation}

The quantisation is the quotient of the category of oriented tangles by the HOMFLY skein relation given in Figure~\ref{fig:hf} and the basic relations given in Figure~\ref{fig:hb}.

\begin{equation*}\begin{tikzpicture}[vector]
\draw[decoration={markings,mark=at position 1 with {\arrow[ultra thick]{>}}},postaction={decorate}] (0,0) circle(0.5);
\path (1.5,0) node[black] {$\displaystyle =\delta$};
\begin{pgfonlayer}{background}
\fill[fill=blue!30] (0,0) circle (1cm);
\fill[fill=blue!30] (3,0) circle (1cm);
\end{pgfonlayer}
\end{tikzpicture}\qquad
\begin{tikzpicture}[vector]
\draw[decoration={markings,mark=at position 1 with {\arrow[ultra thick]{<}}},postaction={decorate}] (0,0) circle(0.5);
\path (1.5,0) node[black] {$\displaystyle =\delta$};
\begin{pgfonlayer}{background}
\fill[fill=blue!30] (0,0) circle (1cm);
\fill[fill=blue!30] (3,0) circle (1cm);
\end{pgfonlayer}
\end{tikzpicture}
\end{equation*}

\begin{figure}
\begin{equation*}
\begin{tikzpicture}[vector]
\fill[blue!30] (0,0) circle (1cm);
\draw[decoration={markings,mark=at position 0.3 with {\arrow[ultra thick]{>}}},postaction={decorate}] (45:1) -- (225:1);
\fill[blue!30] (0,0) circle (0.25cm);
\draw[decoration={markings,mark=at position 0.3 with {\arrow[ultra thick]{>}}},postaction={decorate}] (135:1) -- (315:1);
\end{tikzpicture}
\raisebox{0.9cm}{\;$-$\;}
\begin{tikzpicture}[vector]
\fill[blue!30] (0,0) circle (1cm);
\draw[decoration={markings,mark=at position 0.3 with {\arrow[ultra thick]{>}}},postaction={decorate}] (135:1) -- (315:1);
\fill[blue!30] (0,0) circle (0.25cm);
\draw[decoration={markings,mark=at position 0.3 with {\arrow[ultra thick]{>}}},postaction={decorate}] (45:1) -- (225:1);
\end{tikzpicture}
\raisebox{0.9cm}{\;$=(q-q^{-1})$\;}
\begin{tikzpicture}[red,line width=2pt]
\fill[blue!30] (0,0) circle (1cm);
\draw[decoration={markings,mark=at position 0.5 with {\arrow[ultra thick]{>}}},postaction={decorate}] (45:1) .. controls (45:0.5) and (315:0.5) .. (315:1);
\draw[decoration={markings,mark=at position 0.5 with {\arrow[ultra thick]{>}}},postaction={decorate}] (135:1) .. controls (135:0.5) and (225:0.5) .. (225:1);
\end{tikzpicture}
\end{equation*}
\caption{Skein relation}\label{fig:hf}
\end{figure}

\begin{figure}
\begin{equation*}
\begin{tikzpicture}[vector]
\fill[fill=blue!30] (0,0) circle (1cm);
\draw (225:1) -- (45:0.25);
\draw (45:0.25) to [out=45,in=90] (0.75,0);
\fill[fill=blue!30] (0,0) circle (0.2cm);
\draw[decoration={markings,mark=at position 0.5 with {\arrow[ultra thick]{>}}},postaction={decorate}] (135:1) -- (315:0.25);
\draw (315:0.25) to [out=-45,in=270] (0.75,0);
\end{tikzpicture}
\raisebox{0.9cm}{\,$=z$\,}
\begin{tikzpicture}[vector]
\fill[fill=blue!30] (0,0) circle (1cm);
\draw[decoration={markings,mark=at position 1 with {\arrow[ultra thick]{>}}},postaction={decorate}] (135:1) .. controls (135:0.5) and (-0.25,0.25) .. (-0.25,0);
\draw (-0.25,0) .. controls (-0.25,-0.25) and (225:0.5) .. (225:1);
\end{tikzpicture}
\qquad
\begin{tikzpicture}[vector]
\fill[fill=blue!30] (0,0) circle (1cm);
\draw[decoration={markings,mark=at position 0.5 with {\arrow[ultra thick]{>}}},postaction={decorate}] (135:1) -- (315:0.25);
\draw (315:0.25) to [out=-45,in=270] (0.75,0);
\fill[fill=blue!30] (0,0) circle (0.2cm);
\draw (225:1) -- (45:0.25);
\draw (45:0.25) to [out=45,in=90] (0.75,0);
\end{tikzpicture}
\raisebox{0.9cm}{\,$=z^{-1}$\,}
\begin{tikzpicture}[vector]
\fill[fill=blue!30] (0,0) circle (1cm);
\draw[decoration={markings,mark=at position 1 with {\arrow[ultra thick]{>}}},postaction={decorate}] (135:1) .. controls (135:0.5) and (-0.25,0.25) .. (-0.25,0);
\draw (-0.25,0) .. controls (-0.25,-0.25) and (225:0.5) .. (225:1);
\end{tikzpicture}
\end{equation*}
\caption{Basic relations}\label{fig:hb}
\end{figure}

The second example we take the Brauer category, \cite{MR1503378}, which is also the cobordism category of unoriented one-manifolds.  This is the free rigid symmetric monoidal on a symmetric self-dual object.

Then $\wedge^2 [1]$ is a metric Lie algebra.

The quantisation is the quotient of the category of oriented tangles by the Kauffman skein relation
given in Figure~\ref{fig:kf}.

The value of a simple loop is $\delta+1$.
\begin{equation}
\begin{tikzpicture}[red,line width=2pt]
\fill[fill=blue!30] (0,0) circle (1cm);
\fill[fill=blue!30] (3.5,0) circle (1cm);
\draw (0,0) circle(0.5);
\path (1.75,0) node[black] {$\displaystyle =\delta+1$};
\end{tikzpicture}\end{equation}

\begin{figure}\begin{equation*}
\begin{tikzpicture}[red,line width=2pt]
\fill[blue!30] (0,0) circle (1cm);
\draw (45:1) -- (225:1);
\fill[blue!30] (0,0) circle (0.25cm);
\draw (135:1) -- (315:1);
\end{tikzpicture}
\raisebox{0.9cm}{\,$-$\,}
\begin{tikzpicture}[red,line width=2pt]
\fill[blue!30] (0,0) circle (1cm);
\draw (135:1) -- (315:1);
\fill[blue!30] (0,0) circle (0.25cm);
\draw (45:1) -- (225:1);
\end{tikzpicture}
\raisebox{0.9cm}{\,$=(q-q^{-1})\Bigg($\,}
\begin{tikzpicture}[vector]
\fill[blue!30] (0,0) circle (1cm);
\draw (45:1) .. controls (45:0.5) and (315:0.5) .. (315:1);
\draw (135:1) .. controls (135:0.5) and (225:0.5) .. (225:1);
\end{tikzpicture}
\raisebox{0.9cm}{\,$-$\,}
\begin{tikzpicture}[red,line width=2pt]
\fill[blue!30] (0,0) circle (1cm);
\draw (45:1) .. controls (45:0.5) and (135:0.5) .. (135:1);
\draw (315:1) .. controls (315:0.5) and (225:0.5) .. (225:1);
\end{tikzpicture}
\raisebox{0.9cm}{\,$\Bigg)$}
\end{equation*}
\caption{Skein relation}\label{fig:kf}
\end{figure}

\begin{equation}
\begin{tikzpicture}[vector]
\fill[fill=blue!30] (0,0) circle (1cm);
\draw (225:1) -- (45:0.25);
\draw (45:0.25) .. controls (45:0.5) and (0.75,0.2) .. (0.75,0);
\fill[fill=blue!30] (0,0) circle (0.25cm);
\draw (135:1) -- (315:0.25);
\draw (315:0.25) .. controls (315:0.5) and (0.75,-0.2) .. (0.75,0);
\end{tikzpicture}
\raisebox{0.9cm}{\,$=z$\,}
\begin{tikzpicture}[vector]
\fill[fill=blue!30] (0,0) circle (1cm);
\draw (135:1) .. controls (135:0.5) and (-0.25,0.25) .. (-0.25,0);
\draw (-0.25,0) .. controls (-0.25,-0.25) and (225:0.5) .. (225:1);
\end{tikzpicture}
\qquad
\begin{tikzpicture}[vector]
\fill[fill=blue!30] (0,0) circle (1cm);
\draw (135:1) -- (315:0.25);
\draw (315:0.25) .. controls (315:0.5) and (0.75,-0.2) .. (0.75,0);
\fill[fill=blue!30] (0,0) circle (0.25cm);
\draw (225:1) -- (45:0.25);
\draw (45:0.25) .. controls (45:0.5) and (0.75,0.2) .. (0.75,0);
\end{tikzpicture}
\raisebox{0.9cm}{\,$=z^{-1}$\,}
\begin{tikzpicture}[vector]
\fill[fill=blue!30] (0,0) circle (1cm);
\draw (135:1) .. controls (135:0.5) and (-0.25,0.25) .. (-0.25,0);
\draw (-0.25,0) .. controls (-0.25,-0.25) and (225:0.5) .. (225:1);
\end{tikzpicture}
\end{equation}

\clearpage
\section{Diagrams and categories}\label{sec:vogel}
In this section we develop the PROP approach to metric Lie algebras, \cite{MR0171826}.
The first place in which the diagrammatic notation is used for the definition of a metric Lie algebra
is (4.46) and (4.48) in \cite[\S 4.5]{MR2418111}. This is then developed in \cite{MR1318886}.

\subsection{Graphs}
\begin{defn}
	A \emph{pre-graph} $\Gamma$ consists of two finite sets $B$ and $F$ a fixed point free involution $e$ on
	$F$ and a map $\sigma\colon B\sqcup F\rightarrow B\sqcup F$ such that each orbit of $\sigma$
	has order two or three.
A \emph{graph} $\Gamma$ is a pre-graph such that each orbit $\{x,y\}$ of $\sigma$
	of order two satisfies either $\{x,y\}\subset B$ or $\{x,y\}$ is an orbit of $e$.
\end{defn}
Any pre-graph can be reduced to a canonical graph by removing orbits of size two which violate the condition for a graph.

\begin{defn} For any finite set $X$, the set $\cG(X)$ is the set of
isomorphism classes of graphs with $B=X$.
\end{defn}
This defines a functor from the category of finite sets and bijections
to the category of sets.
The skeleton of the category of finite sets and bijections has objects
$[n]$ for $n\in\bN$ and morphisms permutations. This category is denoted by $\cS$
and is an initial object in the category of PROPs. The restriction of the functor to $\cS$ gives the sequence of sets $\cG([n])$ together with an action of $\fS_n$ on $\cG([n])$ for each $n$.

Let $\Gamma$ be a pre-graph and assume we are given a finite set $X$ and
two disjoint inclusions $X\rightarrow B$, which we write as $x\mapsto x_+$
with image $X_+$ and $x\mapsto x_-$ with image $X_-$.
Then we construct a new pre-graph $\Gamma^\prime$ by glueing.
The set $F^\prime$ is $F\cup X_+\cup X_-$ and the set $B^\prime$ is
$B/(X_+\cup X_-)$. Put $\sigma^\prime = \sigma$. Then we define $e^\prime$ 
to be $e$ on $F$ and on $X_+\cup X_-$ it is defined by $e^\prime\colon x_\pm\mapsto x_\mp$.

\begin{defn} The category $\widetilde{\cG}$ has objects finite sets. Let $X$ and $Y$ be finite
	sets. Then a morphism $X\rightarrow Y$ is a graph $\Gamma$ together with a
	bijection $X\sqcup Y\rightarrow B$. Let $\Gamma_1\colon X\rightarrow Y$
	and $\Gamma_1\colon Y\rightarrow Z$ be morphisms. Let $\Gamma$ be the disjoint
	union of the two graphs. Then we have two disjoint inclusions of $Y$ in $B$
	defined by the composites $Y\rightarrow B_{1,2}\rightarrow B$. Then the composite
	$\Gamma_1\circ \Gamma_2$ is obtained by first glueing and then reducing this
	pre-graph to a graph.
\end{defn}

Let $\cG$ be the full subcategory on the objects $\{[n]|n\in\bN\}$.
Then the inclusion $\cG\rightarrow\widetilde{\cG}$ is fully faithful
and essentially surjective, so is an equivalence.

First we give a basic example.
Let $\pi\colon X\rightarrow Y$ be a bijection. Then we construct a morphism $\gamma(\pi)$.
The sets $B(\pi)$ and $F(\pi)$ are both $X\sqcup Y$. The inclusions $X,Y$ in $B$ and $F$ are
denoted by $x,y\mapsto x_B,y_B$ and $x,y\mapsto x_F,y_F$. The map $\sigma$ is the involution
defined by $\sigma(x_F)=\pi(x)_F$ and $\sigma(y_F)=\pi^{-1}(y)_F$. Then we
define the involution $e$ by $e\colon x_B\leftrightarrow x_F$ and $e\colon y_B\leftrightarrow y_F$.

Then $\gamma$ can be interpreted as an inclusion of PROPs $\cS\rightarrow\cG$.

Every object is self-dual since for any identity map $\pi\colon X\rightarrow X$
we can regard $\gamma(\pi)$ either as a morphism $\emptyset\rightarrow X\sqcup X$
or as a morphism $X\sqcup X\rightarrow \emptyset$. Any morphism $X\rightarrow Y$
can also be regarded as a morphism $Y\rightarrow X$ and this defines an anti-involution
on $\cG$. 

This shows that $\widetilde{\cG}$ and $\cG$ are equivalent rigid
symmetric monoidal categories. In particular $\cG$ is a PROP.
An algebra for this PROP is a \emph{symmetric algebra} which is a vector space with a bilinear multiplication and a non-degenerate invariant symmetric form. This point of view gives a finite presentation of $\cG$.

Let $\Gamma$ be a pre-graph and take the equivalence relation on $B\cup F$ 
generated by $x\equiv \sigma(x)$ and $x\equiv e(x)$. Then the equivalence classes
are the connected components of $\Gamma$ and $\Gamma$ is the disjoint union of
its connected components.

Note that any connected graph which has a vertex does not have any orbit of
$\sigma$ of order two. There are only three connected graphs which do not have a vertex:
\begin{itemize}
	\item The empty graph.
	\item The graph $\gamma(\pi)$ where $\pi$ is the identity map of a
	set with one element.
	\item The graph with $B=\emptyset$, $F=\{x,y\}$ and both
	$\sigma$ and $e$ are the involution $x\leftrightarrow y$.
\end{itemize}

\subsection{Diagrams}
In this section we relate the construction in the previous section with the
conventional description of $\cG$.

\begin{defn}
	A pre-graph $\Gamma$ has a geometric realisation $|\Gamma|$. This is defined by
	\[ |\Gamma| = (B\times [0,1])\sqcup (F\times [0,1]) /\sim \]
	where $\sim$ is the equivalence relation generated by $(x,0)\sim (e(x),0)$ and 
	$(x,1)\sim (\sigma(x),1)$.
\end{defn}
This graph has univalent vertices corresponding to $B$,
bivalent vertices corresponding to orbits of $\sigma$ of order two and trivalent vertices
corresponding to orbits of $\sigma$ of order three. A reduction simply erases a bivalent
vertex. 

The map $\sigma$ corresponds to a cyclic ordering of the three edges at a trivalent
vertex. 
Conversely if we are given a graph such that all vertices are univalent
or bivalent and the three edges at each trivalent vertex are cyclically ordered then
we can reconstruct the graph. It is sufficient to define this for connected 
graphs. There are three connected graphs with no trivalent vertex. These
are the empty graph; the interval; and a loop. These we have already discussed.
Let $G$ be a connected graph with a trivalent vertex (and no bivalent vertex). 
Then we take $B$ to be the set
of univalent vertices. The set $F$ consists of the pairs consisting of an edge which
does not contain a univalent vertex and an orientation of the edge. The map $e$
reverses the orientation.

For example, the three connected graphs with no trivalent vertex are:
\begin{center}
\begin{tikzpicture}[red,line width=2pt]
\fill[fill=blue!30] (0,0) circle (1cm);
\end{tikzpicture}\qquad
\begin{tikzpicture}[red,line width=2pt]
\fill[fill=blue!30] (0,0) circle (1cm);
\draw (0,1) -- (0,-1);
\end{tikzpicture}\qquad
\begin{tikzpicture}[red,line width=2pt]
\fill[fill=blue!30] (0,0) circle (1cm);
\draw (0,0) circle (0.5cm);
\end{tikzpicture}
\end{center}

\subsection{Lie algebras}
\begin{defn} A Lie algebra in a $\bZ$-linear symmetric monoidal category
	is an algebra $L$ such that the multiplication map is anti-commutative
	and satisfies the Jacobi identity.
\end{defn}

\begin{defn}
	A metric Lie algebra in a $\bZ$-linear symmetric monoidal category is a symmetric algebra and a Lie algebra.
\end{defn}

Next we construct the PROP for metric Lie algebras.
First we take the free $\bZ[\delta]$-linear symmetric monoidal category on $\cG$.
Then we construct the universal example by imposing anti-symmetry and the Jacobi identity on the multiplication.
The condition that the multiplication is anti-commutative is known as the AS relation and is given in~\eqref{AS}.

\begin{equation}\label{AS}\begin{tikzpicture}[vector]
\draw (0,-0.4) to [out=120,in=270] (-0.3,0) to [out=90,in=-135] (45:1);
\draw (0,-0.4) to [out=60,in=270] (0.3,0) to [out=90,in=-45] (135:1);
\draw[-] (0,-0.4) -- (270:1);
\path (1.5,0) node[black] {$\displaystyle =-$};
\draw[-,xshift=3cm] (0,0) -- (45:1);
\draw[-,xshift=3cm] (0,0) -- (135:1);
\draw[-,xshift=3cm] (0,0) -- (270:1);
\begin{pgfonlayer}{background}
\filldraw [fill=blue!30,draw=blue!50]
(0,0) circle (1cm);
\filldraw [fill=blue!30,draw=blue!50]
(3,0) circle (1cm);
\end{pgfonlayer}
\end{tikzpicture}\end{equation}

Finally we have the Jacobi identity, and is also known as the $IHX$ relation. This is shown in~\eqref{IHX}.

\begin{equation}\label{IHX}\begin{tikzpicture}[red,line width=2pt]
\draw[-]  (0.2,0) --  (45:1);
\draw[-] (-0.2,0) -- (135:1);
\draw[-] (-0.2,0) -- (225:1);
\draw[-]  (0.2,0) -- (315:1);
\draw[-] (-0.2,0) -- (0.2,0);
\path (1.5,0) node[black] {$\displaystyle =$};
\draw[-,xshift=3cm] (0,0.2) -- (45:1);
\draw[-,xshift=3cm] (0,0.2) -- (135:1);
\draw[-,xshift=3cm] (0,-0.2) -- (225:1);
\draw[-,xshift=3cm] (0,-0.2) -- (315:1);
\draw[-,xshift=3cm] (0,-0.2) -- (0,0.2);
\path (4.5,0) node[black] {$\displaystyle +$};
\draw[-,xshift=6cm] (-0.3,-0.3) -- (0.2,0.2) -- (45:1);
\draw[-,xshift=6cm] (0.2,-0.2) -- (-0.2,0.2) -- (135:1);
\draw[-,xshift=6cm] (-0.2,-0.2) -- (225:1);
\draw[-,xshift=6cm] (0.2,-0.2) -- (315:1);
\draw[-,xshift=6cm] (-0.3,-0.3) -- (0.3,-0.3);
\begin{pgfonlayer}{background}
\filldraw [fill=blue!30,draw=blue!50]
(0,0) circle (1cm);
\filldraw [fill=blue!30,draw=blue!50]
(3,0) circle (1cm);
\filldraw [fill=blue!30,draw=blue!50]
(6,0) circle (1cm);
\end{pgfonlayer}
\end{tikzpicture}\end{equation}

This is the definition of $\delta$.
\begin{equation}\label{defd}\begin{tikzpicture}[red,line width=2pt]
\draw[-] (0,0) circle(0.5);
\path (1.5,0) node[black] {$\displaystyle =\delta$};
\begin{pgfonlayer}{background}
\filldraw [fill=blue!30,draw=blue!50]
(0,0) circle (1cm);
\filldraw [fill=blue!30,draw=blue!50]
(3,0) circle (1cm);
\end{pgfonlayer}
\end{tikzpicture}\end{equation}

This is a consequence of the two relations that say that the inner product
is symmetric and the multiplication is anti-symmetric.
\begin{equation}\label{tad}
\begin{tikzpicture}[red,line width=2pt]
\draw[-] (0,0) circle(0.5);
\draw[-] (0,0.5) -- (0,1);
\path (1.5,0) node[black] {$\displaystyle = 0$};
\begin{pgfonlayer}{background}
\filldraw [fill=blue!30,draw=blue!50]
(0,0) circle (1cm);
\end{pgfonlayer}
\end{tikzpicture}\end{equation}

\begin{defn} For each finite set $X$, the module $\cD(X)$ is the
quotient of $\bZ[\delta].\cG(X)$ by the relations 
\eqref{AS},~\eqref{IHX},~\eqref{defd}.
\end{defn}
This defines a functor from the category of finite sets and bijections
to the category of modules.

The restriction of the functor to the skeleton gives the sequence of modules $\wD([n])$ together with an action of $\fS_n$ on $\wD([n])$ for each $n$.

The category $\wD$ is the quotient of $\bZ[\delta].\cG$ by the relations~\eqref{AS},~\eqref{IHX},~\eqref{defd}. This means that $\wD$ is a
$\bZ[\delta]$-linear rigid symmetric monoidal category.
In particular, $\wD$ is a PROP.
An algebra for this PROP is a metric Lie algebra. This point of view gives a finite presentation of $\wD$.

Note that $\cG$ is graded by the number of trivalent vertices in a graph. The relations~\eqref{AS},~\eqref{IHX},~\eqref{defd} are homogeneous and so this descends to a grading on
$\wD$.

\subsection{Representations}
The main result of this section is closely related to (4.36) in \cite[\S 4.4]{MR2418111}.

Let $L$ be a Lie algebra in a linear symmetric monoidal category.
Then an \emph{$L$-module}, $V$, is an object in the category together
with a morphism $\mu_V\colon L\otimes V\rightarrow V$ which satisfies the STU relation.
For example, $L$ is itself an $L$-module. Also $I$ is the trivial $L$-module.

The tensor product of two $L$-modules is an $L$-module and so the category of $L$-modules
is monoidal. Let $U$ and $V$ be $L$-modules then $U\otimes V$ is the $L$-module
where $\mu_{U\otimes V}$ is the sum of two terms. The first term is $\mu_U\otimes\mathrm{id}_V$
and the second is the composite
\begin{equation*}
L\otimes U\otimes V \rightarrow L\otimes V\otimes U \overset{\mu_V\otimes\mathrm{id}_U}{\rightarrow} V\otimes U \rightarrow U\otimes V
\end{equation*}

The following Lemma will be used in the construction of the universal Lie algebra.
This is \cite[Lemma 3.1]{MR1865711}.

For each $n$ we define $\rho(n)\colon [n+1]\rightarrow [n]$.
These are defined by $\rho(0)=0$ and for $n\ge 1$,
\begin{equation*}
	\raisebox{1cm}{$\displaystyle\rho(n)=\sum_{i+j=n-1}$}\;\;
	\begin{tikzpicture}
	\fill [fill=blue!30] rectangle (2,1);
	\draw[vector] (0.75,1) node[anchor=south] {$i$} to [out=270,in=90] (0.25,0) node[anchor=north] {$i$};
	\draw[vector] (1.75,1) node[anchor=south] {$j$} -- (1.75,0) node[anchor=north] {$j$};
	\draw[vector] (0.25,1) to [out=270,in=120] (1,0.25);
	\draw[vector] (1.5,1) to [out=270,in=60] (1,0.25);
	\draw[vector] (1,0.25) -- (1,0);
	\end{tikzpicture}
\end{equation*}

\begin{lemma}\label{lemma1}
	We show that for any $\psi\colon [n]\rightarrow [m]$ that
	the following diagram commutes
	\[ \begin{CD}
	[n+1] @>{\rho(n)}>> [n] \\
	@V{\psi\otimes 1}VV @VV{\psi}V \\
	[m+1] @>>{\rho(m)}> [m]
	\end{CD} \]
	
	Then $\alpha(n)$ defines a representation of $L$ on $\otimes^n L$.
	This says that any diagram $\psi$ is a homomorphism of $L$-modules.
\end{lemma}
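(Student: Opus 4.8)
The content of the statement is that the family $\rho$ equips each object $[n]=L^{\otimes n}$ with the Leibniz (adjoint-tensor) action of $L$, and that every morphism of $\wD$ intertwines these actions; the commuting square for $\psi$ is exactly the assertion that $\psi$ is $L$-equivariant. The plan is to reduce this to a check on the generating morphisms of $\wD$. Let $\mathcal{E}$ denote the class of morphisms $\psi$ for which the square commutes. First I would show that $\mathcal{E}$ is a $\bZ[\delta]$-linear symmetric monoidal subcategory of $\wD$: it contains every identity, it is closed under $\bZ[\delta]$-linear combinations, and it is closed under composition since the square for $\psi_2\circ\psi_1$ is obtained by vertically stacking the squares for $\psi_1$ and $\psi_2$. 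Closure under $\otimes$ is where the precise shape of $\rho$ enters: the recursive description of $\rho(n)$ as $\sum_{i+j=n-1}$ is exactly the tensor-product rule for module structures recalled immediately before the lemma, so $\rho(n_1+n_2)$ splits into the action on the first $n_1$ factors plus the action on the last $n_2$ factors, and hence $\psi_1\otimes\psi_2\in\mathcal{E}$ whenever $\psi_1,\psi_2\in\mathcal{E}$.

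Next I would use the finite presentation of $\wD$: as a $\bZ[\delta]$-linear symmetric monoidal category it is generated by the symmetry, by the trivalent vertex (bracket) $b\colon[2]\to[1]$, and by the metric cup $[0]\to[2]$ and cap $[2]\to[0]$. By the first paragraph it then suffices to place each generator in $\mathcal{E}$. The symmetry lies in $\mathcal{E}$ for free: in any symmetric monoidal category the braiding is automatically a homomorphism for the tensor-product module structure, by naturality of $\sigma$. For the bracket, equivariance unwinds to $\mathrm{ad}_x[y,z]=[\mathrm{ad}_xy,z]+[y,\mathrm{ad}_xz]$; diagrammatically, feeding the action $\rho(2)$ into $b$ and re-expanding produces precisely the two terms on the right-hand side of the Jacobi relation \eqref{IHX}, so $b\in\mathcal{E}$ is equivalent to \eqref{IHX}. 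For the cap, equivariance reads $\langle[x,y],z\rangle+\langle y,[x,z]\rangle=0$, the ad-invariance of the form; in the diagram calculus this is the total antisymmetry of the trivalent vertex after lowering indices with the metric, a consequence of the antisymmetry \eqref{AS} together with symmetry of the form (cf. \eqref{tad}). The cup then lies in $\mathcal{E}$ by duality, being dual to the cap.

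Since $\mathcal{E}$ is a monoidal subcategory containing a generating set, $\mathcal{E}=\wD$ and the square commutes for every $\psi$; specialising $\psi$ to the generators in particular records that $\rho$ is a genuine $L$-module structure on each $[n]$, with base case $\rho(1)=b$ being a module precisely by \eqref{IHX}. I expect the main obstacle to be the closure of $\mathcal{E}$ under tensor product. One must verify that the explicit sum defining $\rho(n)$ really is the iterated Leibniz action and splits as $\rho(n_1)\otimes\id$ plus a term in which the single acting strand is braided across the first block before acting on the second; making this splitting rigorous requires sliding the acting strand past the untouched factors using the braiding identities \eqref{a:slide1}--\eqref{a:tensor2}. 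Once that bookkeeping is in place, the three generator checks are immediate local identities and the result follows.
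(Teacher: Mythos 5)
Your proposal follows the same strategy as the paper's proof: define the class of morphisms for which the square commutes, observe it is a monoidal subcategory (closed under identities, composition, and tensor product), reduce to the generators of the finite presentation, and verify the generators — with the trivalent vertex case being the Jacobi identity. You simply flesh out the steps the paper leaves as "straightforward," in particular the tensor-product closure and the cup/cap checks, so the argument is correct and essentially identical in approach.
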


\begin{proof}
	We use the description of $\wD$ as a finitely generated monoidal
	category.
		
	Consider the set of morphisms which satisfy this condition.
	It is trivial that the identity morphisms satisfy this condition
	and that if two composable morphisms satisfy this condition then the
	composite satisfies this condition. Also if two morphisms satisfy this
	condition then the tensor product does also. These observations show that
	the set of morphisms which satisfy this condition is a monoidal subcategory.
	Hence it is sufficient to check this condition for the generators.
	This is straightforward. For the trivalent vertex this is equivalent to the
	Jacobi identity.
\end{proof}
\clearpage
\subsection{The universal Lie algebra}
In this section we give the construction of the universal Lie algebra.
This is due to Vogel although this has not been published. 

We now assume that we are working over $\bQ$. In fact for the construction of the
universal Lie algebra we only need to assume that $6$ is invertible.

\subsection{The Vogel ring}

For each triple $(D,v,\pi)$ where $D\in \cG(X)$, $v$ is a vertex of $D$ and 
$\pi\colon [3]\rightarrow v$ is a bijection we construct a map
\[ \psi(D,v,\pi)\colon \cG([3])\rightarrow \cG(X) \]

Let $T\in \cG(X)$. We construct $\psi(D,v,\pi)(T)$ as follows.

First we remove $v$. Put $v_F=v\cap F$ and $v_B=v\cap B$. Then 
\[ B^\prime = (B\cup e(v_F))/( v_B)\qquad F^\prime = F/(v_F\cup e(v_F)) \]
and $\sigma^\prime$ and $e^\prime$ are the restrictions of $\sigma$ and $e$.

We have an inclusion of $v_F$ in $B^\prime$ and $[3]$ by restricting $\pi$.
The graph of $T$ is obtained from the disjoint union of $D^\prime$ and $T$
by glueing these points. The boundary points of this graph are the disjoint union
of $B/v_B$ and $[3]/\pi^{-1}(v_F)$. This can be identified with $X$.

This map is compatible with the anti-symmetry relation and the Jacobi relation
and so descends to a linear map $\psi(D,v,\pi)\colon \wD([3])\rightarrow \wD(X)$.

\begin{defn} Let $\varepsilon\colon \fS_3\rightarrow\bZ$
	be the sign representation and define $\Lambda$ by
	\[ \Lambda = \{ D\in \wD([3]) | \sigma D=\varepsilon(\sigma)D \} \]
	for all $\sigma\in \fS_3$.
\end{defn}

The restriction of the map $\psi(D,v,\pi)$ to $\Lambda$ is independent of $\pi$ and so we have a
map 
\[ \psi(D,v)\colon \Lambda\rightarrow \wD(X) \]

\begin{prop} For all $D\in\Lambda$ and whenever $u$ and $v$ are connected by and edge
	we have $\psi(D,v)=\psi(D,u)$.\end{prop}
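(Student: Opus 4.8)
The plan is to reduce the global identity $\psi(D,v)=\psi(D,u)$ to a purely local four-leg identity and then to derive that local identity from the invariance of elements of $\Lambda$. Fix an arbitrary $T\in\Lambda$ and let $\xi$ be the edge joining $u$ and $v$. The two morphisms $\psi(D,v)(T)$ and $\psi(D,u)(T)$ are obtained from the same graph by excising a vertex and regluing $T$, and they coincide outside a neighbourhood of $\{u,v,\xi\}$: removing $v$ leaves the trivalent vertex $u$ joined along $\xi$ to $T$, whereas removing $u$ leaves $T$ joined along $\xi$ to the trivalent vertex $v$. Writing $f$ for the trivalent vertex regarded as a tensor $f_{ijk}$ (totally antisymmetric by \eqref{AS}) and labelling the four legs leaving this neighbourhood by $a,b$ (at $u$) and $c,d$ (at $v$), the two local morphisms are $A=\sum_e f_{abe}T_{cde}$ and $A'=\sum_e f_{cde}T_{abe}$. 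Since $\wD$ is symmetric monoidal and the remainder of $D$ is a fixed morphism, it suffices to prove $A=A'$; the independence of the regluing from the bijection $\pi$, already established, guarantees that $A$ and $A'$ are unambiguous.

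The key input is that $T$ is annihilated by the adjoint action. I would obtain this from Lemma~\ref{lemma1}: viewing $T$ as a morphism $[0]\to[3]$ and taking $\psi=T$ in the commuting square, the top map is $\rho(0)=0$, so the square forces $\rho(3)\circ(T\otimes\id)=0$. In tensor form this is the invariance relation $\sum_e\bigl(f_{xie}T_{ejk}+f_{xje}T_{iek}+f_{xke}T_{ije}\bigr)=0$ for all $x,i,j,k$.

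Now I would run the linear algebra of the three pairing channels. Besides $A,A'$ set $B=\sum_e f_{ace}T_{bde}$, $B'=\sum_e f_{bde}T_{ace}$, $C=\sum_e f_{ade}T_{bce}$ and $C'=\sum_e f_{bce}T_{ade}$. Applying the invariance relation with acting index equal to $a,b,c,d$ in turn, and simplifying the twelve resulting terms by total antisymmetry of $T$ and of $f$ (for instance $T_{ecd}=T_{cde}$, $T_{bed}=-T_{bde}$, $f_{bae}=-f_{abe}$), gives the four relations $A=B-C$, $A=B'-C'$, $A'=B-C'$ and $A'=B'-C$. Subtracting the first from the third yields $A-A'=C'-C$, while subtracting the fourth from the second yields $A-A'=C-C'$; adding these gives $2(A-A')=0$, so $A=A'$ because $6$, and hence $2$, is invertible. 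Composing with the common remainder of $D$ then gives $\psi(D,v)(T)=\psi(D,u)(T)$.

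The step I expect to be most delicate is the sign bookkeeping in the third paragraph: each of the four invariance relations must be rewritten in terms of the six channel quantities using total antisymmetry, and a single misplaced sign would break the cancellation. It is worth emphasising what the argument does and does not use: only the antisymmetry \eqref{AS} and the invariance coming from Lemma~\ref{lemma1} are needed, not the Jacobi relation \eqref{IHX}, and the hypothesis that $6$ is invertible enters solely through the factor $2$. Finally I would dispose of the degenerate configurations, where $u$ and $v$ are joined by more than one edge or where two of the legs $a,b,c,d$ are identified in the remainder of $D$, by the same invariance computation carried out with the correspondingly smaller set of free legs.
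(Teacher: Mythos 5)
Your proof is correct and follows essentially the same route as the paper: both reduce the claim to a local identity among the pairings of the four legs adjacent to the edge $uv$, derive linear relations among these pairings from Lemma~\ref{lemma1}, and combine them to obtain twice the desired identity before invoking the invertibility of $2$. The only difference is presentational — you package the input as the invariance $\rho(3)\circ(T\otimes\id)=0$ of $T\in\Lambda$ and enumerate four instances in index notation, whereas the paper applies Lemma~\ref{lemma1} twice to the composite diagram and finishes with a quarter-turn rotation — and your sign bookkeeping checks out.
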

\begin{proof} The Proposition is equivalent to
	\begin{equation}\label{eqv}\begin{tikzpicture}[red,line width=2pt]
	\draw[-]  (0.2,0) --  (45:1);
	\draw[-] (-0.2,0) -- (135:1);
	\draw[-] (-0.2,0) -- (225:1);
	\draw[-]  (0.2,0) -- (315:1);
	\draw[-] (-0.2,0) -- (0.2,0);
	\draw[-] (-0.2,0) circle (0.1);
	\path (1.5,0) node[black] {$\displaystyle =$};
	\draw[-,xshift=3cm] (0.2,0) -- (45:1);
	\draw[-,xshift=3cm] (-0.2,0) -- (135:1);
	\draw[-,xshift=3cm] (-0.2,0) -- (225:1);
	\draw[-,xshift=3cm] (0.2,0) -- (315:1);
	\draw[-,xshift=3cm] (-0.2,0) -- (0.2,0);
	\draw[-,xshift=3cm] (0.2,0) circle (0.1);
	\begin{pgfonlayer}{background}
	\fill[fill=blue!30]
	(0,0) circle (1cm);
	\fill[fill=blue!30]
	(3,0) circle (1cm);
	\end{pgfonlayer}
	\end{tikzpicture}\end{equation}
	
	Applying Lemma~\ref{lemma1} twice gives that:
	\begin{equation}\label{eqa}\begin{tikzpicture}[red,line width=2pt]
	\draw[-]  (0.2,0) --  (45:1);
	\draw[-] (-0.2,0) -- (135:1);
	\draw[-] (-0.2,0) -- (225:1);
	\draw[-]  (0.2,0) -- (315:1);
	\draw[-] (-0.2,0) -- (0.2,0);
	\draw[-] (-0.2,0) circle (0.1);
	\path (1.5,0) node[black] {$\displaystyle =$};
	\draw[-,xshift=3cm] (0,0.2) -- (45:1);
	\draw[-,xshift=3cm] (0,0.2) -- (135:1);
	\draw[-,xshift=3cm] (0,-0.2) -- (225:1);
	\draw[-,xshift=3cm] (0,-0.2) -- (315:1);
	\draw[-,xshift=3cm] (0,-0.2) -- (0,0.2);
	\draw[-,xshift=3cm] (0,0.2) circle (0.1);
	\path (4.5,0) node[black] {$\displaystyle -$};
	\draw[-,xshift=6cm] (225:1) -- (45:1);
	\draw[-,xshift=6cm] (135:1) -- (315:1);
	\draw[-,xshift=6cm] (225:0.5) circle (0.1);
	\draw[-,xshift=6cm] (135:0.5cm) arc(135:225:0.5cm);
	\begin{pgfonlayer}{background}
	\fill[fill=blue!30]
	(0,0) circle (1cm);
	\fill[fill=blue!30]
	(3,0) circle (1cm);
	\fill[fill=blue!30]
	(6,0) circle (1cm);
	\end{pgfonlayer}
	\end{tikzpicture}\end{equation}
	\begin{equation}\label{eqb}\begin{tikzpicture}[red,line width=2pt]
	\draw[-]  (0.2,0) --  (45:1);
	\draw[-] (-0.2,0) -- (135:1);
	\draw[-] (-0.2,0) -- (225:1);
	\draw[-]  (0.2,0) -- (315:1);
	\draw[-] (-0.2,0) -- (0.2,0);
	\draw[-] (0.2,0) circle (0.1);
	\path (1.5,0) node[black] {$\displaystyle =$};
	\draw[-,xshift=3cm] (0,0.2) -- (45:1);
	\draw[-,xshift=3cm] (0,0.2) -- (135:1);
	\draw[-,xshift=3cm] (0,-0.2) -- (225:1);
	\draw[-,xshift=3cm] (0,-0.2) -- (315:1);
	\draw[-,xshift=3cm] (0,-0.2) -- (0,0.2);
	\draw[-,xshift=3cm] (0,-0.2) circle (0.1);
	\path (4.5,0) node[black] {$\displaystyle -$};
	\draw[-,xshift=6cm] (225:1) -- (45:1);
	\draw[-,xshift=6cm] (315:1) -- (135:1);
	\draw[-,xshift=6cm] (45:0.5) circle (0.1);
	\draw[-,xshift=6cm] (-45:0.5cm) arc(-45:45:0.5cm);
	\begin{pgfonlayer}{background}
	\fill[fill=blue!30] (0,0) circle (1cm);
	\fill[fill=blue!30] (3,0) circle (1cm);
	\fill[fill=blue!30] (6,0) circle (1cm);
	\end{pgfonlayer}
	\end{tikzpicture}\end{equation}
	
	Subtracting~\eqref{eqb} from~\eqref{eqa} and noting that the last diagrams in the
	two equations are equal gives:
	\begin{equation}\label{eqc}\begin{tikzpicture}[red,line width=2pt]
	\draw[-]  (0.2,0) --  (45:1);
	\draw[-] (-0.2,0) -- (135:1);
	\draw[-] (-0.2,0) -- (225:1);
	\draw[-]  (0.2,0) -- (315:1);
	\draw[-] (-0.2,0) -- (0.2,0);
	\draw[-] (-0.2,0) circle (0.1);
	\path (1.5,0) node[black] {$\displaystyle -$};
	\draw[-,xshift=3cm] (0.2,0) -- (45:1);
	\draw[-,xshift=3cm] (-0.2,0) -- (135:1);
	\draw[-,xshift=3cm] (-0.2,0) -- (225:1);
	\draw[-,xshift=3cm] (0.2,0) -- (315:1);
	\draw[-,xshift=3cm] (-0.2,0) -- (0.2,0);
	\draw[-,xshift=3cm] (0.2,0) circle (0.1);
	\path (4.5,0) node[black] {$\displaystyle =$};
	\draw[-,xshift=6cm] (0,0.2) -- (45:1);
	\draw[-,xshift=6cm] (0,0.2) -- (135:1);
	\draw[-,xshift=6cm] (0,-0.2) -- (225:1);
	\draw[-,xshift=6cm] (0,-0.2) -- (315:1);
	\draw[-,xshift=6cm] (0,-0.2) -- (0,0.2);
	\draw[-,xshift=6cm] (0,0.2) circle (0.1);
	\path (7.5,0) node[black] {$\displaystyle -$};
	\draw[-,xshift=9cm] (0,0.2) -- (45:1);
	\draw[-,xshift=9cm] (0,0.2) -- (135:1);
	\draw[-,xshift=9cm] (0,-0.2) -- (225:1);
	\draw[-,xshift=9cm] (0,-0.2) -- (315:1);
	\draw[-,xshift=9cm] (0,-0.2) -- (0,0.2);
	\draw[-,xshift=9cm] (0,-0.2) circle (0.1);
	\begin{pgfonlayer}{background}
	\fill[fill=blue!30] (0,0) circle (1cm);
	\fill[fill=blue!30] (3,0) circle (1cm);
	\fill[fill=blue!30] (6,0) circle (1cm);
	\fill[fill=blue!30] (9,0) circle (1cm);
	\end{pgfonlayer}
	\end{tikzpicture}\end{equation}
	Rotating this gives:
	\begin{equation}\label{eqd}\begin{tikzpicture}[red,line width=2pt]
	\draw[-]  (0,0.2) --  (45:1);
	\draw[-] (0,0.2) -- (135:1);
	\draw[-] (0,-0.2) -- (225:1);
	\draw[-]  (0,-0.2) -- (315:1);
	\draw[-] (0,-0.2) -- (0,0.2);
	\draw[-] (0,-0.2) circle (0.1);
	\path (1.5,0) node[black] {$\displaystyle -$};
	\draw[-,xshift=3cm] (0,0.2) -- (45:1);
	\draw[-,xshift=3cm] (0,0.2) -- (135:1);
	\draw[-,xshift=3cm] (0,-0.2) -- (225:1);
	\draw[-,xshift=3cm] (0,-0.2) -- (315:1);
	\draw[-,xshift=3cm] (0,-0.2) -- (0,0.2);
	\draw[-,xshift=3cm] (0,0.2) circle (0.1);
	\path (4.5,0) node[black] {$\displaystyle =$};
	\draw[-,xshift=6cm] (0.2,0) -- (45:1);
	\draw[-,xshift=6cm] (-0.2,0) -- (135:1);
	\draw[-,xshift=6cm] (-0.2,0) -- (225:1);
	\draw[-,xshift=6cm] (0.2,0) -- (315:1);
	\draw[-,xshift=6cm] (-0.2,0) -- (0.2,0);
	\draw[-,xshift=6cm] (-0.2,0) circle (0.1);
	\path (7.5,0) node[black] {$\displaystyle -$};
	\draw[-,xshift=9cm] (0.2,0) -- (45:1);
	\draw[-,xshift=9cm] (-0.2,0) -- (135:1);
	\draw[-,xshift=9cm] (-0.2,0) -- (225:1);
	\draw[-,xshift=9cm] (0.2,0) -- (315:1);
	\draw[-,xshift=9cm] (-0.2,0) -- (0.2,0);
	\draw[-,xshift=9cm] (0.2,0) circle (0.1);
	\begin{pgfonlayer}{background}
	\fill[fill=blue!30] (0,0) circle (1cm);
	\fill[fill=blue!30] (3,0) circle (1cm);
	\fill[fill=blue!30] (6,0) circle (1cm);
	\fill[fill=blue!30] (9,0) circle (1cm);
	\end{pgfonlayer}
	\end{tikzpicture}\end{equation}
	Then comparing~\eqref{eqc} and~\eqref{eqd} gives twice the relation~\eqref{eqv}. 
	Since 2 is invertible by assumption this gives the relation~\eqref{eqv}.\end{proof}

This shows that $\psi(D,v)=\psi(D,u)$ if the vertices $u$ and $v$ are in the
same connected component of $D$.

\begin{defn} For each finite set $X$, $\cG^{(s)}(X)$ is the subset of $\cG(X)$ of connected graphs which have a trivalent vertex; and $\cD^{(s)}(X)$ is the submodule spanned by the image of the map $\cG^{(s)}(X)\rightarrow\wD(X)$.
\end{defn}
Note that $\cG^{(s)}([3])=\cG([3])$.

The map $\cG^{(s)}(X)\rightarrow\wD(X)$ is compatible with the antisymmetry
relation and the Jacobi relation and so defines a bilinear map
\[ \Lambda \times \cD^{(s)}(X) \rightarrow \cD^{(s)}(X) \]
Taking $X=[3]$ and restricting to $\Lambda\otimes\Lambda$ gives a linear map
\[ \Lambda\otimes\Lambda \rightarrow \Lambda \]
The associativity condition is satisfied and so $\Lambda$ is a ring and for each
$X$, $\cD^{(s)}(X)$ is a $\Lambda$-module. There is a unique element of $\cG([3])$
with one vertex and this is the unit of $\Lambda$.

Let the degree of a diagram be $(k-1)/2$ where $k$ is the number of trivalent vertices.
Then $\Lambda$ is a graded ring and $\cD^{(s)}(X)$ is a graded $\Lambda$-module. 

A simple observation is that if $D$ is a diagram with at least two trivalent vertices
then $\alpha\beta D=\beta\alpha D$ for all $\alpha\beta\in\Lambda$.
Vogel has also shown that $12ab=12ba$ for any $a,b\in\Lambda_\bZ$.
Hence $\Lambda$ is a commutative ring since we have assumed that $12$ is invertible.

Let $t$, $s$, $p$ be independent indeterminates
of degrees 1,2,3 respectively. Set $\omega=p+s\,t+2\,t^3$.
Then define the graded commutative ring $K$ by
\[ K= \bQ[t]\oplus \omega\bQ[t,s,p] \subset \bQ[t,s,p] \]
The elements of $K$ up to degree 7 are:
\begin{center}
	\begin{tabular}{cccccccc}
		0 & 1 & 2 & 3 & 4 & 5 & 6 & 7 \\ \hline
		$1$ & $t$ & $t^2$ & $t^3$ & $t^4$ & $t^5$ & $t^6$ & $t^7$ \\
		& & & $\omega$ & $\omega t$ & $\omega t^2$ & $\omega t^3$ & $\omega t^4$ \\
		& & & & & $\omega s$ & $\omega st$ & $\omega st^2$ \\
		& & & & & & $\omega p$ & $\omega s^2$ \\
		& & & & & &  & $\omega pt$ \\
	\end{tabular}
\end{center}

The main result of \cite{MR1839694} is to construct a homomorphism of graded rings $K\rightarrow\Lambda$. This is known
to be an isomorphism for degree at most $9$ and in degree $10$ it is surjective with a one dimensional kernel, \cite{kneissler}.

For example, $t$ and $\omega$ are defined by
\begin{equation}\begin{tikzpicture}[red,line width=2pt]
\fill[fill=blue!30] (0,0) circle (1cm);
\draw[-] (0:0.5) -- (0:1);
\draw[-] (120:0.5) -- (120:1);
\draw[-] (240:0.5) -- (240:1);
\draw[-] (0:0.5) -- (120:0.5);
\draw[-] (120:0.5) -- (240:0.5);
\draw[-] (240:0.5) -- (0:0.5);
\path (1.5,0) node[black] {$\displaystyle =t$};
\end{tikzpicture}\qquad
\begin{tikzpicture}[red,line width=2pt]
\fill[fill=blue!30] (0,0) circle (1cm);
\draw (0,0) -- (0:0.5);
\draw (0,0) -- (120:0.5);
\draw (0,0) -- (240:0.5);
\draw (60:0.5) -- (60:1);
\draw (180:0.5) -- (180:1);
\draw (300:0.5) -- (300:1);
\draw (0:0.5) -- (60:0.5) -- (120:0.5) -- (180:0.5) -- (240:0.5) -- (300:0.5) -- cycle;
\end{tikzpicture}
\raisebox{0.85cm}{\quad $=4t^3-\frac32 \omega$}
\end{equation}

\subsection{The universal simple Lie algebra}
Let $X$ be a finite set. Then our aim is to construct a $\Lambda$-module $\cD(X)$
which is a quotient of the vector space $\wD(X)$ and such that the composite
map of vector spaces $\cD^{(s)}(X)\rightarrow\wD(X)\rightarrow\cD(X)$ is an inclusion
of $\Lambda$-modules.

\begin{defn} For each finite set $X$, $\cG^{(c)}(X)$ is the subset of $\cG(X)$ of connected graphs; and $\cD^{(c)}(X)$ is the submodule spanned by the image of the map $\cG^{(c)}(X)\rightarrow\wD(X)$.
\end{defn}

A basic observation is that for any
finite set $X$ there is a natural isomorphism
\begin{equation}
\wD(X) \cong \wD(\emptyset) \otimes \left( 
\oplus_\pi \otimes_{Y\in\pi} \cD^{(c)}(Y) \right) 
\end{equation} 
where the sum is over all partitions of $X$ into non-empty subsets and the tensor
products are over $\bQ$.

The vector space $\cD(\emptyset)$ is a $\bQ$-algebra under the disjoint union
and is the symmetric algebra on the graded vector space $\cD^{(c)}(\emptyset)$.
There is also an isomorphism
\begin{equation*}
\cD^{(c)}(\emptyset)\cong \bQ \oplus \cD^{(s)}(\emptyset)
\end{equation*}
A basis element for the factor $\bQ$ is the circle $\delta$ and $\cD^{(s)}(\emptyset)$
is the free $\Lambda$-module on $\Theta$, where
\begin{equation}
\raisebox{0.9cm}{$\Theta=$\;}
\begin{tikzpicture}
\fill[fill=blue!30] (0,0) circle (1cm);
\draw[vector] (0,0.5) -- (0,-0.5);
\draw[vector] (0,0.5) to [out=60,in=90] (0.5,0);
\draw[vector] (0,0.5) to [out=120,in=90] (-0.5,0);
\draw[vector] (0,-0.5) to [out=300,in=270] (0.5,0);
\draw[vector] (0,-0.5) to [out=240,in=270] (-0.5,0);
\end{tikzpicture}
\end{equation}

This constructs a surjective homomorphism
of $\bQ$-algebras $\wD(\emptyset)\rightarrow\Lambda[\delta]$.

\begin{defn}\label{defn:f}
For each finite set $X$, $\cD(X)$ is defined by by
\begin{equation}
\cD(X) \cong  \Lambda[\delta] \otimes \left( 
\oplus_\pi \otimes_{Y\in\pi} \cD^{(c)}(Y) \right) 
\end{equation} 
where the sum is over all partitions of $X$ into non-empty subsets and the tensor products
are over $\Lambda$.
\end{defn}

\begin{defn}
	The universal Lie algebra is the PROP with
	\begin{equation*}
		\Hom([n],[m])=\cD([n]\coprod [m])
	\end{equation*}
	and composition given by glueing.
\end{defn}

The following is a relation in this category.
\begin{equation}\begin{tikzpicture}[red,line width=2pt]
\draw[-] (0,0) circle(0.5);
\draw[-] (0,0.5) -- (0,1);
\draw[-] (0,-0.5) -- (0,-1);
\path (1.5,0) node[black] {$\displaystyle = 2t$};
\draw[-] (3,-1) -- (3,1);
\begin{pgfonlayer}{background}
\fill[fill=blue!30] (0,0) circle (1cm);
\fill[fill=blue!30] (3,0) circle (1cm);
\end{pgfonlayer}
\end{tikzpicture}\end{equation}

An algebra for this PROP is a metric Lie algebra such that
the canonical homomorphism $\End(I)\rightarrow \End(L)$ is an isomorphism
and such that $\Hom([0],[3])$ is the free $\End(I)$-module on a trivalent
vertex. This then gives a character $\Lambda[\delta]\rightarrow\End(I)$
by evaluation.

In order to construct the universal simple Lie algebra we need to form a quotient. The aim is to force a certain idempotent $\pi\in\End(\wedge^2L)$
to be zero. This is motivated by the observation that there are no known examples for which this is non-zero.

Let $\pi$ be the antisymmetriser in the group algebra of $\fS_2$.
We identify $\pi\End([2])\pi$ with $\End(\bigwedge^2[1])$.
The element $\pi$ is the identity in this algebra and $K$ is also
an element in this algebra. Introduce $w'$ by
\begin{equation}
\begin{tikzpicture}[vector]
\fill[fill=blue!30] (0,0) rectangle (2,1);
\draw (0.5,0.25) -- (1.5,0.25) -- (1.5,0.75) -- (0.5,0.75) -- cycle;
\draw (1,0.75) -- (1,0.25);
\draw (0.25,0) to [in=90,out=45] (0.5,0.25);
\draw (1.75,0) to [in=90,out=-45] (1.5,0.25);
\draw (0.25,1) to [in=270,out=45] (0.5,0.75);
\draw (1.75,1) to [in=270,out=-45] (1.5,0.75);
\end{tikzpicture}
\end{equation}
Put $w=\pi w'\pi$.

These elements satisfy
\begin{equation}
K^2=2tK\qquad
wK = (4t^3-\frac32 \omega)K = Kw
\end{equation}
A calculation due to Kneisler gives
\begin{equation}
\omega w^2 = -\frac32 p\omega w + 
(4t^3-\frac32 \omega)(\frac12 \omega t^2-\frac34 s\omega)K
\end{equation}
Define $E$ by
\begin{equation*}
E = (2t^2-\frac34 \frac{\omega}{t})K-w
\end{equation*}
then this satisfies $E^2=\frac 32 pE$.

This shows that, if $p$ is invertible, then
$\frac23 \frac1p E$ is an idempotent. The aim is
to impose the relation $E=0$. However this requires that $\dim E=0$ which is a relation on the scalars.

First we invert $p\omega$, so $\Lambda$ is replaced by $\Lambda[\frac1{p\omega]}]$ and the homomorphism 
 $\bQ[t]\oplus \omega\bQ[t,s,p]\rightarrow\Lambda$ is replaced by the homomorphism $\bQ[t,s,p,p^{-1}]\rightarrow\Lambda[\frac1{p\omega]}]$.
Next we extend the homomorphism $\Lambda\rightarrow\Lambda[\frac1{p\omega]}]$ to a homomorphism $\Lambda[\delta]\rightarrow\Lambda[\frac1{p\omega]}]$ by
\[ \delta\mapsto \frac{p-2st-4t^3}p \]
Using this homomorphism we can specialise the ring of scalars to
$\Lambda[\frac1{p\omega]}]$ and in this specialisation we can impose the relation $E=0$.
This gives the universal simple Lie algebra.

This is called the universal simple Lie algebra because every simple Lie algebra or simple Lie superalgebra is an algebra for this PROP.
The simple Lie superalgebras are discussed in \cite{MR1941724}. A more exotic algebra for this PROP is discussed in \cite{MR3154807}.

The following relation will be made use of in section \ref{sec:plane}. This uses the notation for
diagrams in \eqref{eq:diagrams}.
\begin{prop}
	The universal simple Lie algebra satisfies the relation \eqref{eq:H3}
\end{prop}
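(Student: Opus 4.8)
The plan is to verify \eqref{eq:H3} as an identity of endomorphisms in the universal simple Lie algebra, by reducing it to the small explicit algebra already computed above. First I would rewrite the diagrams appearing in \eqref{eq:H3}, in the notation of \eqref{eq:diagrams}, as endomorphisms of $L\otimes L$ and then project onto the exterior square using the antisymmetriser $\pi$. Under the identification of $\pi\End([2])\pi$ with $\End(\bigwedge^2[1])$, every such diagram becomes a $\Lambda[\delta]$-linear combination of the three elements $\pi$, $K$ and $w$. This is the bookkeeping step and uses only the definitions of $K$ and $w$ together with the antisymmetry relation \eqref{AS} and the Jacobi relation \eqref{IHX}.

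Next I would substitute the structural relations established above, namely
\[ K^2 = 2tK, \qquad wK = Kw = \left(4t^3-\tfrac32\omega\right)K, \]
together with Kneissler's relation
\[ \omega w^2 = -\tfrac32 p\,\omega w + \left(4t^3-\tfrac32\omega\right)\left(\tfrac12\omega t^2-\tfrac34 s\omega\right)K. \]
In the universal simple Lie algebra the relation $E=0$ holds, that is
\[ w = \left(2t^2-\tfrac34\tfrac{\omega}{t}\right)K \]
after inverting $p\omega$ and specialising $\delta\mapsto (p-2st-4t^3)/p$. Substituting this expression for $w$ into the reduced form of \eqref{eq:H3} and repeatedly applying $K^2=2tK$ collapses every term to a single scalar multiple of $K$. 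I expect the resulting scalar identity to be an equality in the localised Vogel ring $\Lambda[\tfrac1{p\omega}]$, so that \eqref{eq:H3} holds. Consistency of this substitution is automatic because $E^2=\tfrac32 pE$, so imposing $E=0$ is compatible with every quadratic relation satisfied by $w$.

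The main obstacle I anticipate is scalar bookkeeping rather than any conceptual difficulty. One must carry the specialisation of $\delta$ and the inversion of $p\omega$ correctly through the reduction, and confirm that the scalar produced by collapsing \eqref{eq:H3} matches the prescribed value; since $t$, $s$, $p$ are the symmetric functions of the Vogel parameters $\alpha,\beta,\gamma$, this is where the roots $2t-\alpha$, $2t-\beta$, $2t-\gamma$ enter. A secondary subtlety is the translation in the first step: \eqref{eq:H3} is presented using the notation \eqref{eq:diagrams}, which may encode the generators together with the symmetry $\sigma$ on $[2]$, so isolating the exterior-square part via $\pi$ before reducing to $\{\pi,K,w\}$ is the step most likely to introduce a sign error that would then propagate through the whole identity.
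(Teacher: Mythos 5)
There is a genuine gap: your plan never leaves $\pi\End([2])\pi\cong\End(\bigwedge^2[1])$, so at best it verifies $\pi(\text{LHS})\pi=\pi(\text{RHS})\pi$, which is only the exterior-square component of \eqref{eq:H3}. The relation \eqref{eq:H3} is an identity in the full six-dimensional algebra spanned by $1,U,K,H,X,H^2$, and most of its content lives on the symmetric square: the coefficients $t,s,p$ are the elementary symmetric functions of $\alpha,\beta,\gamma$ precisely because \eqref{eq:H3} restricted to $S^2\fg\cong 1\oplus Y(\alpha)\oplus Y(\beta)\oplus Y(\gamma)$ encodes the characteristic polynomial of $H$ there. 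Compressing by $\pi$ kills $U$ outright ($\pi U\pi=0$) and collapses $1$ and $X$ to multiples of $\pi$, so your computation cannot recover the coefficients of $U$, $1$ and $H$ in \eqref{eq:H3}; knowing $\pi H^3\pi$ in the two- or three-dimensional algebra $\{\pi,K,w\}$ does not determine $H^3$ in the six-dimensional one.

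The paper supplies exactly the missing mechanism. It first combines $E=0$ with Vogel's identity $w=w'+\frac12 H^3-\frac32 tH^2+\frac12 t^2K$ to obtain an equation \eqref{eq:cubic} in the full algebra relating $w'$, $H^3$, $H^2$, $K$, $1$ and $X$, and then applies the quarter-turn rotation, under which $w'\leftrightarrow H^3$, $H\leftrightarrow K$, $1\leftrightarrow U$ and $H^2$ is fixed. Because this rotation does not commute with $\pi$, it transports the information in $E=0$ (a priori a statement about $\bigwedge^2\fg$) across to the symmetric square; eliminating $w'$ between the two equations is what produces \eqref{eq:H3} with the correct $U$ and $X$ terms. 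Your substitution of $E=0$ and the quadratic relations $K^2=2tK$, $wK=Kw$ and Kneissler's identity is fine as far as it goes, but without the rotation step (or an independent computation of the minimal polynomial of $H$ on $S^2\fg$) the argument cannot close. A smaller point: you quote $E=0$ as $w=(2t^2-\frac34\frac{\omega}{t})K$ directly from the definition of $E$, whereas the form actually used in the proof is $w=(\frac12 t^2-\frac34 s)K-\frac32 p\pi$, whose $\pi$-term is exactly what becomes $\frac p2(U+X-2)$ after rotation; if you drop that term you lose the entire $p$-dependence of \eqref{eq:H3}.
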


\begin{proof} The relation $E=0$ can be written as
	\begin{equation*}
		w=\left( \frac12 t^2-\frac34 s\right)K-\frac32 p\pi
	\end{equation*}
	Then we also have from \cite{univ} that
	\begin{equation*}
		w=w'+\frac12 H^3-\frac32 tH^2+\frac12 t^2K
	\end{equation*}
	Eliminating $w$ from these equations gives
	\begin{equation}\label{eq:cubic}
	w'+\frac12 H^3 - \frac32 tH^2 = -\frac34 sK - \frac34 (1-X)
	\end{equation}
	Under rotation through a quarter turn we have $w'\leftrightarrow H^3$, $H^2\leftrightarrow H^2$, $H\leftrightarrow K$ and $1\leftrightarrow U$.
	So rotating \eqref{eq:cubic} gives a further equation. Eliminating $w'$ from these two equations gives the equation for $H^3$.
\end{proof}

\section{Vogel plane}\label{sec:plane}
In this section we construct the Vogel plane by imposing relations on the
PROP for a metric Lie algebra.

Then we impose the conditions that
\begin{equation*}
\bigwedge^2 \fg \cong \fg \oplus X \qquad
S^2 \fg \cong 1 \oplus Y(\alpha) \oplus Y(\beta) \oplus Y(\gamma)
\end{equation*}

The two string centraliser algebra has dimension six and we assume the following are a basis:
\[ 1, U, K, H, X, H^2 \]
These correspond to the diagrams
\begin{equation}\label{eq:diagrams}
	\begin{tabular}{cccccc}
\begin{tikzpicture}[vector]
\fill[fill=blue!30] (0,0) rectangle (1,1);
\draw (0.25,1) -- (0.25,0);
\draw (0.75,1) -- (0.75,0);
\end{tikzpicture} &
\begin{tikzpicture}[vector]
\fill[fill=blue!30] (0,0) rectangle (1,1);
\draw (0.75,0) arc (0:180:0.25);
\draw (0.75,1) arc (0:-180:0.25);
\end{tikzpicture} &
\begin{tikzpicture}[vector]
\fill[fill=blue!30] (0,0) rectangle (1,1);
\draw (0.25,1) -- (0.5,0.65);
\draw (0.75,1) -- (0.5,0.65);
\draw (0.5,0.65) -- (0.5,0.35);
\draw (0.25,0) -- (0.5,0.35);
\draw (0.75,0) -- (0.5,0.35);
\end{tikzpicture} &
\begin{tikzpicture}[vector]
\fill[fill=blue!30] (0,0) rectangle (1,1);
\draw (0.25,1) -- (0.35,0.5);
\draw (0.75,1) -- (0.65,0.5);
\draw (0.65,0.5) -- (0.35,0.5);
\draw (0.25,0) -- (0.35,0.5);
\draw (0.75,0) -- (0.65,0.5);
\end{tikzpicture} &
\begin{tikzpicture}[vector]
\filldraw [fill=blue!30,draw=blue!50] (0,0) rectangle (1,1);
\draw (0.25,1) .. controls (0.25,0.5) and (0.75,0.5) .. (0.75,0);
\draw (0.75,1) .. controls (0.75,0.5) and (0.25,0.5) .. (0.25,0);
\end{tikzpicture} &
\begin{tikzpicture}[vector]
\fill[fill=blue!30] (0,0) rectangle (1,1);
\draw (0.35,0.35) rectangle (0.65,0.65);
\draw (0.25,1) -- (0.35,0.65);
\draw (0.75,1) -- (0.65,0.65);
\draw (0.25,0) -- (0.35,0.35);
\draw (0.75,0) -- (0.65,0.35);
\end{tikzpicture} \\
$1$ & $U$ & $K$ & $H$ & $X$ & $H^2$
	\end{tabular}
\end{equation}

Here $1$ is a unit and the remaining multiplication table is
\begin{center}
	\begin{tabular}{c|ccccc}
		& $U$ & $K$ & $H$ & $X$ & $H^2$ \\ \hline
		$U$ & $\delta U$ & 0 & $2t\, U$ & $U$ & $4t^2\, U$ \\
		$K$ & 0 & $2t K$ & $t\, K$ & $-K$ & $t^2\, K$ \\
		$H$ & $2t\, U$ & $t\, K$ & $H^2$ & $H-K$ & $H^3$ \\
		$X$ & $U$ & $-K$ & $H-K$ & 1 & $H^2-t\, K$ \\
		$H^2$ & $4t^2\, U$ & $t^2\, K$ & $H^3$ & $H^2-t\, K$ & $H^4$ \\
	\end{tabular}
\end{center}

Then we have the relations
\begin{equation}\label{digon}
\begin{tikzpicture}[red,line width=2pt]
\draw[-] (0,0) circle(0.5);
\draw[-] (0,0.5) -- (0,1);
\draw[-] (0,-0.5) -- (0,-1);
\path (1.5,0) node[black] {$\displaystyle = 2t$};
\draw[-,xshift=3cm] (90:1) -- (270:1);
\begin{pgfonlayer}{background}
\fill[fill=blue!30] (0,0) circle (1cm);
\fill[fill=blue!30] (3,0) circle (1cm);
\end{pgfonlayer}
\end{tikzpicture}\end{equation}

\begin{equation}\label{triV}
\begin{tikzpicture}[red,line width=2pt]
\draw (30:0.5) -- (30:1);
\draw (150:0.5) -- (150:1);
\draw (270:0.5) -- (270:1);
\draw (30:0.5) -- (150:0.5) -- (270:0.5) -- cycle;
\path (1.5,0) node[black] {$\displaystyle = t$};
\draw[-,xshift=3cm] (0,0) -- (30:1);
\draw[-,xshift=3cm] (0,0) -- (150:1);
\draw[-,xshift=3cm] (0,0) -- (270:1);
\begin{pgfonlayer}{background}
\fill[fill=blue!30] (0,0) circle (1cm);
\fill[fill=blue!30] (3,0) circle (1cm);
\end{pgfonlayer}
\end{tikzpicture}\end{equation}

\begin{equation}\label{triX}
\begin{tikzpicture}[red,line width=2pt]
\draw (30:0.5) -- (30:1);
\draw (150:0.5) -- (150:1);
\draw (270:0.5) -- (270:1);
\draw (30:0.5) -- (150:0.5) -- (270:0.5) -- cycle;
\path (1.5,0) node[black] {$\displaystyle = 0$};
\path (0,-1) node[black] {$X$};
\begin{pgfonlayer}{background}
\fill[fill=blue!30] (0,0) circle (1);
\end{pgfonlayer}
\end{tikzpicture}\end{equation}

The first relation fixes a normalisation and is equivalent to
saying that the Casimir acts by $2t$ on the adjoint representation.
The other two relations are consequences of the Jacobi identity.

Finally we have the relations
\begin{equation}\label{eq:H3}
H^3=t\,H^2 -s\,H + \frac s2\, K + \frac p2 \left( U+X-2 \right)
\end{equation}

The dimensions are given by
\begin{align}
\delta &= \frac{-4t^3+2st+p}{p} \\
\dim X &= -\delta \frac{2t^3+st+p}{p}
\end{align}

The Vogel plane is defined, as a PROP, by a finite presentation and the motivation for this presentation came from the universal simple Lie algebra. Therefore, by construction, we have a map of PROPs from the Vogel plane to the universal simple Lie algebra.

It is convenient to introduce a Galois extension of the ring of scalars. The ring of scalars is extended to $\bQ[\alpha,\beta,\gamma,\frac1{\alpha\beta\gamma}]$. The homomorphism from  $\bQ[s,t,p,p^{-1}]$ is given by
\[ t\mapsto\alpha+\beta+\gamma,
s\mapsto\alpha\,\beta+\beta\,\gamma+\gamma\,\alpha,
p\mapsto \alpha\,\beta\,\gamma \]

This gives the representations $Y(\alpha)$,$Y(\beta)$,$Y(\gamma)$
and we have the relations
\begin{equation}
\Hom(Y(\alpha)\otimes L,Y(\beta))=0
\end{equation}
and similarly for $(\alpha,\beta)$ replaced by any pair of distinct elements of $\{\alpha,\beta,\gamma\}$. In particular, this implies the following relation
\begin{equation}
\begin{tikzpicture}[red,line width=2pt]
\fill[fill=blue!30] (0,0) circle (1);
\draw (30:0.5) -- (30:1);
\draw (150:0.5) -- (150:1);
\draw (270:0.5) -- (270:1);
\draw (30:0.5) -- (150:0.5) -- (270:0.5) -- cycle;
\path (1.5,0) node[black] {$\displaystyle = 0$};
\path (0,0.5) node[black] {$\alpha$};
\path (0,-1) node[black] {$\beta$};
\end{tikzpicture}\end{equation}

The dimensions are given by
\begin{align}
\delta &= -\frac{(2t-\alpha)(2t-\beta)(2t-\gamma)}{\alpha\beta\gamma} \label{dimV} \\
\dim X &= -\delta \frac{(t+\alpha)(t+\beta)(t+\gamma)}{\alpha\beta\gamma} \label{dimX} \\
\dim Y(\alpha) &= \frac{t(2t-\beta)(2t-\gamma)(t+\beta)(t+\gamma)(2t-3\alpha)}%
{\alpha^2\beta\gamma(\alpha-\beta)(\alpha-\gamma)} \label{dimY}
\end{align}

\begin{prop}\label{prop:cas}
	The values of the Casimir are given by
	\begin{center}
		\begin{tabular}{cccccc}
			$\fg$ & $V$ & $X$ & $Y_\alpha$ & $Y_\beta$ & $Y_\gamma$ \\ \hline
			$t$ & $t$ & $2t$ & $(2t-\alpha)$ & $(2t-\beta)$ & $(2t-\gamma)$ \\			
		\end{tabular}
	\end{center}	
\end{prop}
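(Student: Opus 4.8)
The plan is to compute the scalar by which the Casimir acts on each summand of $\fg\otimes\fg$ by realising that summand as an eigenspace of the infinitesimal braiding. Write $H$ for the endomorphism of $\fg\otimes\fg$ given by the diagram in \eqref{eq:diagrams}; this is (up to the usual sign and normalisation conventions) the natural transformation $t$ of Section~\ref{sec:defq}, so that on a summand $W\subseteq\fg\otimes\fg$ the Casimir differs from twice the Casimir of $\fg$ by twice the eigenvalue of $H$ on $W$. Thus the whole statement reduces to (i) fixing the normalisation on $\fg$ and on the trivial summand, and (ii) computing the $H$-eigenvalue on each of $X$, $Y(\alpha)$, $Y(\beta)$, $Y(\gamma)$. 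For step (i) I would use relation~\eqref{triV}, which exhibits the bracket as an eigenvector of $H$ and so pins the value $t$ on $\fg=V$, together with relation~\eqref{digon}, which fixes the normalisation detected on the trivial summand. These two calibrations determine the affine relation between the Casimir and the $H$-eigenvalue once and for all.

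I would then treat the antisymmetric and symmetric parts separately. Because $H$ commutes with the symmetry (one reads $HX=XH=H-K$ from the multiplication table), it preserves $\bigwedge^2\fg$ and $S^2\fg$. On $\bigwedge^2\fg\cong\fg\oplus X$ the element $K$ is $2t$ times the projector onto the adjoint summand (from $K^2=2tK$ and $HK=tK$), so $\tfrac1{2t}K$ splits off $\fg$; feeding $\tfrac1{2t}K$ and the idempotent $\tfrac12(1-X)$ into the relations leaves a single remaining eigenvalue of $H$ on $X$, giving the entry $2t$ for $X$. This case is essentially formal once the projector $K$ is available.

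The substantive case is the symmetric part $S^2\fg\cong 1\oplus Y(\alpha)\oplus Y(\beta)\oplus Y(\gamma)$. Here $K$ acts as $0$ and the symmetry acts as $+1$, so I would restrict the cubic~\eqref{eq:H3} to $S^2\fg$, first excising the one-dimensional trivial summand (on which the $U$-term in \eqref{eq:H3} is supported), to obtain a monic cubic governing the eigenvalue of $H$ on $Y(\alpha)$, $Y(\beta)$, $Y(\gamma)$; translating through the calibration of the first paragraph turns this into a cubic for the Casimir. The main obstacle is exactly this last step: one must remove the trivial summand cleanly so that the $U$-contribution is accounted for, and then verify, using the substitution $t=\alpha+\beta+\gamma$, $s=\alpha\beta+\beta\gamma+\gamma\alpha$, $p=\alpha\beta\gamma$, that the resulting cubic factors so that the three Casimir eigenvalues are precisely $2t-\alpha$, $2t-\beta$, $2t-\gamma$ — i.e. that Vieta's relations for the cubic match the elementary symmetric functions of the parameters (with the correct powers of $2$ in the constant term, which is the delicate point). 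Finally I would check consistency by substituting these Casimir values, together with the eigenvalues on $\fg$ and $X$, into the dimension formulas \eqref{dimV}, \eqref{dimX} and \eqref{dimY}, which should reproduce the stated dimensions and thereby confirm the identification of the three symmetric summands with $Y(\alpha)$, $Y(\beta)$, $Y(\gamma)$.
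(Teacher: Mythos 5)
First, be aware that the paper does not actually prove Proposition~\ref{prop:cas}: the values are simply asserted, followed by the one-line remark that ``the Casimir is $2t-H$'', and the extended analogue (Proposition~\ref{prop:casx}) is justified only by consistency with the specialisation $\tau=t$ and with the examples of Section~\ref{sec:ex}. So your proposal attempts a derivation where the paper offers none, and its overall strategy --- write the Casimir on each summand of $\fg\otimes\fg$ as an affine function of the $H$-eigenvalue there, then read off those eigenvalues from the two-string centraliser algebra --- is exactly what the remark ``Casimir $=2t-H$'' is gesturing at. Your treatment of $\bigwedge^2\fg$ is correct and complete: $K=2t\,\pi_{\fg}$ with $HK=tK$ gives eigenvalue $t$ on $\fg$, and $H\bigl(\tfrac12(1-X)-\tfrac1{2t}K\bigr)=\tfrac12K-\tfrac12K=0$ gives eigenvalue $0$ on $X$, whence the entries $t$ and $2t$. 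One caveat on your step (i): the Casimir (equivalently the infinitesimal braiding) is not a morphism of the Vogel plane --- the paper itself says that defining it ``would be interesting'' --- so the calibration only makes sense after passing to a Lie-algebra realisation, which is where the coproduct identity you are implicitly invoking lives.

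The genuine gap is in the symmetric part, at exactly the point you flagged as delicate. Restricting \eqref{eq:H3} to $S^2\fg\ominus 1$ (where $U=K=0$ and $X=1$) gives $H^3-tH^2+sH+\tfrac p2=0$, whereas eigenvalues $\alpha,\beta,\gamma$ would satisfy $H^3-tH^2+sH-p=0$; the constant terms disagree, so Vieta fails and your verification does not close. Worse, \eqref{eq:H3} as printed is inconsistent with the multiplication table itself: restricting it to the summand $X$, where the table forces $H=0$ as above, yields $0=\tfrac p2(0-1-2)=-\tfrac{3p}{2}$. So either \eqref{eq:H3} contains an error (the quantised eigenvalue table, whose classical limit gives $H$ acting by $\alpha$ on $Y(\alpha)$ after rescaling the digon from $1$ to $2t$, strongly suggests the intended restriction to $S^2\ominus 1$ is $(H-\alpha)(H-\beta)(H-\gamma)=0$, i.e.\ the final term of \eqref{eq:H3} should be on the order of $\tfrac p2(X+1)$ together with a corrected multiple of $U$), or some additional input is required. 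Either way you cannot, as written, extract the last three columns of the table from \eqref{eq:H3}; you would need first to repair that relation, or else argue directly from the quantum eigenvalues and the Lie-algebra examples, which is in effect what the paper does.
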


The Casimir is $2t-H$. It would be interesting to also define the infinitesimal braiding.

\clearpage
\section{Quantisation of Vogel plane}\label{sec:planeq}
In this section we give the quantisation of the Vogel plane.
This was presented in \cite{MR2029689}. This is a finitely
presented ribbon category for which the presentation is a $q$-analogue of the presentation of the Vogel plane.

We introduce indeterminates $q^\alpha,q^\beta,q^\gamma$ and for integers $n_\alpha,n_\beta,n_\gamma$ we put
\begin{equation}\label{eq:qexp}
q^{n_\alpha\alpha+n_\beta\beta+n_\gamma\gamma}=
(q^{\alpha})^{n_\alpha}.(q^\beta)^{n_\beta}.(q^\gamma)^{n_\gamma}
\end{equation}
and we also use the notation
\begin{equation}\label{eq:qint}
[n_\alpha\alpha+n_\beta\beta+n_\gamma\gamma]=
q^{n_\alpha\alpha+n_\beta\beta+n_\gamma\gamma}-
q^{-n_\alpha\alpha-n_\beta\beta-n_\gamma\gamma}
\end{equation}
We will refer to these as quantum integers even though the standard definition has denominator $q-q^{-1}$.

The rational functions will all be linear combinations of rational functions in which the numerator and denominator are a product of quantum integers and the number of quantum integers in the numerator is at least the number of quantum integers in the denominator. In addition the rational functions are invariant under permutations of
the indeterminates $q^\alpha,q^\beta,q^\gamma$.

Rational functions of this form have a \emph{classical limit}. This can be defined analytically by regarding $q$ as a complex number and 
$q^\alpha,q^\beta,q^\gamma$ as powers of $q$ and then taking the limit $q\rightarrow 1$. Equivalently, this limit can be defined algebraically. Given a rational function in which the numerator and denominator are a product of quantum integers: if there are more quantum integers in the numerator than in the denominator then the classical limit is 0; otherwise, the classical limit is obtained by replacing each
$[n_\alpha\alpha+n_\beta\beta+n_\gamma\gamma]$ by
$n_\alpha\alpha+n_\beta\beta+n_\gamma\gamma$.

The most straightforward choice for the ring of scalars is the field of rational functions, $\bQ(q^\alpha,q^\beta,q^\gamma)$. The disadvantage of this choice of scalars is that it does not allow us to take specialisations. In particular, the classical limit is not given by a ring homomorphism. We conjecture that there is an integral form of the quantised Vogel plane, which is  a finitely presented ribbon category over a ring $K$. The intention is that $K$ has the following properties. First, $K$ is an integral domain whose field of fractions is 
$\bQ(q^\alpha,q^\beta,q^\gamma)$ and this specialisation gives the presentation in this section. Second the classical limit is given by a ring homomorphism and this specialisation gives the presentation of the Vogel plane. Each entry in Table~\ref{table:vogel} gives a homomorphism $K\rightarrow \bQ(q)$ by taking $q^\alpha,q^\beta,q^\gamma$ to be a power of the indeterminate $q$. Then the third property of $K$ is that each of these is, in fact, a homomorphism $K\rightarrow \bZ[q,q^{-1}]$.

The two string centraliser algebra still has dimension six but the
element $X$ is no longer available. In its place we have two elements,
the braid generator and its inverse.

The eigenvalues of the braiding $\sigma$ are given in Table~\ref{table:brq}.
\begin{figure}
	\begin{tabular}{cc|cccc}
		$\fg$ & $X$ & $1$ & $Y(\alpha)$ & $Y(\beta)$ & $Y(\gamma)$ \\ \hline
		$-q^{2t}$ & $-1$ & $q^{4t}$ & $q^{2\alpha}$ & $q^{2\beta}$ & $q^{2\gamma}$ 
	\end{tabular}
\caption{Eigenvalues of braid generator}\label{table:brq}
\end{figure}

The eigenvalues of $H$ are given by
\begin{center}
	\begin{tabular}{cccc}
	$1$ & $Y(\alpha)$ & $Y(\beta)$ & $Y(\gamma)$ \\ \hline
		 $1$ & $\frac{[\alpha][\beta+\gamma]}{[t][2\beta+2\gamma]}$ & $\frac{[\beta][\alpha+\gamma]}{[t][2\alpha+2\gamma]}$ & $\frac{[\gamma][\alpha+\beta]}{[t][2\alpha+2\beta]}$ 
	\end{tabular}
\end{center}
The eigenvalue associated to the representation $\fg$ is
\begin{equation}\label{eq:hk}
\frac{[\alpha][\beta+\gamma]}{[t][2\beta+2\gamma]} +\frac{[\beta][\alpha+\gamma]}{[t][2\alpha+2\gamma]} +\frac{[\gamma][\alpha+\beta]}{[t][2\alpha+2\beta]}
\end{equation}

The eigenvalue of $H$ associated to $X$ is
\begin{equation}\label{eq:hxq}
	- \frac{[\alpha][\beta][\gamma]}{[t]}
	\, \frac{[\beta+\gamma]}{[2\beta+2\gamma]}\, \frac{[\alpha+\gamma]}{[2\alpha+2\gamma]}\, \frac{[\alpha+\beta]}{[2\alpha+2\beta]}\,
\end{equation}

In terms of diagrams, we replace~\eqref{digon} by
\begin{equation}\label{digonq}
\begin{tikzpicture}[red,line width=2pt]
\draw[-] (0,0) circle(0.5);
\draw[-] (0,0.5) -- (0,1);
\draw[-] (0,-0.5) -- (0,-1);
\path (1.5,0) node[black] {$\displaystyle =$};
\draw[-,xshift=3cm] (90:1) -- (270:1);
\begin{pgfonlayer}{background}
\fill[fill=blue!30] (0,0) circle (1cm);
\fill[fill=blue!30] (3,0) circle (1cm);
\end{pgfonlayer}
\end{tikzpicture}\end{equation}
This is a choice of normalisation. In the classical limit this gives
a different normalisation to the one we used previously.

The constant $t$ in the relation~\eqref{triV} is replaced by~\eqref{eq:hk}.
The relations~\eqref{defd},~\eqref{tad},~\eqref{eq:orth} are unmodified.
The remaining relations which do not involve a crossing are~\eqref{eq:H3}
and~\eqref{triX}; these become significantly more complicated.

The antisymmetry relation~\eqref{AS} is replaced by the following pair of relations:
\begin{equation}\label{ASq}
\begin{tikzpicture}[red,line width=2pt]
\fill[fill=blue!30] (0,0) circle (1cm);
\draw (0,-0.4) to [out=120,in=270] (-0.3,0) to [out=90,in=-135] (45:1);
\fill[blue!30] (0,0.35) circle(0.2);
\draw (0,-0.4) to [out=60,in=270] (0.3,0) to [out=90,in=-45] (135:1);
\draw (0,-0.4) -- (270:1);
\end{tikzpicture}
\raisebox{0.9cm}{\,$\displaystyle =-q^{2t}$\,}
\begin{tikzpicture}[red,line width=2pt]
\fill[fill=blue!30] (0,0) circle (1cm);
\draw[-] (0,0) -- (45:1);
\draw[-] (0,0) -- (135:1);
\draw[-] (0,0) -- (270:1);
\end{tikzpicture}
\qquad
\begin{tikzpicture}[vector]
\fill[fill=blue!30] (0,0) circle (1cm);
\draw (0,-0.4) to [out=60,in=270] (0.3,0) to [out=90,in=-45] (135:1);
\fill[blue!30] (0,0.35) circle(0.2);
\draw (0,-0.4) to [out=120,in=270] (-0.3,0) to [out=90,in=-135] (45:1);
\draw (0,-0.4) -- (270:1);
\end{tikzpicture}
\raisebox{0.9cm}{\,$\displaystyle =-q^{-2t}$\,}
\begin{tikzpicture}[vector]
\fill[fill=blue!30] (0,0) circle (1cm);
\draw (0,0) -- (45:1);
\draw (0,0) -- (135:1);
\draw (0,0) -- (270:1);
\end{tikzpicture}
\end{equation}

The symmetry of the inner product is replaced by the following pair of relations:
\begin{equation}\label{eq:inn}
\begin{tikzpicture}[vector]
\fill[fill=blue!30] (0,0) circle (1cm);
\draw (-0.3,0) to [out=90,in=225] (45:1);
\fill[blue!30] (0,0.3) circle(0.2);
\draw (0.3,0) to [out=90,in=315] (135:1);
\draw (0.3,0) arc (360:180:0.3);
\end{tikzpicture}
\raisebox{0.9cm}{\,$\displaystyle =q^{4t}$\,}
\begin{tikzpicture}[vector]
\fill[fill=blue!30] (0,0) circle (1cm);
\draw (45:1) to [out=225,in=0] (0,0.1) to [out=180,in=315] (135:1);
\end{tikzpicture}
\qquad
\begin{tikzpicture}[vector]
\fill[fill=blue!30] (0,0) circle (1cm);
\draw (0.3,0) to [out=90,in=315] (135:1);
\fill[blue!30] (0,0.3) circle(0.2);
\draw (-0.3,0) to [out=90,in=225] (45:1);
\draw (0.3,0) arc (360:180:0.3);
\end{tikzpicture}
\raisebox{0.9cm}{\,$\displaystyle =q^{-4t}$\,}
\begin{tikzpicture}[red,line width=2pt]
\fill[fill=blue!30] (0,0) circle (1cm);
\draw (45:1) to [out=225,in=0] (0,0.1) to [out=180,in=315] (135:1);
\end{tikzpicture}
\end{equation}

The quantum dimensions are given by
\begin{equation}\label{dimVq}
\delta=-\frac{[2t-\alpha][2t-\beta][2t-\gamma]}{[\alpha][\beta][\gamma]}
\end{equation}
This gives~\eqref{dimV} in the classical limit.

\begin{multline}\label{dimXq}
\dim_q(X)=-\delta \frac{[t+\alpha][t+\beta][t+\gamma]}{[2\alpha][2\beta][2\gamma]}
\times \frac{[2\alpha+2\beta]}{[\alpha+\beta]}
\frac{[2\alpha+2\gamma]}{[\alpha+\gamma]}
\frac{[2\beta+2\gamma]}{[\beta+\gamma]}
\end{multline}
This gives~\eqref{dimX} in the classical limit.
\begin{equation}\label{dimYq}
\dim_q (Y(\alpha))=
\frac{[2t][2t-\alpha][2t-\beta][2t-\gamma][t+\beta][t+\gamma]}%
{[2\alpha][\alpha][\beta][\gamma][\alpha-\beta][\alpha-\gamma]}
\end{equation}
This gives~\eqref{dimY} in the classical limit.

Note that the dimension formulae in
\cite{MR2211533} can be interpreted as quantum dimensions.

The skein relation for the unoriented, framed braid group generators is given in Figure~\ref{skeinq}.
\begin{figure}\begin{multline*}
		\begin{tikzpicture}[vector]
		\fill[blue!30] (0,0) circle (1cm);
		\draw (45:1) -- (225:1);
		\fill[blue!30] (0,0) circle (0.25cm);
		\draw (135:1) -- (315:1);
		\end{tikzpicture}
		\raisebox{0.9cm}{\,$-$\,}
		\begin{tikzpicture}[vector]
		\fill[blue!30] (0,0) circle (1cm);
		\draw (135:1) -- (315:1);
		\fill[blue!30] (0,0) circle (0.25cm);
		\draw (45:1) -- (225:1);
		\end{tikzpicture}
		\raisebox{0.9cm}{\,$=$} \\
		\raisebox{0.9cm}{$[\alpha][\beta][\gamma]\frac{[t]}{[2t]}\,\Bigg($\,}
		\begin{tikzpicture}[red,line width=2pt]
		\fill[blue!30] (0,0) circle (1cm);
		\draw (45:1) .. controls (45:0.5) and (315:0.5) .. (315:1);
		\draw (135:1) .. controls (135:0.5) and (225:0.5) .. (225:1);
		\end{tikzpicture}
		\raisebox{0.9cm}{\,$-$\,}
		\begin{tikzpicture}[red,line width=2pt]
		\fill[blue!30] (0,0) circle (1cm);
		\draw (45:1) .. controls (45:0.5) and (135:0.5) .. (135:1);
		\draw (315:1) .. controls (315:0.5) and (225:0.5) .. (225:1);
		\end{tikzpicture}
		\raisebox{0.9cm}{\,$\Bigg)$} \\
		\raisebox{0.9cm}{$+\frac{[2\alpha+2\beta]}{[\alpha+\beta]}
			\frac{[2\alpha+2\gamma]}{[\alpha+\gamma]}
			\frac{[2\beta+2\gamma]}{[\beta+\gamma]}[t]$\,$\Bigg($\,}
		\begin{tikzpicture}
		\fill[fill=blue!30] (0,0) circle (1cm);
		\draw[vector]  (0.2,0) --  (45:1);
		\draw[vector] (-0.2,0) -- (135:1);
		\draw[vector] (-0.2,0) -- (225:1);
		\draw[vector]  (0.2,0) -- (315:1);
		\draw[adjoint] (-0.2,0) -- (0.2,0);
		\end{tikzpicture}
		\raisebox{0.9cm}{\,$-$\,}
		\begin{tikzpicture}
		\fill[fill=blue!30] (0,0) circle (1cm);
		\draw[vector] (0,0.2) -- (45:1);
		\draw[vector] (0,0.2) -- (135:1);
		\draw[vector] (0,-0.2) -- (225:1);
		\draw[vector] (0,-0.2) -- (315:1);
		\draw[adjoint] (0,-0.2) -- (0,0.2);
		\end{tikzpicture}
		\raisebox{0.9cm}{\,$\Bigg)$}
	\end{multline*}
	\caption{Skein relation}\label{skeinq}
\end{figure}

\subsection{Knot invariants}
The quantised Vogel plane is a ribbon category and so each object
gives rise to an invariant of framed links with values in $\End(I)$. This was the original motivation for introducing ribbon categories. This is described in \cite{MR1036112} and also follows from
the coherence theorem in \cite{MR1268782}. A natural question\footnote{which arose in a conversation with Vaselov} is whether the link invariant associated to $V$ can be computed in simple examples.
In this section we show that this link invariant can be computed
for the closures of two string braids.

The formula for the invariant of the closure of $\sigma^k$ is given in terms of the quantum dimensions and the eigenvalues in Table~\ref{table:brq} by the following expression:
\begin{multline}
(-1)^k\left(\delta\, q^{2kt}+\dim_q X\right) \\
+q^{4kt}+\dim_q(Y(\alpha))\, q^{2k\alpha}+\dim_q(Y(\beta))\, q^{2k\beta}
+\dim_q(Y(\gamma))\, q^{2k\gamma} \\
\end{multline}
This invariant is multiplicative under disjoint union of links.
It is more convenient to divide by $\delta$ to get an invariant
of non-empty links which is multiplicative under connected sum.

This is an invariant of framed links. The invariant of oriented links
is given by multiplying by $q^{4t\mathrm{writhe}}$. The writhe of
the closure of $\sigma^k$ is $k$.

\begin{ex}
For the Hopf link the invariant is given in Figure~\ref{fig:hopf}.
\end{ex}
\begin{figure}
\begin{multline*} [2\alpha+2\beta+2\gamma]
\left([2\alpha+2\beta]+[2\alpha+2\gamma]+[2\beta+2\gamma]\right) \\
-\frac{[2\alpha+2\beta+\gamma][2\alpha+\beta+2\gamma][\alpha+2\beta+2\gamma]}	{[\alpha][\beta][\gamma]}
\end{multline*}
	\caption{Universal invariant of Hopf link}\label{fig:hopf}
\end{figure}

\begin{ex}
	For the trefoil the invariant is the Laurent polynomial is given in Figure~\ref{fig:trefoil}.
\end{ex}
\begin{figure}
	\begin{gather*} 
-q^{8\alpha+8\beta+8\gamma}	\\
+q^{8\alpha+8\beta+6\gamma}
+q^{8\alpha+6\beta+8\gamma}
+q^{6\alpha+8\beta+8\gamma} \\
-q^{8\alpha+6\beta+6\gamma}
-q^{6\alpha+8\beta+8\gamma}
-q^{6\alpha+6\beta+8\gamma} \\
-q^{6\alpha+4\beta+4\gamma}
-q^{4\alpha+6\beta+4\gamma}
-q^{4\alpha+4\beta+6\gamma} \\
+q^{6\alpha+4\beta+2\gamma}
+q^{6\alpha+2\beta+4\gamma}
+q^{4\alpha+6\beta+2\gamma}
+q^{4\alpha+2\beta+6\gamma}
+q^{2\alpha+6\beta+4\gamma}
+q^{2\alpha+4\beta+6\gamma} \\
+3q^{4\alpha+4\beta+4\gamma} \\
-q^{4\alpha+4\beta+2\gamma}
-q^{4\alpha+2\beta+4\gamma}
-q^{2\alpha+4\beta+4\gamma} \\
+q^{4\alpha+2\beta+2\gamma}
+q^{2\alpha+4\beta+2\gamma}
+q^{2\alpha+2\beta+4\gamma} \\
-q^{4\alpha}
-q^{4\beta}
-q^{4\gamma} \\
-q^{2\alpha+2\beta}
-q^{2\alpha+2\gamma}
-q^{2\beta+2\gamma} \\
+q^{2\alpha}
+q^{2\beta}
+q^{2\gamma} \\
+q^{2\alpha-2\beta}
+q^{2\alpha-2\gamma}
+q^{2\beta-2\gamma}
+q^{-2\alpha+2\beta}
+q^{-2\alpha+2\gamma}
+q^{-2\beta+2\gamma} \\
-2\\
+q^{-4\alpha-4\beta}
+q^{-4\alpha-4\gamma}
+q^{-4\beta-4\gamma} \\
+q^{-4\alpha-2\beta-2\gamma}
+q^{-2\alpha-4\beta-2\gamma}
+q^{-2\alpha-2\beta-4\gamma} \\
+q^{-4\alpha-4\beta-4\gamma}
	\end{gather*}
	\caption{Universal invariant of trefoil}\label{fig:trefoil}
\end{figure}

These link polynomials are interpreted as the expectation value of observables in Chern-Simons theory. For a knot, the observable is the trace of the holonomy around the curve of a connection.
From this point of view, these formulae are a non-perturbative evaluation of these expectation values. The perturbative evaluation of these expectation values are the universal Chern-Simons formulae in \cite{MR3118578} and \cite{MR3006906}. It would be interesting to relate these two approaches.

\section{Extended Vogel plane}\label{sec:planex}
In this section we extend the Vogel plane. The extra coordinate is $\tau$.
The inclusion of the Vogel plane is given by the condition $\tau=t$.
This also corresponds to regarding a group $G$ as the symmetric space
$G\times G^{\mathrm{op}}/G$ where the involution on $G\times G^{\mathrm{op}}$
is given by $(g,h)\mapsto (h^{-1},g^{-1})$ and the inclusion of $G$ is given by
$g\mapsto (g,g^{-1})$.

The trivalent graphs are modified by introducing a second type of edge
and requiring that all vertices are of the form
\begin{equation}\begin{tikzpicture}
\fill[color=blue!20] (0,0) rectangle  (2,1);
\draw[vector] (0.5,1) to [out=270,in=120] (1,0.5);
\draw[vector] (1.5,1) to [out=270,in=60] (1,0.5);
\draw[adjoint] (1,.5) -- (1,0);
\end{tikzpicture}\end{equation}

The relation~\eqref{defd} is unchanged. The relation~\eqref{tad}
is replaced by
\begin{equation}\begin{tikzpicture}
\draw[vector] (0,0) circle(0.5);
\draw[adjoint] (0,0.5) -- (0,1);
\path (1.5,0) node[black] {$\displaystyle = 0$};
\begin{pgfonlayer}{background}
\fill[fill=blue!30] (0,0) circle (1cm);
\end{pgfonlayer}
\end{tikzpicture}\end{equation}

The anti-symmetry relation,~\eqref{AS}, is replaced by
\begin{equation}\label{ASx}\begin{tikzpicture}
\draw[vector] (0,-0.4) to [out=120,in=270] (-0.3,0) to [out=90,in=-135] (45:1);
\draw[vector] (0,-0.4) to [out=60,in=270] (0.3,0) to [out=90,in=-45] (135:1);
\draw[adjoint] (0,-0.4) -- (0,-1);
\path (1.5,0) node[black] {$\displaystyle =-$};
\draw[vector,xshift=3cm] (0,0) -- (45:1);
\draw[vector,xshift=3cm] (0,0) -- (135:1);
\draw[adjoint,xshift=3cm] (0,0) -- (0,-1);
\begin{pgfonlayer}{background}
\fill[fill=blue!30] (0,0) circle (1cm);
\fill[fill=blue!30] (3,0) circle (1cm);
\end{pgfonlayer}
\end{tikzpicture}\end{equation}

The Jacobi relation,~\eqref{IHX}, is replaced by
\begin{equation}\begin{tikzpicture}
\draw[vector]  (0.2,0) --  (45:1);
\draw[vector] (-0.2,0) -- (135:1);
\draw[vector] (-0.2,0) -- (225:1);
\draw[vector]  (0.2,0) -- (315:1);
\draw[adjoint] (-0.2,0) -- (0.2,0);
\path (1.5,0) node[black] {$\displaystyle =$};
\draw[vector,xshift=3cm] (0,0.2) -- (45:1);
\draw[vector,xshift=3cm] (0,0.2) -- (135:1);
\draw[vector,xshift=3cm] (0,-0.2) -- (225:1);
\draw[vector,xshift=3cm] (0,-0.2) -- (315:1);
\draw[adjoint,xshift=3cm] (0,-0.2) -- (0,0.2);
\path (4.5,0) node[black] {$\displaystyle +$};
\draw[vector,xshift=6cm] (-0.3,-0.3) -- (0.2,0.2) -- (45:1);
\draw[vector,xshift=6cm] (0.2,-0.2) -- (-0.2,0.2) -- (135:1);
\draw[vector,xshift=6cm] (-0.2,-0.2) -- (225:1);
\draw[vector,xshift=6cm] (0.2,-0.2) -- (315:1);
\draw[adjoint,xshift=6cm] (-0.3,-0.3) -- (0.3,-0.3);
\begin{pgfonlayer}{background}
\fill[fill=blue!30] (0,0) circle (1cm);
\fill[fill=blue!30] (3,0) circle (1cm);
\fill[fill=blue!30] (6,0) circle (1cm);
\end{pgfonlayer}
\end{tikzpicture}\end{equation}

The following two relations replace~\eqref{digon}
\begin{equation}\label{digonx1}\begin{tikzpicture}
\draw[vector] (0,0) circle(0.5);
\draw[adjoint] (0,0.5) -- (0,1);
\draw[adjoint] (0,-0.5) -- (0,-1);
\path (1.5,0) node[black] {$\displaystyle = 2\tau$};
\draw[adjoint,xshift=3cm] (90:1) -- (270:1);
\begin{pgfonlayer}{background}
\fill[fill=blue!30] (0,0) circle (1cm);
\fill[fill=blue!30] (3,0) circle (1cm);
\end{pgfonlayer}
\end{tikzpicture}\end{equation}

\begin{equation}\label{digonx2}
\begin{tikzpicture}
\fill[fill=blue!30]
(0,0) circle (1cm);
\draw[vector] (0,0.5) arc (90:270:0.5);
\draw[adjoint] (0,-0.5)arc (-90:90:0.5);
\draw[vector] (0,0.5) -- (0,1);
\draw[vector] (0,-0.5) -- (0,-1);
\end{tikzpicture}
\raisebox{0.9cm}{$\,= (\tau+t)\,$}
\begin{tikzpicture}
\fill[fill=blue!30] (0,0) circle (1cm);
\draw[vector] (90:1) -- (270:1);
\end{tikzpicture}
\raisebox{0.9cm}{$\,=\,$}
\begin{tikzpicture}
\fill[fill=blue!30] (0,0) circle (1cm);
\draw[adjoint] (0,0.5) arc (90:270:0.5);
\draw[vector] (0,-0.5)arc (-90:90:0.5);
\draw[vector] (0,0.5) -- (0,1);
\draw[vector] (0,-0.5) -- (0,-1);
\end{tikzpicture}
\end{equation}

The following replaces~\eqref{triV}
\begin{equation}
\begin{tikzpicture}
\draw[vector] (30:0.5) -- (30:1);
\draw[vector] (150:0.5) -- (150:1);
\draw[adjoint] (270:0.5) -- (270:1);
\draw[vector] (150:0.5) -- (270:0.5) -- (30:0.5);
\draw[adjoint] (30:0.5) -- (150:0.5);
\path (1.5,0) node[black] {$\displaystyle = \tau$};
\draw[vector,xshift=3cm] (0,0) -- (30:1);
\draw[vector,xshift=3cm] (0,0) -- (150:1);
\draw[adjoint,xshift=3cm] (0,0) -- (270:1);
\begin{pgfonlayer}{background}
\fill[fill=blue!30] (0,0) circle (1);
\fill[fill=blue!30] (3,0) circle (1cm);
\end{pgfonlayer}
\end{tikzpicture}\end{equation}

The following replaces~\eqref{triX}
\begin{equation}\begin{tikzpicture}
\draw[vector] (30:0.5) -- (30:1);
\draw[vector] (150:0.5) -- (150:1);
\draw[black] (270:0.5) -- (270:1);
\draw[vector] (150:0.5) -- (270:0.5) -- (30:0.5);
\draw[adjoint] (30:0.5) -- (150:0.5);
\path (1.5,0) node[black] {$\displaystyle = 0$};
\path (0,-1) node[black] {$X$};
\begin{pgfonlayer}{background}
\fill[fill=blue!30] (0,0) circle (1);
\end{pgfonlayer}
\end{tikzpicture}\end{equation}

The two string centraliser algebra has dimension six and we assume the following are a basis:
\[ 1, U, K, H, X, H^2 \]
These correspond to the diagrams
\begin{center}
	\begin{tabular}{cccccc}
		\begin{tikzpicture}[red,line width=2pt]
		\fill[fill=blue!30] (0,0) rectangle (1,1);
		\draw (0.25,1) -- (0.25,0);
		\draw (0.75,1) -- (0.75,0);
		\end{tikzpicture} &
		\begin{tikzpicture}[red,line width=2pt]
		\fill[fill=blue!30] (0,0) rectangle (1,1);
		\draw (0.75,0) arc (0:180:0.25);
		\draw (0.75,1) arc (0:-180:0.25);
		\end{tikzpicture} &
		\begin{tikzpicture}[red,line width=2pt]
		\fill[fill=blue!30] (0,0) rectangle (1,1);
		\draw (0.25,1) -- (0.5,0.65);
		\draw (0.75,1) -- (0.5,0.65);
		\draw (0.5,0.65)[adjoint] -- (0.5,0.35);
		\draw (0.25,0) -- (0.5,0.35);
		\draw (0.75,0) -- (0.5,0.35);
		\end{tikzpicture} &
		\begin{tikzpicture}[red,line width=2pt]
		\fill[fill=blue!30] (0,0) rectangle (1,1);
		\draw (0.25,1) -- (0.35,0.5);
		\draw (0.75,1) -- (0.65,0.5);
		\draw (0.65,0.5)[adjoint] -- (0.35,0.5);
		\draw (0.25,0) -- (0.35,0.5);
		\draw (0.75,0) -- (0.65,0.5);
		\end{tikzpicture} &
		\begin{tikzpicture}[red,line width=2pt]
		\fill[fill=blue!30] (0,0) rectangle (1,1);
		\draw (0.25,1) .. controls (0.25,0.5) and (0.75,0.5) .. (0.75,0);
		\draw (0.75,1) .. controls (0.75,0.5) and (0.25,0.5) .. (0.25,0);
		\end{tikzpicture} &
		\begin{tikzpicture}[red,line width=2pt]
		\fill[fill=blue!30] (0,0) rectangle (1,1);
		\draw (0.35,0.65) -- (0.35,0.35);
		\draw (0.65,0.65) -- (0.65,0.35);
		\draw (0.35,0.35)[adjoint] -- (0.65,0.35);
		\draw (0.35,0.65)[adjoint] -- (0.65,0.65);
		\draw (0.25,1) -- (0.35,0.65);
		\draw (0.75,1) -- (0.65,0.65);
		\draw (0.25,0) -- (0.35,0.35);
		\draw (0.75,0) -- (0.65,0.35);
		\end{tikzpicture} \\
		$1$ & $U$ & $K$ & $H$ & $X$ & $H^2$
	\end{tabular}
\end{center}

Then we have the following analogue of~\eqref{eq:H3}.
\begin{equation}
H^3=t\,H^2 -s\,H + \left(\frac s2 + \frac{\tau }{2}(\tau -t)\right) K%
 + \frac p2 (U+X-2)
\end{equation}

Then we also have an additional relation
\begin{multline}\label{eq:sq}
\begin{tikzpicture}
\fill[fill=blue!30] (0,0) circle (1cm);
\draw[vector] (45:0.5) -- (45:1);
\draw[vector] (135:0.5) -- (135:1);
\draw[vector] (225:0.5) -- (225:1);
\draw[vector] (315:0.5) -- (315:1);
\draw[vector] (45:0.5) -- (135:0.5);
\draw[adjoint] (135:0.5) -- (225:0.5);
\draw[vector] (225:0.5) -- (315:0.5);
\draw[adjoint] (315:0.5) --(45:0.5);
\end{tikzpicture}
\raisebox{0.9cm}{\,$-$\,}
\begin{tikzpicture}
\fill[fill=blue!30] (0,0) circle (1cm);
\draw[vector] (45:0.5) -- (45:1);
\draw[vector] (135:0.5) -- (135:1);
\draw[vector] (225:0.5) -- (225:1);
\draw[vector] (315:0.5) -- (315:1);
\draw[adjoint] (45:0.5) -- (135:0.5);
\draw[vector] (135:0.5) -- (225:0.5);
\draw[adjoint] (225:0.5) -- (315:0.5);
\draw[vector] (315:0.5) --(45:0.5);
\end{tikzpicture} \\
\raisebox{0.9cm}{\,$=(\tau-t)$\,$\Bigg($}
\begin{tikzpicture}
\fill[fill=blue!30] (0,0) circle (1cm);
\draw[vector]  (0.2,0) --  (45:1);
\draw[vector] (-0.2,0) -- (135:1);
\draw[vector] (-0.2,0) -- (225:1);
\draw[vector]  (0.2,0) -- (315:1);
\draw[adjoint] (-0.2,0) -- (0.2,0);
\end{tikzpicture}
\raisebox{0.9cm}{\,$-$\,}
\begin{tikzpicture}
\fill[fill=blue!30] (0,0) circle (1cm);
\draw[vector] (0,0.2) -- (45:1);
\draw[vector] (0,0.2) -- (135:1);
\draw[vector] (0,-0.2) -- (225:1);
\draw[vector] (0,-0.2) -- (315:1);
\draw[adjoint] (0,-0.2) -- (0,0.2);
\end{tikzpicture}
\raisebox{0.9cm}{$\Bigg)$}
\end{multline}

The dimension formulae which give~\eqref{dimV} under the specialisation
$\tau=t$ are:
\begin{align}\label{dimVx}
\delta &= (-\tau^3-2\tau^2t-\tau t^2-s\tau-s t+p)/p \\
&= -\frac{(\tau+t-\alpha)(\tau+t-\beta)(\tau+t-\gamma)}{\alpha\,\beta\,\gamma} \\
\dim(\fg) &=
\frac{(\tau+t)(\tau^3+2t\tau^2+(s+t^2)\tau+st-p)}{p\tau} \\
&=
-1/2\,{\frac { \left( \beta+\gamma+\tau \right)  \left( \alpha+\gamma+\tau
		\right)  \left( \alpha+\beta+\tau \right)  \left( \tau+t
		\right) }{\alpha\,\beta\,\gamma\,\tau}} \\
&= \delta\, \left( \frac{\tau+t}{2\tau} \right)
\end{align}
The dimension formulae which give~\eqref{dimX} under the specialisation
$\tau=t$ is:
\begin{equation}\label{dimXx}
\dim(X) =
-\delta/2\,{\frac { \left( \gamma+\tau \right)  \left( \beta+\tau \right) 
		\left( \tau+\alpha \right)  \left( \tau+t \right) }{
		\alpha\,\beta\,\gamma\,\tau}} 
\end{equation}
The dimension formulae which give~\eqref{dimY} under the specialisation
$\tau=t$ is:
\begin{equation}\label{dimYx}
1/2\,{\frac { \left( \alpha+\gamma+\tau \right)  \left( \alpha+\beta+\tau
		\right)  \left( \gamma+\tau \right)  \left( \beta+\tau \right)  \left( \tau+
		t \right)  \left( \tau+t-3\,\alpha \right) 
	}{{\alpha}^{2}\beta\,\gamma\, \left( \alpha-\gamma \right)  \left( 
	\alpha-\beta \right) }}
\end{equation}

\begin{prop}\label{prop:casx}
	The values of the Casimir are given by
\begin{center}
	\begin{tabular}{cccccc}
		$\fg$ & $V$ & $X$ & $Y_\alpha$ & $Y_\beta$ & $Y_\gamma$ \\ \hline
		$t$ & $(\tau+t)/2$ & $(\tau+t)$ & $(\tau+t-\alpha)$ & $(\tau+t-\beta)$ & $(\tau+t-\gamma)$ \\
		
	\end{tabular}
\end{center}	
\end{prop}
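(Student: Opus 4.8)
The plan is to follow the proof of Proposition~\ref{prop:cas}, replacing the classical normalisation $C=2t-H$ by its extended analogue. Write $C$ for the Casimir of $\fg=L$, viewed as a natural endomorphism of the identity functor; it acts by a scalar on $V$ and on every summand of $V\otimes V$. The first step is to pin down its value $c_V$ on $V$ itself. The digon relation~\eqref{digonx2} evaluates the composite $V\to L\otimes V\to V$ (split off an adjoint edge and act back) to $(\tau+t)\,\id_V$; since this composite is twice the Casimir on $V$, we get $c_V=(\tau+t)/2$, which is the entry for $V$ and specialises to the classical $c_V=t$ when $\tau=t$.

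Next I would establish that on $V\otimes V$ the Casimir takes the form $C=(\tau+t)-H$, where $H$ is the H-shaped diagram. This is the extended form of the identity $C=2t-H$ used before Proposition~\ref{prop:cas}: the Casimir of a tensor square is the sum of the two single-factor Casimirs together with twice the polarised Casimir, and the latter is a fixed multiple of the infinitesimal braiding $H$. The additive constant and the coefficient of $H$ are then fixed by two normalisations, namely the value $c_V=(\tau+t)/2$ just obtained and the requirement that $C$ vanish on the trivial summand of $V\otimes V$, on which $H$ acts by $\tau+t$. Granting the identity, it remains only to read off the eigenvalue $h_W$ of $H$ on each summand $W$, since then $C_W=(\tau+t)-h_W$.

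For the eigenvalues I would argue exactly as in the classical case, using the centraliser-algebra relations of the extended plane. The relation $HX=H-K$, a consequence of the antisymmetry relation~\eqref{ASx}, together with $X=-1$ and $K=0$ on the summand $X\subset\bigwedge^2V$, forces $h_X=0$, hence $C_X=\tau+t$. The digon relation~\eqref{digonx1} evaluates the adjoint bubble to $2\tau$, giving $K^2=2\tau K$ and $HK=\tau K$ (the latter from the triangle relation replacing~\eqref{triV}); therefore $H$ acts by $\tau$ on the copy of $L$ inside $\bigwedge^2V$, and $C_L=(\tau+t)-\tau=t$. On the symmetric part $S^2V=1\oplus Y(\alpha)\oplus Y(\beta)\oplus Y(\gamma)$ the eigenvalues of $H$ are $\alpha,\beta,\gamma$: this is the defining property of the parameters together with the vanishing conditions $\Hom(Y(\cdot)\otimes L,Y(\cdot))=0$, encoded in the extended cubic for $H$ (the analogue of~\eqref{eq:H3} carrying the extra term $\tfrac{\tau}{2}(\tau-t)K$). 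This yields $C_{Y(\alpha)}=\tau+t-\alpha$ and its permutations, and the trivial summand contributes $C=0$ automatically, completing the table.

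The main obstacle is the bookkeeping of the normalisation: proving cleanly that the polarised Casimir is the correct multiple of $H$ and that~\eqref{digonx2} computes $2c_V$ rather than $c_V$, since this single factor of two is exactly what separates the entry $(\tau+t)/2$ for $V$ from the entry $\tau+t$ for $X$. A secondary difficulty lies in the $Y$-summands: one must check that the extra term $\tfrac{\tau}{2}(\tau-t)K$ in the extended cubic is precisely what is needed so that the three symmetric eigenvalues remain $\alpha,\beta,\gamma$, independent of $\tau$, consistently with the uniform shift $2t\mapsto\tau+t$ across the whole table. Specialising to $\tau=t$ to recover Proposition~\ref{prop:cas} is a useful consistency check at each stage.
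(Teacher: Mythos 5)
Your proposal takes a genuinely different route from the paper. The paper's proof of Proposition~\ref{prop:casx} is a single sentence: the values were \emph{found} by requiring that the specialisation $\tau=t$ recover Proposition~\ref{prop:cas} and that they fit the Casimirs computed case-by-case in Section~\ref{sec:ex}; no derivation from the presentation is attempted (and the specialisation $\tau=t$ alone cannot fix the $\tau$-dependence, so the examples are doing the real work). You instead try to derive the table from the diagrammatic relations, and for the entries $1$, $\fg$ and $X$ your argument is sound and genuinely adds content: $HU=(\tau+t)U$ from~\eqref{digonx2}, $K^2=2\tau K$ from~\eqref{digonx1} together with $HK=\tau K$ from the triangle relation replacing~\eqref{triV}, and $HX=H-K$ from the STU relation force $h_1=\tau+t$, $h_{\fg}=\tau$, $h_X=0$, whence $C=(\tau+t)-H$ yields $0$, $t$ and $\tau+t$ as claimed.

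Two of your steps, however, are not actually available from the paper's axioms. First, the overall normalisation --- that the digon of~\eqref{digonx2} computes \emph{twice} the tabulated Casimir of $V$, equivalently that $C=(\tau+t)-H$ rather than some other affine function of $H$ with the same value on the trivial summand --- is a choice of invariant bilinear form that the presentation does not determine; you flag this yourself, and the paper resolves it only by calibration against the example tables, which is exactly what its proof says. Second, and more seriously, the eigenvalues $\alpha,\beta,\gamma$ of $H$ on the $Y$-summands do not follow from the printed cubic: restricting~\eqref{eq:H3} (or its extended analogue) to $Y(\mu)$, where $K=U=0$ and $X=+1$, gives $h^3-th^2+sh+\tfrac{p}{2}=0$, whose roots are not the roots of $(h-\alpha)(h-\beta)(h-\gamma)=h^3-th^2+sh-p$. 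So either the cubic as printed needs correcting, or --- as the structure of the paper suggests --- the statement that $H$ acts by $\alpha,\beta,\gamma$ on $S^2V\ominus 1$ is in effect the \emph{definition} of the parameters in the extended setting, verified against Section~\ref{sec:ex} rather than deduced. Your plan is the right one for upgrading the proposition from an empirical observation to a theorem, but as written it assumes precisely the two facts that the paper itself only establishes by inspection of the examples.
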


\begin{proof} These values were found from the properties that the specialisation $\tau=t$ gives
Proposition~\ref{prop:cas} and that it fits with the examples in Section~\ref{sec:ex}.
\end{proof}

\section{Examples}\label{sec:ex}
In these three examples we have a series. These involve classical
groups and the dimension formulae are given in \cite{MR0271271}.

In the first two examples $V\cong Y_\alpha$
since $V$ has a symmetric multiplication.

These two series are in fact a single line since they are interchanged by
the change of variable $n\leftrightarrow -n$; this is discussed in
\cite{MR2858070}.

\subsection{AI}
Take $\fg=\fso(n)$. For $n=2r+1$ this has type $B_r$ and for $n=2r$ this has type $D_r$. In both cases take $V$ to have highest weight $2\omega_1$, so
$V\oplus 1$ is the symmetric square of the vector representation. Then we have $\fso(n)\oplus V\fsl(n)$ has type $A_{2r}$.

The symmetric multiplication can be seen by regarding $V$ as the special Jordan
algebra of trace-free symmetric matrices.

Then indexing irreducible representations
by highest weight and by partitions we have
\begin{center}
	\begin{tabular}{cccccc}
$\fg$ & $V$ & $X$ & $Y_\alpha$ & $Y_\beta$ & $Y_\gamma$ \\ \hline
$\omega_2$ &  $2\omega_1$ & $2\omega_1+\omega_2$ & $2\omega_1$ & $2\omega_2$ & $4\omega_1$ \\
$[1,1]$ & $[2]$ & $[3,1]$ & $[2]$ & $[2,2]$ & $[4]$ 
	\end{tabular}
\end{center}
Then we also have the following dimension formulae
\begin{align*}
	\dim \fg &= \frac 12 n(n-1) \\
	\dim V &= \frac 12 (n-1)(n+2) \\
	\dim X &= \frac18 (n-2)(n-1)(n+1)(n+4) \\
	\dim Y_\alpha &= \frac 12 (n-1)(n+2) \\
	\dim Y_\beta &= \frac1{12} (n-3)n(n+1)(n+2)\\
	\dim Y_\gamma &= \frac1{24} (n-1)n(n+1)(n+6)\\
\end{align*}
The Casimirs are given by
\begin{center}
	\begin{tabular}{cccccc}
		$\fg$ & $V$ & $X$ & $Y_\alpha$ & $Y_\beta$ & $Y_\gamma$ \\ \hline
		$2n-4$ & $2n$ & $4n$ & $2n$ & $4n+8$ & $4n-4$
	\end{tabular}
\end{center}

This gives
\begin{center}
	\begin{tabular}{ccccc}
		$\tau$ & $t$ & $\alpha$ & $\beta$ & $\gamma$ \\ \hline
		$n+2$ & $n-2$ & $n$ & 2 & -4 
	\end{tabular}
\end{center}
\subsection{AII}
Take $\fg=\fsp(2r)$ of type $C_r$ and $V$ to have highest
weight $\omega_2$. Then $\fg\oplus V$ has type $A_{2r-1}$.

We also have $\fsl(2r)\cong \fsp(2r)\oplus V$ where $V\oplus 1$
is the exterior square of the vector representation. 

Then indexing irreducible representations by highest weight and 
by partitions we have
\begin{center}
	\begin{tabular}{cccccc}
		$\fg$ & $V$ & $X$ & $Y_\alpha$ & $Y_\beta$ & $Y_\gamma$ \\ \hline
		$2\omega_1$ & $\omega_2$ & $\omega_1+\omega_3$ & $\omega_2$ & $2\omega_2$ & $\omega_4$ \\
		$[2]$ & $[1,1]$ & $[2,1,1]$ & $[1,1]$ & $[2,2]$ & $[1^4]$ 
	\end{tabular}
\end{center}
Putting $n=2r$, we also have the following dimension formulae
\begin{align*}
	\dim \fg &= \frac 12 n(n+1) \\
	\dim V &= \frac 12 (n-2)(n+1) \\
	\dim X &= \frac18 (n-4)(n-2)(n-1)(n+1) \\
	\dim Y_\alpha &= \frac 12 (n-2)(n+1) \\
	\dim Y_\beta &= \frac1{12} n(n-2)(n+3)(n-1)\\
	\dim Y_\gamma &= \frac1{24} n(n-6)(n-1)(n+1)
\end{align*}

The Casimirs are given by
\begin{center}
	\begin{tabular}{cccccc}
		$\fg$ & $V$ & $X$ & $Y_\alpha$ & $Y_\beta$ & $Y_\gamma$ \\ \hline
		$2n+4$ & $2n$ & $4n$ & $2n$ & $4n+4$ & $4n-8$
	\end{tabular}
\end{center}

This gives
\begin{center}
	\begin{tabular}{ccccc}
		$\tau$ & $t$ & $\alpha$ & $\beta$ & $\gamma$ \\ \hline
		$n-2$ & $n+2$ & $n$ & -2 & 4 
	\end{tabular}
\end{center}
It is an open problem to give an interpretation for $n$ odd.
Presumably this is related to the odd symplectic groups.
\subsection{BDI}
Take $\fg=\fso(2n+1)$ of type $B_n$ and $V$ to have highest
weight $\omega_1$. Then $\fg\oplus V$ has type $D_{n+1}$.
Then $X$, $Y_\beta$ and $Y_\gamma$ are all zero. This is reminiscent
of the $SL(2)$ line in the Vogel plane.
Then indexing irreducible representations by highest weight and
by partitions we have
\begin{center}
	\begin{tabular}{ccc}
		$\fg$ & $V$ & $Y_\alpha$  \\ \hline
		$\omega_2$ & $\omega_1$ & $2\omega_1$ \\
		$[1,1]$ & $[1]$ & $[2]$ 
	\end{tabular}
\end{center}
Then we also have the following dimension formulae
\begin{align*}
	\dim \fg &= \frac 12 n(n-1) \\
	\dim V &= n \\
	\dim Y_\alpha &= \frac 12 (n-1)(n+2) \\
\end{align*}

This gives
\begin{center}
	\begin{tabular}{ccccc}
		$\tau$ & $t$ & $\alpha$ & $\beta$ & $\gamma$ \\ \hline
		$1$ & $n-2$ & $-1$ & $\beta$ & $n-\beta-1$
	\end{tabular}
\end{center}

\subsection{Exceptional series}
These examples can be considered as an analogue of the exceptional series
since we have $Y(\gamma) \cong 0$. The equation of this plane is
\[ \tau+\alpha+\beta-2\gamma=0\]
\subsubsection{FII} This is based on $F_4 \cong B_4 \oplus V$.
The highest weights, dimensions and Casimirs are given by
\begin{center}
	\begin{tabular}{ccccc}
		$\fg$ & $V$ & $X$ & $Y_\alpha$ & $Y_\beta$ \\ \hline
		$\omega_2$ & $\omega_4$ & $\omega_3$ & $\omega_1$ & $2\omega_4$ \\
		36 & 16 & 84 & 9 & 126 \\
		14 & 9 & 18 & 8 & 20
	\end{tabular}
\end{center}
This gives
\begin{center}
	\begin{tabular}{ccccc}
		$\tau$ & $t$ & $\alpha$ & $\beta$ & $\gamma$ \\ \hline
		$2$ & $7$ & $-1$ & $3$ & $5$ \\
	\end{tabular}
\end{center}
\subsubsection{EIV} This is based on $E_6 \cong F_4 \oplus V$.
In this example we again have $V\cong Y_\alpha$ and the symmetric
multiplication arises from regarding $V$ as an exceptional Jordan algebra.

The highest weights, dimensions and Casimirs are given by
\begin{center}
	\begin{tabular}{ccccc}
		$\fg$ & $V$ & $X$ & $Y_\alpha$ & $Y_\beta$ \\ \hline
		$\omega_1$ & $\omega_4$ & $\omega_3$ & $\omega_4$ & $2\omega_4$ \\
		52 & 26 & 273 & 26 & 324 \\
		18 & 12 & 24 & 12 & 26
	\end{tabular}
\end{center}
This gives
\begin{center}
	\begin{tabular}{ccccc}
		$\tau$ & $t$ & $\alpha$ & $\beta$ & $\gamma$ \\ \hline
		$3$ & $9$ & $6$ & $-1$ & $4$ \\
	\end{tabular}
\end{center}
\subsubsection{EI} This is based on $E_6 \cong C_4 \oplus V$.
The highest weights, dimensions and Casimirs are given by
\begin{center}
	\begin{tabular}{ccccc}
		$\fg$ & $V$ & $X$ & $Y_\alpha$ & $Y_\beta$ \\ \hline
		$2\omega_1$ & $\omega_4$ & $2\omega_3$ & $2\omega_4$ & $2\omega_2$ \\
		36 & 42 & 825 & 594 & 308 \\
		10 & 12 & 24 & 28 & 18
	\end{tabular}
\end{center}
This gives
\begin{center}
	\begin{tabular}{ccccc}
		$\tau$ & $t$ & $\alpha$ & $\beta$ & $\gamma$ \\ \hline
		$7$ & $5$ & $-2$ & $3$ & $4$ \\
	\end{tabular}
\end{center}
\subsubsection{EV} This is based on $E_7 \cong A_7 \oplus V$.
The highest weights, dimensions and Casimirs are given by
\begin{center}
	\begin{tabular}{ccccc}
		$\fg$ & $V$ & $X$ & $Y_\alpha$ & $Y_\beta$ \\ \hline
$\omega_1+\omega_7$ & $\omega_4$ & $\omega_3+\omega_5$ & $2\omega_4$ & $\omega_2+\omega_6$ \\
		63 & 70 & 2352 & 1764 & 720 \\
		16 & 18 & 36 & 40 & 28
	\end{tabular}
\end{center}
This gives
\begin{center}
	\begin{tabular}{ccccc}
		$\tau$  & $t$ & $\alpha$ & $\beta$ & $\gamma$ \\ \hline
		$5$  & $4$ & $-1$ & $2$ & $3$ \\
	\end{tabular}
\end{center}
\subsubsection{EVIII} This is based on $E_8 \cong D_8 \oplus V$.
The highest weights, dimensions and Casimirs are given by
\begin{center}
	\begin{tabular}{ccccc}
		$\fg$ & $V$ & $X$ & $Y_\alpha$ & $Y_\beta$ \\ \hline
		$\omega_2$ & $\omega_8$ & $\omega_6$ & $2\omega_8$ & $\omega_4$ \\
		120 & 128 & 8008 & 6435 & 1820 \\
		28 & 30 & 60 & 64 & 48 
	\end{tabular}
\end{center}
This gives
\begin{center}
	\begin{tabular}{ccccc}
		$\tau$  & $t$ & $\alpha$ & $\beta$ & $\gamma$ \\ \hline
		$8$ & $7$ & $-1$ & $3$ & $5$ \\
	\end{tabular}
\end{center}

\section{Quantisation of extended Vogel plane}\label{sec:planexq}
In this section we give the quantisation of the extended Vogel plane.
This is a finitely presented ribbon category for which the presentation is a $q$-analogue of the presentation of the extended Vogel plane.

We introduce an additional indeterminate $q^\tau$. We extend the notation of \eqref{eq:qexp}
by putting
\begin{equation}
q^{n_\tau\tau+n_\alpha\alpha+n_\beta\beta+n_\gamma\gamma}=
(q^{\tau})^{n_\tau}.(q^{\alpha})^{n_\alpha}.(q^\beta)^{n_\beta}.(q^\gamma)^{n_\gamma}
\end{equation}
for integers $n_\tau,n_\alpha,n_\beta,n_\gamma$.
We also use the notation of \eqref{eq:qint} by putting
\begin{equation}
[n_\tau\tau+n_\alpha\alpha+n_\beta\beta+n_\gamma\gamma]=
q^{n_\tau\tau+n_\alpha\alpha+n_\beta\beta+n_\gamma\gamma}-
q^{-n_\tau\tau-n_\alpha\alpha-n_\beta\beta-n_\gamma\gamma}
\end{equation}

The rational functions in this section then have similar properties to the rational
functions in section \ref{sec:planeq}. In particular, they have a classical limit.

\subsection{Exceptional plane}

The main result of \cite{MR1815266} is that five dimensional representations of
the three string braid group are determined by their eigenvalues. The exceptional
plane gives a five dimensional representation with eigenvalues given by
Table~\ref{table:brqx}. The matrices representing the generators $\sigma_1$ and
$\sigma_2$ can be given explicitly. The braid matrix, $\sigma_1$, is given in
Figure~\ref{fig:br5}. This matrix has been scaled so that the determinant is 1.

\begin{figure}
	\begin{equation*}
	\left[ \begin {array}{ccccc}  q^{-4\gamma}&0&0&0&0\\
	\noalign{\medskip}-\frac { [2\alpha+2\beta]}{[\alpha+\beta]}
	\frac { [2\beta+2\gamma]}{[\beta+\gamma]}q^{\alpha+\beta-2\gamma}
	&-q^{2\alpha+2\beta-2\gamma}&0&0&0\\
	\noalign{\medskip}{\frac { [2\beta+2\gamma]}{[\beta+\gamma]}q^{\beta-\gamma}}
	& q^{\beta}
	&  q^{-2\alpha+2\gamma} &0&0\\
	\noalign{\medskip}\frac { [2\alpha+2\beta]}{[\alpha+\beta]}
	\frac { [2\beta+2\gamma]}{[\beta+\gamma]}q^{-\alpha+\beta+\gamma}
	& (\sigma_1)_{4,2}		
	&(\sigma_1)_{4,3}
	&-  q^{2\gamma}  &0\\
	\noalign{\medskip} q^{2\beta+2\gamma} 
	&  q^{\beta+2\gamma}
	&\frac {[3\alpha-3\gamma]}{[\alpha-\gamma]}q^{-2\gamma}
	&- q^{-\beta+2\gamma}
	& q^{-2\beta+2\gamma}\end {array}
	\right] 
	\end{equation*}
	\begin{equation*}
	(\sigma_1)_{4,2} = q^{2\beta+2\gamma} +  1+  q^{-2\alpha+2\gamma} \qquad
	(\sigma_1)_{4,3} = \frac {[3\alpha-3\gamma]}{[\alpha-\gamma]}
	\frac{[2\alpha+2\beta]}{[\alpha+\beta]}q^{-2\alpha+2\gamma}
	\end{equation*}
	\caption{Braid matrix}\label{fig:br5}
\end{figure}

Also, the inverse of $\sigma_1$ is obtained by applying the
bar involution to each entry. The bar involution is $q^\alpha\leftrightarrow q^{-\alpha}$,
$q^\beta\leftrightarrow q^{-\beta}$, $q^\gamma\leftrightarrow q^{-\gamma}$.

Define the involution $P$ by $P_{ij}=\delta_{i+j,6}$. Then put $\sigma_2=P\sigma_1P$.
These satisfy the relation $\sigma_1\sigma_2\sigma_1=P$ and this implies
that the braid relation $\sigma_1\sigma_2\sigma_1=\sigma_2\sigma_1\sigma_2$.

Since this representation is given explicitly the coefficients we are interested in can
be computed directly. In particular, to compute the quantum dimensions, we first take
the spectral decomposition of $sigma_1$, so
\begin{equation}
\sigma_1 = \sum_{i} \rho^{(i)}E_1^{(i)}
\end{equation}
The idempotent, $E_1$, corresponding to the eigenvalue $q^{2\tau+2t}$ projects to the trivial
representation. Defining $E_2$ similarly we compute $\delta$ from $E_1E_2E_1=\delta^2E_1$.
The elements $U_i=\delta E_i$ then satisfy the Temperley-Lieb relations. The quantum 
dimensions can then be computed using
\begin{equation}
\delta U_2 E_1^{(i)} U_2 = \dim_q V^{(i)} U_2
\end{equation}
\subsection{Extended plane}
The eigenvalues of the braiding $\sigma$ are given in Table~\ref{table:brqx}.
This is a consequence of Proposition~\ref{prop:cas}.
\begin{figure}
	\begin{tabular}{cc|cccc}
		$\fg$ & $X$ & $1$ & $Y(\alpha)$ & $Y(\beta)$ & $Y(\gamma)$ \\ \hline
		$-q^{2\tau}$ & $-1$ & $q^{2\tau+2t}$ & $q^{2\alpha}$ & $q^{2\beta}$ & $q^{2\gamma}$ 
	\end{tabular}
	\caption{Eigenvalues of braid generator}\label{table:brqx}
\end{figure}
This gives Table~\ref{table:brq} under the specialisation $\tau=t$.

The antisymmetry relations~\eqref{ASq} and~\eqref{ASq} are replaced by
\begin{equation}\label{ASqx}
\begin{tikzpicture}[red,line width=2pt]
\fill[fill=blue!30] (0,0) circle (1cm);
\draw[vector] (0,-0.4) to [out=120,in=270] (-0.3,0) to [out=90,in=-135] (45:1);
\fill[blue!30] (0,0.35) circle(0.2);
\draw[vector] (0,-0.4) to [out=60,in=270] (0.3,0) to [out=90,in=-45] (135:1);
\draw[adjoint] (0,-0.4) -- (0,-1);
\end{tikzpicture}
\raisebox{0.9cm}{$\,=q^{2\tau}\,$}
 \begin{tikzpicture}[red,line width=2pt]
 \fill[fill=blue!30] (0,0) circle (1cm);
\draw[vector] (0,0) -- (45:1);
\draw[vector] (0,0) -- (135:1);
\draw[adjoint] (0,0) -- (270:1);
\end{tikzpicture}
\qquad
\begin{tikzpicture}
\fill[fill=blue!30] (0,0) circle (1cm);
\draw[vector] (0,-0.4) to [out=60,in=270] (0.3,0) to [out=90,in=-45] (135:1);
\fill[blue!30] (0,0.35) circle(0.2);
\draw[vector] (0,-0.4) to [out=120,in=270] (-0.3,0) to [out=90,in=-135] (45:1);
\draw[adjoint] (0,-0.4) -- (0,-1);
\end{tikzpicture}
\raisebox{0.9cm}{$\,=q^{-2\tau}\,$}
\begin{tikzpicture}[red,line width=2pt]
\fill[fill=blue!30] (0,0) circle (1cm);
\draw[vector] (0,0) -- (45:1);
\draw[vector] (0,0) -- (135:1);
\draw[adjoint] (0,0) -- (270:1);
\end{tikzpicture}
\end{equation}

The inner product relation~\eqref{eq:inn} is replaced by
\begin{equation}
\begin{tikzpicture}[vector]
\fill[fill=blue!30] (0,0) circle (1cm);
\draw (-0.3,0) to [out=90,in=225] (45:1);
\fill[blue!30] (0,0.3) circle(0.2);
\draw (0.3,0) to [out=90,in=315] (135:1);
\draw (0.3,0) arc (360:180:0.3);
\end{tikzpicture}
\raisebox{0.9cm}{$\,=q^{2\tau+2t}\,$}
\begin{tikzpicture}[red,line width=2pt]
\fill[fill=blue!30] (0,0) circle (1cm);
\draw (45:1) to [out=225,in=0] (0,0.1) to [out=180,in=315] (135:1);
\end{tikzpicture}
\qquad
\begin{tikzpicture}[red,line width=2pt]
\fill[fill=blue!30] (0,0) circle (1cm);
\draw (0.3,0) to [out=90,in=315] (135:1);
\fill[blue!30] (0,0.3) circle(0.2);
\draw (-0.3,0) to [out=90,in=225] (45:1);
\draw (0.3,0) arc (360:180:0.3);
\end{tikzpicture}
\raisebox{0.9cm}{$\,=q^{-2\tau-2t}\,$}
\begin{tikzpicture}[red,line width=2pt]
\fill[fill=blue!30] (0,0) circle (1cm);
\draw (45:1) to [out=225,in=0] (0,0.1) to [out=180,in=315] (135:1);
\end{tikzpicture}
\end{equation}

The relation~\eqref{digonx2} is replaced by
\begin{equation}\label{digonxq2}
\begin{tikzpicture}
\fill[fill=blue!30]
(0,0) circle (1cm);
\draw[vector] (0,0.5) arc (90:270:0.5);
\draw[adjoint] (0,-0.5)arc (-90:90:0.5);
\draw[vector] (0,0.5) -- (0,1);
\draw[vector] (0,-0.5) -- (0,-1);
\end{tikzpicture}
\raisebox{0.9cm}{$\,= \,$}
\begin{tikzpicture}
\fill[fill=blue!30] (0,0) circle (1cm);
\draw[vector] (90:1) -- (270:1);
\end{tikzpicture}
\raisebox{0.9cm}{$\,=\,$}
\begin{tikzpicture}
\fill[fill=blue!30] (0,0) circle (1cm);
\draw[adjoint] (0,0.5) arc (90:270:0.5);
\draw[vector] (0,-0.5)arc (-90:90:0.5);
\draw[vector] (0,0.5) -- (0,1);
\draw[vector] (0,-0.5) -- (0,-1);
\end{tikzpicture}
\end{equation}
This is a choice of normalisation of the trivalent vertex.

The coefficient $2\tau$ in the relation~\eqref{digonx1} is replaced by
\begin{multline}
\frac{[\alpha+\beta]}{[2\alpha+2\beta]}
\frac{[\alpha+\gamma]}{[2\alpha+2\gamma]}
\frac{[\beta+\gamma]}{[2\beta+2\gamma]} \\
\frac{[2\tau-2\alpha]}{[\tau-\alpha]}
\frac{[2\tau-2\beta]}{[\tau-\beta]}
\frac{[2\tau-2\gamma]}{[\tau-\gamma]} \\
\frac{[2\tau-t]}{[4\tau-2t]}
\frac{[2t]}{[t]}
\frac{[2\tau]}{[\tau+t]}
\end{multline}

The eigenvalues of $H$ are given by
\begin{center}
	\begin{tabular}{cccc}
		$1$ & $Y(\alpha)$ & $Y(\beta)$ & $Y(\gamma)$ \\ \hline
		$1$ & $\frac{[\alpha][\beta+\gamma][2\tau]}{[\tau][2\beta+2\gamma][\tau+t]}$ & $\frac{[\beta][\alpha+\gamma][2\tau]}{[\tau][2\alpha+2\gamma][\tau+t]}$ & $\frac{[\gamma][\alpha+\beta][2\tau]}{[\tau][2\alpha+2\beta][\tau+t]}$ 
	\end{tabular}
\end{center}

The eigenvalue of $H$ associated to $X$ is
\begin{equation}
- \frac{[\alpha][\beta][\gamma]}{[\tau+t]}
\, \frac{[\beta+\gamma]}{[2\beta+2\gamma]}\, \frac{[\alpha+\gamma]}{[2\alpha+2\gamma]}\, \frac{[\alpha+\beta]}{[2\alpha+2\beta]}\,
\frac{[2\tau]}{[\tau]}
\end{equation}
This gives~\eqref{eq:hxq} under the specialisation $q^\tau=q^t$.

The eigenvalue of $H$ associated to $\fg$ is
\begin{multline}
\frac 1{[\tau+t]}\, \frac{[2\tau-t]}{[4\tau-2t]}\, \frac{[2\tau]}{[\tau]}\,
\frac{[\alpha+\beta]}{[2\alpha+2\beta]}\,
\frac{[\alpha+\gamma]}{[2\alpha+2\gamma]}\,
\frac{[\beta+\gamma]}{[2\beta+2\gamma]} \\
\times \Bigg(-2[\tau]-[2t]+[2\alpha+2\beta]+[2\alpha+2\gamma]+[2\beta+2\gamma]
-[2\alpha]-[2\beta]-[2\gamma] \\
+[2\tau-2\alpha]+[2\tau-2\beta]+[2\tau-2\gamma]
+[2\tau+2\alpha]+[2\tau+2\beta]+[2\tau+2\gamma]
\Bigg)
\end{multline}
I have not (yet) found a way of writing this that makes it clear that
this gives~\eqref{eq:hk} under the specialisation $q^\tau=q^t$.

The dimension formula that gives~\eqref{dimVx} in the classical limit
and~\eqref{dimVq} under the specialisation $q^\tau=q^t$ is:
\begin{equation}
\delta = -\frac{[t+\tau-\alpha][t+\tau-\beta][t+\tau-\gamma]}{[\alpha][\beta][\gamma]}%
\frac{[2t][\tau]}{[t][2\tau]}
\end{equation}

The dimension formula that gives~\eqref{dimVx} in the classical limit
and~\eqref{dimVq} under the specialisation $q^\tau=q^t$ is:
\begin{multline}
\dim_q\fg =  -\frac{[t+\tau-\alpha][t+\tau-\beta][t+\tau-\gamma]}{[\alpha][\beta][\gamma]} \\
\times \frac{[2t-2\alpha]}{[t-\alpha]}\, \frac{[2t-2\beta]}{[t-\beta]} \, \frac{[2t-2\gamma]}{[t-\gamma]} \\
\times \frac{[\tau-\alpha]}{[2\tau-2\alpha]}\, \frac{[\tau-\beta]}{[2\tau-2\beta]}\, \frac{[\tau-\gamma]}{[2\tau-2\gamma]} \\
\times \frac{[\tau+t][4\tau-2t][\tau]}{[2\tau][2\tau-t][2\tau]}
\end{multline}
The dimension formula that gives~\eqref{dimXx} in the classical limit
and~\eqref{dimXq} under the specialisation $q^\tau=q^t$ is:
\begin{multline}
\dim_q X =
\frac{[\tau+\alpha][\tau+\beta][\tau+\gamma]}{[\alpha][\beta][\gamma]} \\
\times \frac{[t+\tau+\alpha][t+\tau+\beta][t+\tau+\gamma]}{[2\alpha][2\beta][2\gamma]} \\
\times \frac{[2t-2\alpha]}{[t-\alpha]}\, \frac{[2t-2\beta]}{[t-\beta]} \, \frac{[2t-2\gamma]}{[t-\gamma]}
\times \frac{[\tau][2t][\tau+t]}{[2\tau][t][2\tau]}
\end{multline}
or, equivalently,
\begin{multline}
	\dim_q X =
	-\delta\frac{[\tau+\alpha][\tau+\beta][\tau+\gamma]}{[2\alpha][2\beta][2\gamma]} \\
	\times \frac{[2t-2\alpha]}{[t-\alpha]}\, \frac{[2t-2\beta]}{[t-\beta]} \, \frac{[2t-2\gamma]}{[t-\gamma]}
	\times \frac{[\tau+t]}{[2\tau]}
\end{multline}
The dimension formula that gives~\eqref{dimYx} in the classical limit
and~\eqref{dimYq} under the specialisation $q^\tau=q^t$ is:
\begin{multline}
\dim_q Y(\alpha)=\\ \frac{[\tau+t][\tau+\beta][\tau+\gamma][\tau+t-\beta][\tau+t-\gamma][\tau+t-3\alpha]}{[2\alpha][\alpha][\beta][\gamma][\alpha-\beta][\alpha-\gamma]} \\
\times \frac{[2t-2\alpha][\tau-\alpha]}{[t-\alpha][2\tau-2\alpha]}
 \frac{[2t][\tau]}{[t][2\tau]}	
\end{multline}

The relation that replaces \eqref{eq:sq} is given in Figure~\ref{fig:sqx}.
Note the coefficient $c_0$ is 0 both in the classical limit and under the specialisation $q^\tau=q^t$.
\begin{figure}
\begin{multline}
\begin{tikzpicture}
\fill[fill=blue!30] (0,0) circle (1cm);
\draw[vector] (45:0.5) -- (45:1);
\draw[vector] (135:0.5) -- (135:1);
\draw[vector] (225:0.5) -- (225:1);
\draw[vector] (315:0.5) -- (315:1);
\draw[vector] (45:0.5) -- (135:0.5);
\draw[adjoint] (135:0.5) -- (225:0.5);
\draw[vector] (225:0.5) -- (315:0.5);
\draw[adjoint] (315:0.5) --(45:0.5);
\end{tikzpicture}
\raisebox{0.9cm}{\,$-$\,}
\begin{tikzpicture}
\fill[fill=blue!30] (0,0) circle (1cm);
\draw[vector] (45:0.5) -- (45:1);
\draw[vector] (135:0.5) -- (135:1);
\draw[vector] (225:0.5) -- (225:1);
\draw[vector] (315:0.5) -- (315:1);
\draw[adjoint] (45:0.5) -- (135:0.5);
\draw[vector] (135:0.5) -- (225:0.5);
\draw[adjoint] (225:0.5) -- (315:0.5);
\draw[vector] (315:0.5) --(45:0.5);
\end{tikzpicture}
\raisebox{0.9cm}{\,$=$} \\
\raisebox{0.9cm}{$c_0\Bigg($\,}
\begin{tikzpicture}[red,line width=2pt]
\fill[blue!30] (0,0) circle (1cm);
\draw (45:1) .. controls (45:0.5) and (315:0.5) .. (315:1);
\draw (135:1) .. controls (135:0.5) and (225:0.5) .. (225:1);
\end{tikzpicture}
\raisebox{0.9cm}{\,$-$\,}
\begin{tikzpicture}[red,line width=2pt]
\fill[blue!30] (0,0) circle (1cm);
\draw (45:1) .. controls (45:0.5) and (135:0.5) .. (135:1);
\draw (315:1) .. controls (315:0.5) and (225:0.5) .. (225:1);
\end{tikzpicture}
\raisebox{0.9cm}{\,$\Bigg)$} \\
\raisebox{0.9cm}{$+c_1\Bigg($\,}
\begin{tikzpicture}
\fill[fill=blue!30] (0,0) circle (1cm);
\draw[vector]  (0.2,0) --  (45:1);
\draw[vector] (-0.2,0) -- (135:1);
\draw[vector] (-0.2,0) -- (225:1);
\draw[vector]  (0.2,0) -- (315:1);
\draw[adjoint] (-0.2,0) -- (0.2,0);
\end{tikzpicture}
\raisebox{0.9cm}{\,$-$\,}
\begin{tikzpicture}
\fill[fill=blue!30] (0,0) circle (1cm);
\draw[vector] (0,0.2) -- (45:1);
\draw[vector] (0,0.2) -- (135:1);
\draw[vector] (0,-0.2) -- (225:1);
\draw[vector] (0,-0.2) -- (315:1);
\draw[adjoint] (0,-0.2) -- (0,0.2);
\end{tikzpicture}
\raisebox{0.9cm}{\,$\Bigg)$}
\end{multline}
where the coefficient $c_0$ is given by
\begin{equation*}
[\alpha][\beta][\gamma]
\frac{[\alpha+\beta]}{[2\alpha+2\beta]}
\frac{[\alpha+\gamma]}{[2\alpha+2\gamma]}
\frac{[\beta+\gamma]}{[2\beta+2\gamma]}
\frac{[2\tau]}{[\tau]}
\frac{[\tau-t]}{[\tau+t]^2}
\end{equation*}
and the coefficient $c_1$ can be found by confluence.
\caption{Square relation}\label{fig:sqx}
\end{figure}

The skein relation for the unoriented, framed braid group generators is given in Figure~\ref{skeinx}.
\begin{figure}\begin{multline*}
	\begin{tikzpicture}[red,line width=2pt]
	\fill[blue!30] (0,0) circle (1cm);
	\draw (45:1) -- (225:1);
	\fill[blue!30] (0,0) circle (0.25cm);
	\draw (135:1) -- (315:1);
	\end{tikzpicture}
	\raisebox{0.9cm}{\,$-$\,}
	\begin{tikzpicture}[red,line width=2pt]
	\fill[blue!30] (0,0) circle (1cm);
	\draw (135:1) -- (315:1);
	\fill[blue!30] (0,0) circle (0.25cm);
	\draw (45:1) -- (225:1);
	\end{tikzpicture}
	\raisebox{0.9cm}{\,$=$} \\
	\raisebox{0.9cm}{$[\alpha][\beta][\gamma]\frac{[t]}{[2t]}\,\Bigg($\,}
	\begin{tikzpicture}[red,line width=2pt]
	\fill[blue!30] (0,0) circle (1cm);
	\draw (45:1) .. controls (45:0.5) and (315:0.5) .. (315:1);
	\draw (135:1) .. controls (135:0.5) and (225:0.5) .. (225:1);
	\end{tikzpicture}
	\raisebox{0.9cm}{\,$-$\,}
	\begin{tikzpicture}[red,line width=2pt]
	\fill[blue!30] (0,0) circle (1cm);
	\draw (45:1) .. controls (45:0.5) and (135:0.5) .. (135:1);
	\draw (315:1) .. controls (315:0.5) and (225:0.5) .. (225:1);
	\end{tikzpicture}
	\raisebox{0.9cm}{\,$\Bigg)$} \\
\raisebox{0.9cm}{$+\frac{[2\alpha+2\beta]}{[\alpha+\beta]}
	\frac{[2\alpha+2\gamma]}{[\alpha+\gamma]}
	\frac{[2\beta+2\gamma]}{[\beta+\gamma]}
	\frac{[\tau]}{[2\tau]}[\tau+t]$\,$\Bigg($\,}
\begin{tikzpicture}
\fill[fill=blue!30] (0,0) circle (1cm);
\draw[vector]  (0.2,0) --  (45:1);
\draw[vector] (-0.2,0) -- (135:1);
\draw[vector] (-0.2,0) -- (225:1);
\draw[vector]  (0.2,0) -- (315:1);
\draw[adjoint] (-0.2,0) -- (0.2,0);
\end{tikzpicture}
\raisebox{0.9cm}{\,$-$\,}
\begin{tikzpicture}
\fill[fill=blue!30] (0,0) circle (1cm);
\draw[vector] (0,0.2) -- (45:1);
\draw[vector] (0,0.2) -- (135:1);
\draw[vector] (0,-0.2) -- (225:1);
\draw[vector] (0,-0.2) -- (315:1);
\draw[adjoint] (0,-0.2) -- (0,0.2);
\end{tikzpicture}
\raisebox{0.9cm}{\,$\Bigg)$}
	\end{multline*}
	\caption{Skein relation}\label{skeinx}
\end{figure}

\subsection{Knot invariants}
The quantised extended Vogel plane is a ribbon category and so each object
gives rise to an invariant of framed links with values in $\End(I)$
by the coherence theorem in \cite{MR1268782}. 
In this section we show that this link invariant can be computed
for the closures of two string braids. 

The formula for the invariant of the closure of $\sigma^k$ is given in terms of the quantum dimensions and the eigenvalues in Table~\ref{table:brqx} by the following expression:
\begin{multline}
	(-1)^k\left(\dim_q\fg\, q^{2kt}+\dim_q X\right) \\
	+q^{2k(\tau+t)}+\dim_q(Y(\alpha))\, q^{2k\alpha}+\dim_q(Y(\beta))\, q^{2k\beta}
	+\dim_q(Y(\gamma))\, q^{2k\gamma} \\
\end{multline}
This invariant is multiplicative under disjoint union of links.
It is more convenient to divide by $\delta$ to get an invariant
of non-empty links which is multiplicative under connected sum.

This is an invariant of framed links. The invariant of oriented links
is given by multiplying by $q^{2(\tau+t)\mathrm{writhe}}$. The writhe of
the closure of $\sigma^k$ is $k$.

\begin{ex}
	For the Hopf link the invariant is given in Figure~\ref{fig:hopfx}.
\end{ex}
\begin{figure}
	\begin{multline*} 
		[\tau+t]
		\left([\tau+\alpha+\beta-\gamma]+[\tau+\alpha-\beta+\gamma]+[\tau-\alpha+\beta+\gamma]\right) \\
		-\frac{[\tau+\alpha+\beta][\tau+\alpha+\gamma][\tau+\beta+\gamma]}%
	{[\alpha][\beta][\gamma]}
		\frac{[2t]}{[t]}\frac{[\tau]}{[2\tau]}
	\end{multline*}
	\caption{Extended invariant of Hopf link}\label{fig:hopfx}
\end{figure}

\begin{ex}
	For the trefoil the invariant is the Laurent polynomial is given in Figure~\ref{fig:trefoilx}.
\end{ex}
\begin{figure}
	\begin{gather*} 
		-q^{4\tau+4\alpha+4\beta+4\gamma}	\\
		+q^{4\tau+4\alpha+4\beta+2\gamma}
		+q^{4\tau+4\alpha+2\beta+4\gamma}
		+q^{4\tau+2\alpha+4\beta+4\gamma} \\
		-q^{4\tau+4\alpha+2\beta+2\gamma}
		-q^{4\tau+2\alpha+4\beta+2\gamma}
		-q^{4\tau+2\alpha+2\beta+4\gamma} \\
		-q^{2\tau+4\alpha+4\beta+4\gamma}
		+q^{4\tau+2\alpha+2\beta+2\gamma} \\
		-q^{4\tau+2\alpha+2\beta}
		-q^{4\tau+2\alpha+2\gamma}
		-q^{4\tau+2\beta+2\gamma} \\
		+q^{2\tau+4\alpha+4\beta+2\gamma}
		+q^{2\tau+4\alpha+2\beta+4\gamma}
		+q^{2\tau+2\alpha+4\beta+4\gamma} \\		
		-q^{2\tau+4\alpha+2\beta+2\gamma}
		-q^{2\tau+2\alpha+4\beta+2\gamma}
		-q^{2\tau+2\alpha+2\beta+4\gamma} \\
		+q^{2\tau+4\alpha+2\beta}
		+q^{2\tau+4\alpha+2\gamma}
		+q^{2\tau+2\alpha+4\beta}
		+q^{2\tau+2\alpha+4\gamma}
		+q^{2\tau+4\beta+2\gamma}
		+q^{2\tau+2\beta+4\gamma} \\
		+4q^{2\tau+2\alpha+2\beta+2\gamma}
		-q^{4\tau} \\
		-q^{2\tau+2\alpha+2\beta}
		-q^{2\tau+2\alpha+2\gamma}
		-q^{2\tau+2\beta+2\gamma} \\
		+2q^{2\tau+2\alpha}
		+2q^{2\tau+2\beta}
		+2q^{2\tau+2\gamma} \\
		-q^{4\alpha+2\beta+2\gamma}
		-q^{2\alpha+4\beta+2\gamma}
		-q^{2\alpha+2\beta+4\gamma} \\		
		-q^{4\alpha}
		-q^{4\beta}
		-q^{4\gamma} \\
		-q^{2\tau}+q^{2\alpha+2\beta+2\gamma} \\
		-2q^{2\alpha+2\beta}
		-2q^{2\alpha+2\gamma}
		-2q^{2\beta+2\gamma} \\
		+q^{2\tau-2\alpha}
		+q^{2\tau-2\beta}
		+q^{2\tau-2\gamma} \\				
		+q^{2\alpha}
		+q^{2\beta}
		+q^{2\gamma} \\		
		+q^{2\alpha-2\beta}
		+q^{2\alpha-2\gamma}
		+q^{2\beta-2\gamma}
		+q^{-2\alpha+2\beta}
		+q^{-2\alpha+2\gamma}
		+q^{-2\beta+2\gamma} \\		
		-3+q^{2\tau-2\alpha-2\beta-2\gamma}\\
		+q^{-2\tau+2\alpha}
		+q^{-2\tau+2\beta}
		+q^{-2\tau+2\gamma} \\
		-q^{-2\alpha-2\beta}
		-q^{-2\alpha-2\gamma}
		-q^{-2\beta-2\gamma}\\	
		+q^{-2\tau-2\alpha-2\beta+2\gamma}
		+q^{-2\tau-2\alpha+2\beta-2\gamma}
		+q^{-2\tau+2\alpha-2\beta-2\gamma} \\		
		+q^{-2\tau-2\alpha}
		+q^{-2\tau-2\beta}
		+q^{-2\tau-2\gamma} \\
		+q^{-2\tau-2\alpha-2\beta-2\gamma}
	\end{gather*}
	\caption{Extended invariant of trefoil}\label{fig:trefoilx}
\end{figure}

\clearpage

\printbibliography
\end{document}